\newtheorem{teo}{Theorem}[section] 
\newtheorem*{teo*}{Theorem}
\newtheorem{lem}[teo]{Lemma} 
\newtheorem{prop}[teo]{Proposition}
\newtheorem{definicion}{Definition}[section]
\newtheorem{remark}{Remark}[section]
\newcommand{\mc}{\mathcal}
\newcommand{\R}{\mathbb{R}}\newcommand{\Rn}{\R^n}\newcommand{\Rnn}{\R^{n\times n}}
\newcommand{\N}{\mathbb{N}}
\renewcommand{\a}{\alpha}
\renewcommand{\O}{\Omega}
\newcommand{\e}{\varepsilon}
\newcommand{\g}{\gamma}
\renewcommand{\d}{\delta}
\renewcommand{\t}{\theta}
\newcommand{\p}{\partial}
\newcommand{\f}{\varphi}
\newcommand{\weakc}{\rightharpoonup}
\newcommand{\dd}{\mathrm{d}}
\DeclareMathOperator{\pv}{pv}
\renewcommand{\div}{\operatorname{div}}
\DeclareMathOperator{\cof}{cof}
\DeclareMathOperator{\loc}{loc}
\DeclareMathOperator{\Div}{Div}
\DeclareMathOperator{\supp}{supp}
\DeclareMathOperator{\diver}{div}
\DeclareMathOperator{\Diver}{Div}
\begin{document}

\begin{frontmatter}
\title{Fractional Piola identity and polyconvexity in fractional spaces}

\author[ucmladdress]{Jos\'e C. Bellido\corref{mycorrespondingauthor}}
\cortext[mycorrespondingauthor]{Corresponding author}
\ead{JoseCarlos.Bellido@uclm.es}

\author[ucmladdress]{Javier Cueto}
\ead{Javier.Cueto@uclm.es}

\author[uamaddress]{Carlos Mora-Corral}
\ead{Carlos.Mora@uam.es}

\address[uamaddress]{Department of Mathematics, Universidad Aut\'onoma de Madrid, Cantoblanco, 28.049-Madrid}

\address[ucmladdress]{E.T.S.I.\ Industriales, Department of Mathematics,
Universidad de Castilla-La Mancha, 13.071-Ciudad Real, Spain}

\begin{abstract}
In this paper we address nonlocal vector variational principles obtained by substitution of the classical gradient by the Riesz fractional gradient.
We show the existence of minimizers in Bessel fractional spaces under the main assumption of polyconvexity of the energy density, and, as a consequence, the existence of solutions to the associated Euler--Lagrange system of nonlinear fractional PDE. 
The main ingredient is the fractional Piola identity, which establishes that the fractional divergence of the cofactor matrix of the fractional gradient vanishes.
This identity implies the weak convergence of the determinant of the fractional gradient, and, in turn, the existence of minimizers of the nonlocal energy. Contrary to local problems in nonlinear elasticity, this existence result is compatible with solutions presenting discontinuities at points and along hypersurfaces. 
\end{abstract}

\begin{keyword}
Nonlocal variational problems \sep  Riesz fractional gradient \sep Fractional Piola identity \sep Polyconvexity
\MSC[2010] 35Q74  \sep 35R11 \sep 49J45

\end{keyword}

\end{frontmatter}



\section[Introduction]{Introduction}

In the last years there has been a renewed interest in variational problems involving the so-called {\it Riesz $s$-fractional gradient}, which for a function $u\in C_c^\infty(\Rn)$ is defined as 
\begin{equation}\label{eq:Ds}
 D^s u(x)=c_{n,s} \pv_x \int_{\Rn} \frac{u(x)-u(y)}{|x-y|^{n+s}}\frac{x-y}{|x-y|}\,dy, \qquad x \in \Rn ,
\end{equation} 
where $\pv_x$ stands for the principal value centered at $x$, and $c_{n,s}$ is a suitable constant. In the recent references \cite{ShS2015,ShS2018}, variational principles for functionals depending on this fractional gradient are addressed, as well as the fractional PDE derived from those as equilibrium equations. The authors consider typical calculus of variations problems, with standard growth conditions in which the classical (local) gradient is substituted by $D^su$. This naturally leads to the consideration of the space 
\[H^{s,p}(\Rn)=\left\{ u\in L^p(\Rn) \; :\; D^s u\in L^p (\R^n, \R^{n})\right\} . \]
It is also of interest the affine subspace of functions verifying a \emph{complement value condition}; to be precise, given $g\in H^{s,p} (\Rn)$ and a bounded domain $\O\subset \Rn$, we consider
\[H_g^{s,p}(\O)=\left\{u\in H^{s,p}(\Rn)\;:\; u=g\text{ in }\O^c\right\}, \]
where $\O^c$ stands for the complement of $\O$ in $\Rn$.
References \cite{ShS2015,ShS2018} deal with integral functionals of the form 
\begin{equation} \label{nlfunc}
I(u)=\int_{\Rn} W(x,u(x) ,D^su(x))\,dx
\end{equation}
with $p$-growth conditions, and prove the existence of minimizers in $H_g^{s,p}$ under the fundamental hypothesis of convexity of $W$ in the last variable, as well as natural coercivity conditions. Taking advantage of the fractional framework and the properties of Riesz potentials and Fourier transform, they also show some remarkable results on the functional spaces $H^{s,p}$, including a fractional Sobolev-type inequality, the compact embedding into $L^p$ and the equivalence with Bessel spaces (see \cite{Adams,Stein70,RunSi}). The book \cite[Ch.\ 15]{ponce_book} also pays attention to the $s$-fractional gradient, providing a proof not based on Fourier transform of a fractional fundamental theorem of calculus, also proved in \cite{ShS2015}, which is used for showing a Sobolev type inequality from $H^{s,p}$ to $L^p$. Another reference also dealing with the fractional gradient in the case $p=1$ is \cite{Schikorra2017}. 

Even more recent references are \cite{COMI2019,Silhavy2019}. In \cite{Silhavy2019}, the $s$-fractional gradient together with the $s$-fractional divergence are studied in a systematic manner. Several important properties are given, such as the uniqueness up to a multiplicative constant of the fractional gradient under natural requirements (invariance under translations and rotations, homogeneity under dilations and some continuity properties in an appropriate functional space), and some {\it fractional calculus} rules. The results in \cite{Silhavy2019} established, both from a mathematical and physical perspective, what was pointed out earlier in \cite{ShS2015,ShS2018,ponce_book}, namely, that the $s$-fractional gradient is the natural definition for a fractional differential object. We agree with the previous references on the claim that the $s$-fractional gradient deserves more attention in the literature, and likely there will be both more theoretical studies and applications in different contexts. In \cite{COMI2019}, it is addressed the space of functions $u$ whose total fractional variation is finite, naturally leading to the definition of the space $BV^s(\Rn)$ and to a $s$-fractional Caccioppoli perimeter concept. Several interesting results are shown, including a continuous embedding of fractional Sobolev spaces into $BV^s$, a Sobolev-type inequality, a coarea formula, a $s$-fractional isoperimetric inequality and a natural $s$-fractional analogue of De Giorgi's notion of reduced boundary.

The definition of the fractional gradient \eqref{eq:Ds} has an important drawback when thinking about certain applications, for instance, in nonlocal solid mechanics, since it requires an integration in the whole space for the computation of the gradient.
This is somehow meaningless in those contexts and, in addition, makes it difficult the extension to more realistic boundary conditions. In \cite{MeS}, a definition of a nonlocal gradient in bounded or unbounded domains, for which \eqref{eq:Ds} is a particular case, is introduced. Moreover, the localization of this nonlocal gradient when the nonlocality parameter ({\it horizon}) $\delta$ goes to zero is analyzed, showing the convergence to the local gradient in different topologies. For a domain $\O\subset \R^n$, the nonlocal gradient is defined as
\begin{equation}\label{eq:nonlocal_gradient}
D_{\O,\d} u(x)= \pv_x \int_\O \frac{u(x)-u(y)}{|x-y|}\frac{x-y}{|x-y|}\rho_\delta(x-y)\, dy,
\end{equation}
where the function $\rho_\delta$, typically radial, is an integral kernel reflecting the forces of interaction exerted by particles separated by a positive distance smaller than $\delta$. Typically $\rho_\delta$ vanishes outside the ball of radius $\delta$ centered at the origin. The oldest references we are aware of regarding nonlocal gradients of this kind used for models in continuum mechanics are \cite{EdelenLaws1971,EdelenGreenLaws1971}, where a nonlocal model of elasticity was proposed. More recently, a nonlocal alternative theory in solid mechanics, named peridynamics, has been proposed in \cite{SaS,SILLING201073}. The development of peridynamics in the last years has been impressive; however, most of the work until now concerns linear elastic models. In \cite{MeD, MeD16}, a linear elasticity model in the context of peridynamics is rigorously studied, proving the existence and uniqueness of solutions and their convergence to the local Lam\'e system of linear elasticity as the horizon goes to zero. In these references a projected version of nonlocal gradient \eqref{eq:nonlocal_gradient} is used in the context of the so-called state based peridynamic model. Another interesting approach going from a nonlocal peridynamic framework with finite horizon to a nonlocal fractional situation with infinite horizon in the linear case, appears in the recent references \cite{Men-fractional-1,Men-fractional-2,Men-fractional-3}. It is also worth mentioning \cite{EvBe}, where both the ill- and well-posedness of the classical Eringen model of nonlocal elasticity are addressed.  

In this investigation we deepen in the existence issue for vector variational problems involving the $s$-fractional gradient, as well as the PDE derived from those as equilibrium conditions. Thus, we consider the more difficult vectorial case under conditions weaker than convexity. To be precise, we establish the existence of minimizers in $H^{s,p}$ under the polyconvexity assumption of the integrand. Polyconvexity is a central notion in calculus of variations, with essential implications on the existence and stability of solutions in solid mechanics, and particularly in elasticity \cite{Ball77, dacorogna}. In order to obtain our results, we follow the usual steps as for classical polyconvex variational problems, namely, we show that the determinant (or any minor) of the fractional gradient matrix $D^s u$ is continuous with respect to weak convergence in $H^{s,p}$. Similarly to the classical case, we need a fractional version of the Piola identity, highlighting this as the key ingredient and the most remarkable contribution of this work. In this new situation we follow \cite{MeS} to define a fractional divergence and establish an integration by parts formula (see also \cite{ponce_book, Silhavy2019}).
We adapt the techniques developed there for some spaces of nonlocal type to our $H^{s,p}$ spaces.
More concretely, as in \eqref{eq:Ds}, the Riesz $s$-fractional gradient of a vector field $u : \Rn \to \Rn$ is 
\[ 
 D^s u(x)=c_{n,s} \pv_x \int_{\Rn} \frac{u(x)-u(y)} {|x-y|^{n+s}} \otimes \frac{x-y}{|x-y|}\,dy, \qquad x \in \Rn ,\]
where $\otimes$ denotes the tensor product, and the Riesz $s$-fractional divergence of a vector field $\mathbf{\psi} : \Rn \to \Rn$ is defined as
\[\diver^s \mathbf{\psi} (x) =-c_{n,s} \pv_x \int_{\Rn} \frac{\mathbf{\psi}(x)+\mathbf{\psi}(y)}{|x-y|^{n+s}}\cdot \frac{x-y}{|x-y|}\,dy , \qquad x \in \Rn . \]
As mentioned above, we prove the fractional Piola identity $\Div^s \cof D^s u = 0$ (where $\Div^s$ means the $s$-divergence by rows), then we show the weak continuity of $\det D^su$, the weak lower semicontinuity of polyconvex functionals in $H^{s,p}$, and finally we settle the existence of minimizers for \eqref{nlfunc}. We believe that the fractional Piola identity is a result of interest in itself. On the one hand, it may serve to show analogous versions in the fractional or nonlocal situations of classical results in whose proof the Piola identity is invoked, as for instance, the change of variables formula for surface integrals. On the other hand, it may also be useful in other fractional or nonlocal models in different contexts, such as fluid mechanics \cite{DuTian}. Furthermore, an extension to a nonlocal Piola identity for nonlocal gradients defined on bounded domains (see \eqref{eq:nonlocal_gradient}) is easy from the proof we provide here in the fractional framework. 

In the last decade there has been a great deal of work on fractional PDE of elliptic type involving the fractional Laplacian in some way.
Our results here enlarge this theory by giving an existence result of minimizers of nonlinear fractional vector variational problems based on polyconvexity, which implies, in turn, an existence result of solutions to nonlinear fractional PDE systems. The amount of references on nonlocal equations and fractional Laplacian is overwhelming, so for situations related to this work we just cite the survey \cite{Ros-Oton}, the paper \cite{ShS2018} and the references therein. On the other hand, we would like to point out the relationship of our study with nonlocal elasticity and peridynamics.
As mentioned above, the variational principle considered in this paper is not an appropriate model in solid mechanics, but a version in bounded domains of the functional \eqref{nlfunc} involving the nonlocal gradient \eqref{eq:nonlocal_gradient}, satisfying additional requirements in order to be physically consistent, fits into the peridynamics state-based model for large deformations \cite{SILLING201073}. Whereas in $H^{s,p}(\Rn)$, the structural functional analysis facts necessary to prove an existence theory for functionals like \eqref{nlfunc} are known (continuous and compact embeddings into $L^p$), those are still unknown for an analogous version of this space in bounded domains. In this sense, and since the proof provided for the fractional Piola identity may be adapted in a more or less straightforward way to bounded domains, we think this study may be seen as a first step towards a rigorous mathematical theory of nonlocal hyperelasticity. Furthermore, one primary interest for us is that $H^{s,p}$ is larger than $W^{1,p}$, and functions in $H^{s,p}$ may exhibit singularities prohibited in $W^{1,p}$, as we point out in Section \ref{se:functional}. We would like to emphasize that, contrary to classical elasticity, both singularities along points (cavitation) and hypersurfaces are compatible with the existence of solutions in $H^{s,p}$ (see Theorem \ref{th:existence}). This seems to indicate that the $L^p$ norm of $D^s u$ not only contributes to the elastic energy, but also to a kind of surface energy, since the latter is necessary in the modelling of such singularities (see, e.g., \cite{Ball01,MuSp,DaFrTo05,HeMo10}).

The outline of the paper is the following.
Section \ref{se:functional} introduces the functional space of fractional type $H^{s,p}(\Rn)$ and its main properties. We also include examples of functions exhibiting singularities belonging to these spaces but not to Sobolev spaces.
Section \ref{se:preliminaries} contains some calculus facts in $H^{s,p}$, such as the formulas for the fractional derivative of a product and the fractional integration by parts.
In Section \ref{se:Piola} we prove the fractional Piola identity. 
Section \ref{se:weak} shows the weak continuity of minors in $H^{s,p}$.
Finally, in Section \ref{se:existence} we prove the existence of minimizers of \eqref{nlfunc} for polyconvex integrands, and obtain the associated Euler--Lagrange system of fractional PDE.

\section[Functional analysis framework]{Functional analysis framework}\label{se:functional}

This section introduces general properties of the functional space $H^{s,p}$.
We start by setting the definition of principal value.
Given a function $f : \Rn \to \R$ and $x \in \Rn$ such that $f \in L^1 (B(x,r)^c)$ for every $r>0$, we define the principal value centered at $x$ of $\int_{\Rn} f$, denoted by
\[
 \pv_{x} \int_{\mathbb{R}^n}f \quad \text{or} \quad \pv_{x}\int f ,
\]
as
\begin{equation*}
 \lim_{r \rightarrow 0} \int_{B(x,r)^c} f,
\end{equation*}
whenever this limit exists.
We have denoted by $B(x,r)$ the open ball centered at $x$ of radius $r$, and by $B(x,r)^c$ its complement.
As most integrals in this work are over $\Rn$, we will use the symbol $\int$ as a substitute for $\int_{\Rn}$.

The $s$-fractional gradient is defined as follows.

\begin{definicion}\label{de:Ds}
Let $u : \Rn \to \R$ be a measurable function.
Let $0<s<1$ and $x \in \Rn$ be such that
\begin{equation}\label{eq:uufinite}
 \int_{B(x, r)^c} \frac{|u(x)-u(y)|}{|x-y|^{n+s}} dy < \infty 
\end{equation}
for each $r>0$.
Set
\begin{equation*}
 c_{n,s}= - (n+s-1) \frac{\Gamma \left(\frac{n+s-1}{2} \right)}{\pi^{\frac{n}{2}} \, 2^{1-s} \, \Gamma \left(\frac{1-s}{2}\right)} ,
\end{equation*}
where $\Gamma$ is the Euler gamma function.
We define $D^s u (x)$, the $s$-fractional gradient of $u$ at $x$, as
\begin{equation*}
D^s u(x):=c_{n,s} \pv_{x}\int \frac{u(x)-u(y)}{|x-y|^{n+s}}\frac{x-y}{|x-y|}dy ,
\end{equation*}
whenever the principal value exists.
\end{definicion}
We note that, due to symmetry, 
\[
 \pv_{x} \int \frac{x-y}{|x-y|^{n+s+1}}dy=0,
\]
and consequently, the equality
\begin{equation}\label{Alternative gradient def}
   D^su(x) = -c_{n,s} \, \pv_{x}\int \frac{u(y)}{|x-y|^{n+s}}\frac{x-y}{|x-y|}dy
\end{equation}
holds.
Notice also that the constant $c_{n,s}$ is negative.

Definition \ref{de:Ds} naturally extends to vector fields. Given $u : \Rn \to \R^m$ measurable such that \eqref{eq:uufinite} holds for each $r>0$, its $s$-fractional gradient is 
\begin{displaymath}
D^su(x)=c_{n,s} \pv_{x} \int \frac{u(x)-u(y)}{|x-y|^{n+s}}\otimes\frac{x-y}{|x-y|}dy,
\end{displaymath}
whenever it exists.
Here, $\otimes$ stands for the tensor product of vectors. 
Given $s\in(0,1)$ and $p\in (1,\infty)$, we define the space $H^{s,p}$ as
\begin{displaymath}
 H^{s,p}(\mathbb{R}^n, \mathbb{R}^m):=\{ u \in  L^p(\mathbb{R}^n): D^su \in L^p(\mathbb{R}^n, \mathbb{R}^{n \times m}) \},
\end{displaymath}
and we denote $H^{s,p}(\Rn)=H^{s,p}(\Rn,\R)$.
We will study \emph{complement value problems} (as in, for example, \cite{FKV}), so we are interested in the case in which functions are prescribed in the complement of a given set.
Thus, given an open set $\O\subset \Rn$ and $g\in H^{s,p}(\Rn,\mathbb{R}^m)$, the space $H^{s,p}_g(\O,\mathbb{R}^m)$ is defined as 
\begin{equation}\label{eq:Hg}
H_{g}^{s,p}(\Omega, \mathbb{R}^m):=\{ u \in H^{s,p}(\mathbb{R}^n, \mathbb{R}^m): u= g \text{ in } \Omega^c \}.
\end{equation}

The space $H^{s,p}$, together with the $s$-fractional gradient as a mathematical object, was introduced and studied in \cite{ShS2015,ShS2018} (see also \cite[Sect.\ 15.2]{ponce_book}).
The first remarkable fact is the identification of $H^{s,p}$ with the classical Bessel potential spaces (see \cite{Adams, Stein70, RunSi}) established in \cite[Th.\ 1.7]{ShS2015}.
Thanks to this equivalence, and rewriting well-known properties for Bessel spaces in terms of $H^{s,p}$ spaces, we obtain several basic properties that we summarize in the following proposition (see \cite[Ch.\ 7, p.\ 221]{Adams}).
We denote by $\hookrightarrow$ continuous inclusion.

\begin{prop} \label{Theorem properties H^{s,p}}
Set $0<s<1$ and $1<p<\infty$. Then:
\begin{enumerate}[a)]
\item $C_{c}^{\infty}(\mathbb{R}^n)$ is dense in $H^{s,p}(\mathbb{R}^n)$. 

\item
$H^{s,p}(\mathbb{R}^n)$ is reflexive.

\item\label{item:Hspembed} If $s< t< 1$ and $1<q\leq p\leq \frac{nq}{n-(t-s)q}$, then $H^{t,q}(\mathbb{R}^n) \hookrightarrow H^{s,p}(\mathbb{R}^n)$.

\item\label{item:HspMorrey} If $0 < \mu \leq s-\frac{n}{p}$, then $H^{s,p}(\mathbb{R}^n)\hookrightarrow C^{0,\mu}(\mathbb{R}^n)$.

\item If $p=2$, then $H^{s,2}(\mathbb{R}^n)=W^{s,2}(\mathbb{R}^n)$ with equivalence of norms.

\item\label{item:Hspfractional} If $0<s_1<s<s_2 < 1$ then $H^{s_2,p}(\mathbb{R}^n) \hookrightarrow W^{s,p}(\mathbb{R}^n) \hookrightarrow H^{s_1,p}(\mathbb{R}^n)$.
\end{enumerate}
\end{prop}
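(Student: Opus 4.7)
The plan is to reduce every assertion to a classical result about Bessel potential spaces $L^{s,p}(\Rn)$ by means of the identification $H^{s,p}(\Rn) = L^{s,p}(\Rn)$ (with equivalent norms) established in \cite[Thm.~1.7]{ShS2015}. Under this identification the Bessel potential $(I - \Delta)^{-s/2}$ is an isometric isomorphism $L^p(\Rn) \to L^{s,p}(\Rn)$, so structural and embedding properties of $L^p$ and of $L^{s,p}$ transfer directly to $H^{s,p}$; once this bridge is in place, the proposition becomes a catalogue of well-known facts.

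For (a) and (b), reflexivity of $L^p(\Rn)$ and density of $C_c^\infty(\Rn)$ in $L^p(\Rn)$ pass through the isomorphism to yield reflexivity of $L^{s,p}$ and density of the image of $C_c^\infty$. To recover density of $C_c^\infty$ itself, I would use that the Schwartz class $\mathcal{S}(\Rn)$ is invariant under $(I-\Delta)^{-s/2}$ and dense in $L^{s,p}$, and then approximate Schwartz functions by compactly supported smooth functions via the standard truncate-and-mollify argument, controlled in the $L^{s,p}$ norm by the same Fourier-multiplier identification.

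For (c)--(f), each statement is an embedding between Bessel or Slobodeckij spaces that I would quote from \cite[Ch.~7, p.~221]{Adams}: assertion (c) is the Sobolev embedding $L^{t,q} \hookrightarrow L^{s,p}$ encoding the relation $t - n/q \geq s - n/p$, into which the given hypothesis $p \leq nq/(n - (t-s)q)$ rearranges; (d) is the Morrey embedding $L^{s,p} \hookrightarrow C^{0,\mu}$ valid whenever $0 < \mu \leq s - n/p$; (e) is the classical coincidence $L^{s,2} = W^{s,2}$, both being the Hilbert space $H^s$ defined via Fourier transform; and (f) is the standard sandwich of Slobodeckij spaces between Bessel spaces of slightly larger and smaller smoothness index, available for any $p \in (1,\infty)$.

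There is no genuine obstacle, since after the Shieh--Spector identification every clause is an invocation of a known theorem. The only substantive ingredient is that identification itself, which performs all the heavy lifting by translating the pointwise principal-value definition of $D^s u$ into a Fourier-multiplier description equivalent to $(I-\Delta)^{s/2} u \in L^p$; the bulk of the work in the proposition is thereby outsourced to \cite{ShS2015} and \cite{Adams}.
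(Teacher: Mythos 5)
Your proposal is correct and is essentially the paper's own argument: the authors state explicitly that they invoke the identification of $H^{s,p}$ with Bessel potential spaces from \cite[Th.\ 1.7]{ShS2015} and then read off all six assertions from the standard theory of Bessel spaces, citing \cite[Ch.\ 7, p.\ 221]{Adams}. You have simply made the bookkeeping a bit more explicit (e.g.\ passing through Schwartz functions for density), but the logical route and the sources are the same.
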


We have denoted by $W^{s,p}$ the classical fractional Sobolev space.
Although they will not be used in this paper, they were mentioned in Proposition \ref{Theorem properties H^{s,p}} to help locate the spaces $H^{s,p}$ in a more familiar scale of regularity.
We have also denoted by $C^{0,\mu}$ the space of H\"older continuous functions of exponent $\mu$.

An essential tool for obtaining existence of minimizers for variational functionals is a Poincar\'e-type inequality. Collecting together several theorems present in the literature, we state the following result, which is not optimal but suitable for our purposes.
Henceforth, given $p > 1$ and $0<s<1$ with $sp<n$ we define $p^* := \frac{np}{n-sp}$.

\begin{teo}\label{th:PoincareSobolev}
Set $0<s<1$ and $1<p<\infty$. Let $\O\subset \Rn$ be a bounded open set. Then there exists $C=C(|\O|,n,p,s)>0$ such that
\begin{displaymath}
\lVert u \rVert_{L^q (\O)} \leq C \lVert D^s u \rVert_{L^p(\mathbb{R}^n)}
\end{displaymath}
for all $u \in H^{s,p}(\mathbb{R}^n)$, and any $q$ satisfying
\[
\begin{cases}
 q\in \left[1, p^* \right] & \text{if } sp<n , \\
 q\in[1, \infty) & \text{if } sp=n , \\
 q\in[1,\infty] & \text{if } sp>n .
\end{cases}
\]
\end{teo}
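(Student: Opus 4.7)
The main ingredient is the fractional Sobolev inequality proved in \cite{ShS2015}, which asserts that for $sp<n$ there exists $C=C(n,s,p)>0$ with
\[
\|u\|_{L^{p^*}(\R^n)} \leq C\, \|D^s u\|_{L^p(\R^n)} \qquad \text{for every } u\in H^{s,p}(\R^n).
\]
From this the subcritical range of the statement is immediate. Indeed, if $sp<n$ and $q\in[1,p^*]$, H\"older's inequality on the bounded set $\Omega$ yields
\[
\|u\|_{L^q(\Omega)} \leq |\Omega|^{\frac{1}{q}-\frac{1}{p^*}} \|u\|_{L^{p^*}(\Omega)} \leq |\Omega|^{\frac{1}{q}-\frac{1}{p^*}} \|u\|_{L^{p^*}(\R^n)},
\]
and chaining with the Sobolev inequality above gives the claim with the right dependence of the constant.

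For the remaining cases I would reduce to the previous one by lowering the regularity index. Given any $q<\infty$, I pick $s_0\in(0,s)$ and $p_0\in(1,p]$ with $s_0 p_0<n$ and with the subcritical exponent $p_0^*:=np_0/(n-s_0 p_0)\geq q$; such a choice is always possible because $p_0^*\to\infty$ as the product $s_0 p_0$ approaches $n$ from below, which is compatible with the regime $sp\geq n$. Applying the Sobolev inequality with parameters $(s_0,p_0)$ gives $\|u\|_{L^{p_0^*}(\R^n)}\leq C\,\|D^{s_0}u\|_{L^{p_0}(\R^n)}$, and to control the right-hand side I invoke the Riesz potential identity $D^{s_0}u=I_{s-s_0}D^s u$ (valid for smooth functions and extended by density, see \cite{ShS2015, Silhavy2019}) together with the Hardy--Littlewood--Sobolev inequality, imposing the scaling $\tfrac{1}{p_0}=\tfrac{1}{p}-\tfrac{s-s_0}{n}$. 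A final H\"older step on $\Omega$ then delivers the estimate for the prescribed exponent~$q$.

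The main obstacle is the supercritical endpoint $sp>n$ with $q=\infty$. The Morrey embedding from Proposition~\ref{Theorem properties H^{s,p}}, item (d), only gives $\|u\|_{L^\infty(\R^n)}\leq C\,\|u\|_{H^{s,p}(\R^n)}$, which involves the full norm. To eliminate the $\|u\|_{L^p(\R^n)}$ contribution I would use the fractional fundamental theorem of calculus from \cite{ShS2015}, $u=I_s R\cdot D^s u$ (with $R$ a Riesz-transform-type operator bounded on $L^p$), thereby representing $u$ as a Riesz potential of order $s>n/p$ of an $L^p$ function. Since $s>n/p$ forces the Riesz kernel $|x-y|^{s-n}$ to belong to $L^{p'}$ on every ball of fixed radius, one can then bound $|u(x)|$ uniformly for $x\in\Omega$ by a constant multiple of $\|D^s u\|_{L^p(\R^n)}$ depending on $\diam(\Omega)$ and hence on $|\Omega|$. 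This pointwise representation step is the delicate part of the argument.
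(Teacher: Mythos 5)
Your treatment of the subcritical case $sp<n$ matches the paper: both use the fractional Sobolev inequality of \cite[Th.~1.8]{ShS2015} followed by H\"older on $\Omega$. The problems arise for $sp\geq n$, where your route departs from the paper's and has genuine gaps. The reduction via Riesz potentials cannot satisfy both constraints you impose: the Hardy--Littlewood--Sobolev scaling $\tfrac1{p_0}=\tfrac1p-\tfrac{s-s_0}{n}$ forces the invariant $\tfrac1{p_0}-\tfrac{s_0}{n}=\tfrac1p-\tfrac sn$, so $s_0p_0<n$ holds if and only if $sp<n$. Lowering $s$ along the HLS chain never takes you out of the critical or supercritical regime, hence the subcritical Sobolev inequality $\|u\|_{L^{p_0^*}}\leq C\|D^{s_0}u\|_{L^{p_0}}$ you wish to invoke is never available for the pair $(s_0,p_0)$, and your claim that ``such a choice is always possible'' is false. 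The paper instead handles $sp=n$ by citing the fractional Trudinger inequality of \cite[Th.~1.10]{ShS2015}, which gives the local $L^q$ bound for all finite $q$ directly.

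The supercritical pointwise bound also fails at the step you flag as delicate. With $g=R\cdot D^su\in L^p$, the estimate $|u(x)|\leq\|g\|_{L^p}\,\||x-\cdot|^{s-n}\|_{L^{p'}(\R^n)}$ is vacuous because $|z|^{s-n}\notin L^{p'}(\{|z|>1\})$ when $sp>n$: membership of the kernel in $L^{p'}$ away from the origin requires $(n-s)p'>n$, i.e.\ precisely $sp<n$. The integral $\int |x-y|^{s-n}g(y)\,dy$ need not converge for $g\in L^p$ in this regime, which is why the fundamental theorem of calculus in \cite{ShS2015} is formulated for the difference $u(x)-u(y)$ (whose kernel has better decay at infinity) and yields a bound on the H\"older seminorm of $u$ by $\|D^su\|_{L^p}$, not on $\|u\|_{L^\infty(\O)}$. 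The paper's proof invokes instead the Morrey-type embedding of Proposition~\ref{Theorem properties H^{s,p}}\,\emph{\ref{item:HspMorrey})} for this case, which is a different input that your chain does not reproduce.
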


The case $sp<n$ is an immediate consequence of \cite[Th.\ 1.8]{ShS2015}, where the continuous embedding of $H^{s,p}(\mathbb{R}^n)$ in $L^{p^*}(\Rn)$ is shown.
Case $sp=n$ is a consequence of \cite[Th.\ 1.10]{ShS2015}, where it is proved in this context the version of Trudinger's inequality, which implies the embedding of $H^{s,p}(\mathbb{R}^n)$ in $L^q_{\loc} (\Rn)$ for all $q \in [1, \infty)$. 
Finally, the case $sp>n$ is a consequence of Proposition \ref{Theorem properties H^{s,p}}\,\emph{\ref{item:HspMorrey})}.

The following result decides which of the embeddings of Theorem \ref{th:PoincareSobolev} are compact.
We will indicate by $\weakc$ weak convergence.

\begin{teo} \label{Bessel embedding theorem}
Set $0<s<1$ and $1<p<\infty$. Let $\Omega\subset \mathbb{R}^n$ be open and bounded and $g \in H^{s,p}(\mathbb{R}^n)$. Then for any sequence $\{u_j \}_{j \in \N} \subset  H_{g}^{s,p}(\Omega)$
such that
\begin{equation*}
u_j \rightharpoonup u \quad \text{in } H^{s,p} (\Rn),
\end{equation*}
for some $u \in H^{s,p} (\Rn)$, one has $u \in H^{s,p}_g (\O)$ and
\begin{equation*}
u_j \rightarrow u \quad \text{in } L^q (\Rn),
\end{equation*}
for every $q$ satisfying
\[
\begin{cases}
 q\in \left[1, p^* \right) & \text{if } sp<n , \\
 q\in[1, \infty) & \text{if } sp = n , \\
 q\in[1, \infty] & \text{if } sp > n .
\end{cases}
\] 
\end{teo}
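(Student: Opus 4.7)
The plan is to reduce the problem to a Rellich--Kondrachov-type compactness for Bessel potential spaces on bounded sets by working with the shifted sequence $v_j := u_j - g$, which vanishes a.e.\ on $\Omega^c$. Since $u_j \weakc u$ in $H^{s,p}(\Rn)$, the Banach--Steinhaus theorem gives a uniform $H^{s,p}$-bound on $\{u_j\}$, and composing the weak convergence with the continuous inclusion $H^{s,p}(\Rn)\hookrightarrow L^p(\Rn)$ (Theorem \ref{th:PoincareSobolev}) yields $v_j \weakc v := u - g$ in $L^p(\Rn)$. The linear subspace $\{w \in L^p(\Rn): w = 0 \text{ a.e.\ on } \Omega^c\}$ is norm-closed, hence weakly closed, so $v = 0$ on $\Omega^c$, i.e.\ $u \in H^{s,p}_g(\Omega)$.

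Next, for strong $L^q$ convergence it suffices to prove $v_j \to v$ in $L^q(\Rn)$ since $u_j - u = v_j - v$. All $v_j$ and $v$ are supported in $\overline\Omega$, so fixing any bounded ball $B \supset \overline\Omega$ reduces matters to convergence in $L^q(B)$. Here I would invoke the identification of $H^{s,p}(\Rn)$ with the classical Bessel potential space (\cite[Th.\ 1.7]{ShS2015} and \cite{Adams}), under which the Rellich--Kondrachov theorem for Bessel spaces asserts that the restriction map $H^{s,p}(\Rn) \to L^q(B)$ is \emph{compact} precisely in the ranges stated: $q\in[1,p^*)$ when $sp<n$, $q\in[1,\infty)$ when $sp=n$, and $q\in[1,\infty]$ when $sp>n$. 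Applying this to the bounded sequence $\{v_j\}$ produces, along a subsequence, strong convergence in $L^q(B)$; since all $v_j - v$ vanish outside $B$, this gives strong convergence in $L^q(\Rn)$. The weak limit being uniquely determined by $v$, the usual subsequence-of-subsequence argument upgrades subsequential convergence to convergence of the full sequence.

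The step I expect to require the most care is the borderline case $sp>n$ with $q=\infty$, which lies outside the usual $L^q$-compactness toolkit. There I would argue separately via the Morrey-type embedding of Proposition \ref{Theorem properties H^{s,p}}\,\emph{\ref{item:HspMorrey})}: $\{v_j\}$ is uniformly bounded in $C^{0,\mu}(\Rn)$ with $\mu = s - n/p > 0$, so the Arzel\`a--Ascoli theorem applied on the compact set $\overline B$ yields a uniformly convergent subsequence, and the identity $v_j \equiv 0 \equiv v$ on $B^c$ extends that uniform convergence to all of $\Rn$, giving convergence in $L^\infty(\Rn)$. Once each exponent regime is covered, the proof is complete; all remaining work is the routine verification that the Bessel-space Rellich--Kondrachov statement can be quoted in exactly the form needed.
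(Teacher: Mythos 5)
Your proof is correct and rests on the same two pillars as the paper's: compactness properties of Bessel potential spaces for the subcritical and critical exponent regimes, and the Morrey embedding plus Arzel\`a--Ascoli for $sp>n$. The execution differs in a few places that are worth noting. The paper, for $sp<n$, cites \cite[Th.\ 2.2]{ShS2018} wholesale, a result formulated precisely for sequences in the constrained affine space $H^{s,p}_g(\O)$, so both conclusions ($u\in H^{s,p}_g(\O)$ and the strong $L^q$ convergence) are pre-packaged there; for $sp=n$ it explicitly lowers the smoothness, using Proposition~\ref{Theorem properties H^{s,p}}\,\emph{\ref{item:Hspembed})} or \emph{\ref{item:Hspfractional})} to pass to some $H^{s_1,p}$ with $s_1 p<n$ and then apply the $sp<n$ case, since $p_1^*\to\infty$ as $s_1\uparrow s$. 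You instead shift to $v_j:=u_j-g$, which has support in $\overline\O$, prove $u\in H^{s,p}_g(\O)$ directly via weak convergence in $L^p$ and Mazur's theorem (closed convex sets are weakly closed), and then invoke a local compactness (Rellich--Kondrachov-type) statement for restrictions $H^{s,p}(\Rn)\to L^q(B)$. This buys you a more self-contained argument that does not depend on the exact formulation of \cite[Th.\ 2.2]{ShS2018}, at the cost of needing to pin down the local Bessel-space compactness in the right form --- in particular, for $sp=n$ you would still want the same dodge as the paper (pass to $s_1<s$) to cover arbitrary finite $q$, since there is no $p^*$ to anchor the claim. Your treatment of $sp>n$ via $C^{0,\mu}$ boundedness and Arzel\`a--Ascoli on $\overline B$, extended to $\Rn$ using that $v_j\equiv 0$ on $B^c$, is essentially identical to the paper's. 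The subsequence-of-subsequences upgrade is standard and fills a point the paper leaves implicit.
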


Case $sp<n$ is actually \cite[Th.\ 2.2]{ShS2018}. Case $sp=n$ follows from the former having in mind Proposition 2.1, part \emph{\ref{item:Hspembed})} or else part \emph{\ref{item:Hspfractional})}. Finally, the case $sp>n$ is a consequence of Proposition \ref{Theorem properties H^{s,p}}\,\emph{\ref{item:HspMorrey})} and the compact embedding of $C^{0,\mu} (\bar{\O})$ into $C (\bar{\O})$.

It is worth mentioning that case $p=1$ is intentionally avoided. First, because the original statements of Proposition \ref{Theorem properties H^{s,p}} and Theorems \ref{th:PoincareSobolev} and \ref{Bessel embedding theorem} exclude this case, and, second, because we are concerned with a general existence theory that requires reflexive spaces.

\subsection[Examples of functions in $H^{s,p}(\mathbb{R}^n)$]{Examples of functions in $H^{s,p}(\mathbb{R}^n)$}

One of the motivations of this study is to propose an existence theory formulated in spaces wider than classical Sobolev spaces.
As a consequence of Proposition \ref{Theorem properties H^{s,p}}\,\emph{\ref{item:Hspfractional})}, classical Sobolev spaces are continuously embedded in $H^{s,p}$ spaces.
Further, we are interested in functions that belong to $H^{s,p}$ but not to $W^{1,p}$.
Necessarily, those functions must exhibit some type of singularity.
We focus on two important singularities in solid mechanics: discontinuities along hypersurfaces and at a single point. The later corresponds with the paradigmatic case of cavitation.
For simplicity, we study as a model for singularities along hypersurfaces a function whose first component is the characteristic function $\chi_Q$ of the unit cube $Q$, while the other components are $C^{\infty}_c$ functions.
As a model for singularity at a point, we study a radial function of compact support exhibiting one cavity at the origin.
In both examples the functions have compact support: this simplifies the analysis since it avoids the issue of the integrability at infinity, and, hence, allows us to focus solely on the singularity.

We start with the case of singularity along a hypersurface.
There is an extensive literature on when the characteristic function of a set (especially, of an open bounded Lipschitz set) belongs to a functional space of fractional regularity (see, e.g., \cite{Triebel83,RunSi,sickel,Mazja11,FaracoRogers}).
We exploit those results to give a quick proof of the following lemma.

\begin{lem}\label{le:fracture}
Set $0<s<1$ and $1<p<\infty$.
Let $Q=(0,1)^n$ and $\f_2, \ldots, \f_n \in C_c^\infty(\mathbb{R}^n)$.
Define $u=(\chi_Q, \f_2, \ldots, \f_n)$.
Then
\[
 u \in H^{s,p}(\mathbb{R}^n,\mathbb{R}^n) \ \text{ if } \ p < \frac{1}{s}, \qquad \text{and} \qquad u \notin H^{s,p}(\mathbb{R}^n,\mathbb{R}^n) \ \text{ if } \ p>\frac{1}{s} .
\]
\end{lem}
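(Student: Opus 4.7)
The plan is to decouple the first component from the rest and appeal to Proposition~\ref{Theorem properties H^{s,p}}\,\emph{\ref{item:Hspfractional})} to sandwich the Bessel space $H^{s,p}$ between two Sobolev--Slobodeckij spaces $W^{s',p}$ whose scaling behaviour on characteristic functions is classical and easy to verify directly via the Gagliardo seminorm. Since $C_c^\infty(\Rn) \subset H^{s,p}(\Rn)$ for every admissible $s,p$ (any Schwartz function belongs to the Bessel potential space), the components $\f_2, \ldots, \f_n$ give no trouble, so the question collapses to whether $\chi_Q \in H^{s,p}(\Rn)$.

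For the sufficient direction $p < 1/s$, I would pick any $s' \in (s, 1/p)$, so that $s'p < 1$, and show $\chi_Q \in W^{s',p}(\Rn)$ directly. Then Proposition~\ref{Theorem properties H^{s,p}}\,\emph{\ref{item:Hspfractional})} gives $W^{s',p}(\Rn) \hookrightarrow H^{s,p}(\Rn)$ (since $s < s'$), yielding $\chi_Q \in H^{s,p}(\Rn)$. For the necessary direction $p > 1/s$, I argue by contradiction: assuming $\chi_Q \in H^{s,p}(\Rn)$, pick $s'' \in (1/p, s)$, so that $s''p > 1$; the same proposition gives $H^{s,p}(\Rn) \hookrightarrow W^{s'',p}(\Rn)$, forcing $\chi_Q \in W^{s'',p}(\Rn)$, which I will then rule out.

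The technical heart of the argument is therefore the well-known dichotomy $\chi_Q \in W^{\sigma,p}(\Rn) \iff \sigma p < 1$ for a cube $Q$. Writing
\[
 [\chi_Q]_{W^{\sigma,p}(\Rn)}^{p} = 2 \int_{Q} \int_{Q^c} \frac{dy\, dx}{|x-y|^{n+\sigma p}},
\]
one checks that for $x \in Q$ close to the boundary the inner integral is comparable to $\dist(x,\partial Q)^{-\sigma p}$, while for $x$ away from $\partial Q$ it stays bounded. Integrating the resulting boundary-layer singularity over $Q$ yields a finite value precisely when $\sigma p < 1$, and a divergent one when $\sigma p > 1$. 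Both halves of the lemma then follow from this computation applied with $\sigma = s'$ and $\sigma = s''$, respectively.

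The only genuine obstacle is the boundary-layer estimate just described, which is a routine but slightly delicate calculation one could do by hand on a cube or quote from standard references such as those cited in the paragraph preceding the lemma. Note that the critical case $p = 1/s$ is deliberately left open, mirroring the fact that at $\sigma p = 1$ membership in $W^{\sigma,p}$ and $H^{\sigma,p}$ depends on finer Besov/Triebel--Lizorkin structure than the sandwich in Proposition~\ref{Theorem properties H^{s,p}}\,\emph{\ref{item:Hspfractional})} can detect.
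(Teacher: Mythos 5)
Your proof is correct and follows the paper's strategy: reduce to the scalar question of whether $\chi_Q \in H^{s,p}(\Rn)$, sandwich $H^{s,p}$ between Sobolev--Slobodeckij spaces $W^{s',p}$ via Proposition~\ref{Theorem properties H^{s,p}}\,\emph{\ref{item:Hspfractional})}, and exploit the dichotomy $\chi_Q \in W^{\sigma,p}(\Rn) \iff \sigma p < 1$. The one place where you genuinely diverge is in how you justify that dichotomy: the paper identifies $W^{\sigma,p}$ with the Triebel--Lizorkin space $F^{\sigma}_{p,p}$ and then quotes a criterion from Runst--Sickel for characteristic functions, whereas you propose a self-contained Gagliardo-seminorm computation with a boundary-layer estimate. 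Your route is more elementary and bypasses the Besov/Triebel--Lizorkin machinery; the cost is that the estimate $\int_{Q^c}|x-y|^{-n-\sigma p}\,dy \asymp \dist(x,\partial Q)^{-\sigma p}$ has to be checked uniformly on the cube (including near edges and corners, where the geometry is less transparent than near a flat face), which is routine for a bounded Lipschitz domain but is precisely the verification that the paper outsources to the cited reference.
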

\begin{proof}
As $C_c^\infty(\mathbb{R}^n) \subset H^{s,p}(\mathbb{R}^n)$ (we will show this in Lemma \ref{Lema difference quotient bound}), we have that $u \in H^{s,p}(\mathbb{R}^n,\mathbb{R}^n)$ if and only if $\chi_Q \in H^{s,p}(\mathbb{R}^n)$.

The fractional Sobolev space $W^{s,p}$ coincides with the Triebel--Lizorkin space $F^s_{p,p}$ and with the Besov space $B^s_{p,p}$ (see, e.g., \cite[Sect.\ 2.3.5]{Triebel83} or \cite[Prop.\ 2.1.2]{RunSi}).
This result together with \cite[Lemma 4.6.3.2]{RunSi} shows that $\chi_Q \in W^{s,p}$ if and only if $s p < 1$.
Proposition \ref{Theorem properties H^{s,p}}\,\emph{\ref{item:Hspfractional})} concludes the proof.
\end{proof}

For the case of cavitation, the result is the following.

\begin{lem}\label{le:cavitation}
Set $0<s<1$ and $1<p<\infty$.
Let $\f \in C^{\infty}_c ([0,\infty))$ be such that $\f(0) > 0$, and $u(x)=\frac{x}{|x|} \f(|x|)$.
Then
 \[ u \in H^{s,p}(\mathbb{R}^n,\mathbb{R}^n)  \ \text{ if } \ p < \frac{n}{s} , \qquad \text{and} \qquad u \notin H^{s,p}(\mathbb{R}^n,\mathbb{R}^n)  \ \text{ if } \ p> \frac{n}{s} .
\]
\end{lem}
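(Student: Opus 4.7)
The plan is to sandwich membership in $H^{s,p}$ by membership in classical fractional Sobolev spaces via Proposition \ref{Theorem properties H^{s,p}}\,\emph{\ref{item:Hspfractional})}, thereby reducing everything to a Gagliardo seminorm computation. Because $u$ is bounded and compactly supported, $u \in L^p(\Rn, \Rn)$ for every $p$, so the only question is whether
\[
  [u_i]_{t, p}^p := \int \int \frac{|u_i(x) - u_i(y)|^p}{|x - y|^{n + tp}} \, dx \, dy
\]
is finite for the coordinate components $u_i$. If $sp < n$, I pick $s_2 \in (s, \min(1, n/p))$ and use the inclusion $W^{s_2, p} \hookrightarrow H^{s, p}$ from Proposition \ref{Theorem properties H^{s,p}}\,\emph{\ref{item:Hspfractional})} to reduce the task to $u \in W^{s_2, p}$; if $sp > n$, I pick $s_1 \in (n/p, s)$ and use the inclusion $H^{s, p} \hookrightarrow W^{s_1, p}$ to reduce it to $u \notin W^{s_1, p}$. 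The remaining claim is therefore: $u \in W^{t, p}(\Rn, \Rn)$ if and only if $tp < n$.

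For the \emph{if} direction I would exploit the elementary estimate $\bigl| \tfrac{x}{|x|} - \tfrac{y}{|y|} \bigr| \leq 2 |x - y|/\max(|x|, |y|)$ together with the Lipschitz continuity of $\f$ to obtain
\[
  |u(x) - u(y)| \leq C \min\!\left(1, \frac{|x - y|}{\max(|x|, |y|)} \right) .
\]
Taking $R$ so that $\supp u \subset B(0, R/2)$, the contribution to the Gagliardo double integral coming from pairs outside $B(0, R) \times B(0, R)$ is finite because $u$ vanishes in a neighborhood of $\partial B(0, R)$. On the remaining region, symmetry lets me restrict to $|x| \leq |y|$; then fixing $y \neq 0$ and rescaling $x = |y| z$ turns the inner integral into $|y|^{-tp}$ times a rotationally invariant constant $\int_{|z| \leq 1} \min(1, |z - \hat y|)^p |z - \hat y|^{-(n + tp)} \, dz$ that is convergent precisely because $t < 1$. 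What remains is
\[
  [u]_{t, p}^p \leq C \int_{B(0, R)} |y|^{-tp} \, dy ,
\]
which is finite exactly when $tp < n$.

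For the \emph{only if} direction I would isolate a divergent contribution near the origin. Fixing $\delta > 0$ so small that $\f(|x|) \geq \f(0) / 2$ on $B(0, \delta)$, for $x = r \omega$ and $y = \rho \sigma$ with $r, \rho \in (0, \delta)$ and $\omega \cdot \sigma < -1/2$ one has $|u(x) - u(y)| \geq c \, \f(0) > 0$ and $|x - y|$ comparable to $r + \rho$, so in polar coordinates
\[
  [u]_{t, p}^p \geq c' \int_0^\delta \int_0^\delta \frac{r^{n-1} \rho^{n-1}}{(r + \rho)^{n + tp}} \, dr \, d\rho .
\]
Substituting $r = \rho \tau$ factorises the right-hand side as $\int_0^\delta \rho^{n - 1 - tp} \, d\rho$ times the convergent integral $\int_0^\infty \tau^{n - 1} (1 + \tau)^{-(n + tp)} \, d\tau$, and the first factor diverges exactly when $tp \geq n$. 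The main technical difficulty is the bookkeeping of the scaling estimate $|u(x) - u(y)| \lesssim |x - y| / \max(|x|, |y|)$ together with the rescaling that extracts the factor $|y|^{-tp}$; once those are in place, both halves reduce to routine polar-coordinate calculations.
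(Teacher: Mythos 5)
Your proof is correct but follows a genuinely different route from the paper's. For the positive direction ($sp<n$), the paper invokes the classical fact that the cavitation map lies in $W^{1,q}(\Rn,\Rn)$ for every $q<n$ (citing Ball's cavitation paper), then uses the embedding $H^{t,q}\hookrightarrow H^{s,p}$ from Proposition~\ref{Theorem properties H^{s,p}}\,\emph{\ref{item:Hspembed})} and verifies that the accessible range of $(s,p)$ is exactly $p<n/s$; for the negative direction ($sp>n$), the paper simply uses the Morrey embedding of Proposition~\ref{Theorem properties H^{s,p}}\,\emph{\ref{item:HspMorrey})} to observe that $H^{s,p}$ functions would have to be continuous while $u$ is not. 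You instead sandwich $H^{s,p}$ between two fractional Sobolev spaces via Proposition~\ref{Theorem properties H^{s,p}}\,\emph{\ref{item:Hspfractional})} and reduce both implications to a single self-contained claim, $u\in W^{t,p}$ iff $tp<n$, which you establish by a direct Gagliardo-seminorm estimate: the pointwise bound $|u(x)-u(y)|\lesssim\min(1,|x-y|/\max(|x|,|y|))$ and a rescaling $x=|y|z$ give the positive part, while a polar-coordinates lower bound on antipodal directions near the origin gives the negative part. Both arguments are sound. The paper's proof is shorter, outsourcing the hard analysis to known results, but relies on the Sobolev membership of the cavitation map and on the Morrey embedding, neither of which is elementary; your argument is more hands-on and arguably more transparent about \emph{why} the threshold is $p=n/s$, and as a by-product it gives the sharp $W^{t,p}$ characterisation (including the endpoint $tp=n$) rather than only the two strict inequalities needed. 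The only bookkeeping point worth flagging is that the pointwise estimate $|u(x)-u(y)|\lesssim|x-y|/\max(|x|,|y|)$ incorporates the Lipschitz term $L|x-y|$ only because you restricted to $x,y\in B(0,R)$, where $|x-y|\le R\,|x-y|/\max(|x|,|y|)$; you do make that restriction, so the argument is complete.
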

 \begin{proof}
It is well known that $u \in W^{1,q}(\mathbb{R}^n,\mathbb{R}^n)$ whenever $1<q<n$ (see, e.g., \cite[Lemma 4.1]{Ball82}), and therefore $u\in H^{t,q}(\mathbb{R}^n,\mathbb{R}^n)$ for any $0<t<1$ and $1<q<n$.
Applying now Proposition \ref{Theorem properties H^{s,p}}\,\emph{\ref{item:Hspembed})}, we have that $u\in H^{s, p} (\mathbb{R}^n,\mathbb{R}^n)$ for any $s\in (0,t)$ and $p \in [q, \frac{nq}{n - (t-s)q}]$.
Now we observe that the set of $(s,p) \in \R^2$ such that there exist $q \in (1,n)$ and $t \in (0,1)$ for which $s \in (0, t)$ and $p \in [q, \frac{nq}{n - (t-s)q}]$ is precisely the set of $(s,p)$ such that $s \in (0,1)$ and $p \in (1, \frac{n}{s})$.
Therefore, $u \in H^{s,p}(\mathbb{R}^n,\mathbb{R}^n)$  if $p < \frac{n}{s}$.
 
On the other hand, when $p > \frac{n}{s}$, by Proposition \ref{Theorem properties H^{s,p}}\,\emph{\ref{item:HspMorrey})}, $H^{s,p}(\mathbb{R}^n,\mathbb{R}^n)$ functions are continuous.
Since $u$ is discontinuous, $u\notin H^{s,p}(\mathbb{R}^n,\mathbb{R}^n)$ if $p>\frac{n}{s}$.
 \end{proof}
 %


\section{Calculus in $H^{s,p}$}\label{se:preliminaries}

In this section we present some calculus rules of nonlocal functionals related to $H^{s,p}$, notably, an integration by parts formula.

We start with a sufficient condition for the $s$-fractional gradient to be defined everywhere.

\begin{lem} \label{Lema difference quotient bound}
Let $0< \alpha < s <1$ and $\varphi \in C^{0,\alpha}(\mathbb{R}^n) \cap C^{0,1}(\mathbb{R}^n)$.
Then
\begin{equation} \label{eq:infty norm bound}
 \sup_{x \in \Rn} \int \frac{\left| \varphi(x)-\varphi(y) \right|}{|x-y|^{n+s}} dy <\infty. 
\end{equation}
If, in addition, $\varphi$ has compact support then $D^s \varphi \in L^r(\Rn)$, for every $r \in [1,\infty]$.
\end{lem}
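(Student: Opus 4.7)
The plan is to prove the uniform bound \eqref{eq:infty norm bound} by splitting the integral into a near-field part on $B(x,1)$ and a far-field part on $B(x,1)^c$, where on each we use a different regularity of $\f$. On the near region I would use the Lipschitz condition: $|\f(x)-\f(y)| \le [\f]_{C^{0,1}} |x-y|$, so the integrand is controlled by $|x-y|^{1-n-s}$, and a polar-coordinate computation gives $\int_{B(x,1)} |x-y|^{1-n-s}\, dy = C \int_0^1 r^{-s}\, dr$, which is finite because $s<1$. On the far region I would use the H\"older bound $|\f(x)-\f(y)| \le [\f]_{C^{0,\a}} |x-y|^\a$, so the integrand is controlled by $|x-y|^{\a-n-s}$, and again polar coordinates reduce this to $\int_1^\infty r^{\a-s-1}\, dr$, which is finite precisely because $\a<s$. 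Summing the two bounds yields a constant independent of $x$, proving \eqref{eq:infty norm bound}.

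For the second assertion, assume $\spt \f \subset B(0,R)$. The bound \eqref{eq:infty norm bound} gives $D^s \f \in L^\infty(\Rn)$ directly from the definition, so it suffices to establish $D^s \f \in L^1(\Rn)$ and conclude $L^r$ for $r\in[1,\infty]$ by interpolation. I would split $\Rn$ into $B(0,2R)$ and its complement. On $B(0,2R)$ the $L^\infty$ bound gives $\int_{B(0,2R)} |D^s \f|\, dx \le C|B(0,2R)| < \infty$. For $|x|\ge 2R$, I would use the alternative form \eqref{Alternative gradient def}, which makes the principal value disappear since the integrand is in $L^1$ for such $x$: for $y\in\spt\f$ one has $|x-y| \ge |x|-R \ge |x|/2$, so
\[
|D^s \f(x)| \le |c_{n,s}| \int_{\spt\f} \frac{|\f(y)|}{|x-y|^{n+s}}\, dy \le C \,\|\f\|_{L^1} \, |x|^{-n-s}.
\]
Integrating in polar coordinates then gives $\int_{|x|\ge 2R} |D^s\f(x)|\, dx \le C \int_{2R}^\infty r^{-s-1}\, dr < \infty$, so $D^s\f \in L^1(\Rn)$. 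Combined with the uniform bound, $D^s\f \in L^r(\Rn)$ for all $r\in[1,\infty]$.

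The only mildly delicate step is making sure the principal value in the definition of $D^s\f$ is well defined for every $x$ (not merely almost every), but this is immediate from \eqref{eq:infty norm bound}: the integrand is absolutely integrable away from any neighbourhood of $x$, and the symmetry identity preceding \eqref{Alternative gradient def} lets one antisymmetrize the singularity, so the limit defining the principal value exists everywhere. I do not expect any serious obstacle; the two exponent conditions $s<1$ and $\a<s$ are exactly what is needed for the two radial integrals to converge, and the compact support hypothesis is used only to turn the $L^\infty$ bound into an $L^1$ bound through the decay estimate from \eqref{Alternative gradient def}.
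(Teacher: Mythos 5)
The uniform bound \eqref{eq:infty norm bound} is proved exactly as in the paper: split into $B(x,1)$ (Lipschitz bound, exponent $n+s-1<n$) and $B(x,1)^c$ (H\"older bound, exponent $n+s-\alpha>n$). Your argument for $D^s\f\in L^1(\Rn)$, however, takes a genuinely different route. The paper writes $\int|D^s\f|\le |c_{n,s}|(A+B)$ where $A$ and $B$ split the \emph{inner} $y$-integral by $F=\supp\f$ versus $F^c$, then applies Fubini and \eqref{eq:infty norm bound} (together with the observation that the integrand vanishes on $F^c\times F^c$) to bound both. You instead split the \emph{outer} $x$-integral: on a large ball containing $\supp\f$ the $L^\infty$ bound suffices, while for $|x|\geq 2R$ one has $\f(x)=0$, the principal value collapses to a proper integral supported in $B(0,R)$, and the lower bound $|x-y|\geq|x|/2$ yields the decay $|D^s\f(x)|\leq C\,|x|^{-n-s}$, which is integrable at infinity since $s>0$. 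Both arguments are correct; the paper's is marginally slicker in that it recycles \eqref{eq:infty norm bound} directly and needs no quantitative decay, while yours is more concrete and additionally extracts the explicit $|x|^{-n-s}$ decay rate of the fractional gradient, which is a useful piece of information in its own right. One small remark on your final paragraph: once \eqref{eq:infty norm bound} holds, the integrand in the definition of $D^s\f(x)$ is absolutely integrable over all of $\Rn$, so the principal value exists everywhere for the trivial reason that it is an honest Lebesgue integral; no antisymmetrisation of the singularity is needed.
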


\begin{proof}
Let $L$ and $C$ be, respectively, the Lipschitz and $\alpha$-H\"older constants of $\varphi$.
Then, for every $x \in \mathbb{R}^n$,
\begin{align*}
\int \frac{\left| \varphi(x)-\varphi(y) \right|}{|x-y|^{n+s}} \, dy &\leq \int_{B(x,1)} \frac{L}{|x-y|^{n+s-1}} \, dy+ \int_{B(x,1)^c} \frac{C}{|x-y|^{n+s-\alpha}} \, dy 
= \int_{B(0,1)} \frac{L}{|z|^{n+s-1}} \, dz + \int_{B(0,1)^c} \frac{C}{|z|^{n+s-\alpha}} \, dz< \infty .
\end{align*}

This means that $D^s\varphi \in L^\infty(\Rn)$.

Next we are going to see that $D^s\varphi \in L^1(\Rn)$ when $\varphi$ has compact support.
Denote by $F$ the support of $\varphi$.
Then
\begin{equation*}
 \int\left| c_{n,s}\int \frac{\varphi(x)-\varphi(y)}{|x-y|^{n+s}}\frac{x-y}{|x-y|} dy\right| dx \leq |c_{n,s}| \left( A + B \right) ,
\end{equation*}
where
\[
A :=\int \int_{F} \frac{\left| \varphi(x)-\varphi(y) \right|}{|x-y|^{n+s}} dy \, dx, \qquad
B :=\int \int_{F^c} \frac{\left| \varphi(x)-\varphi(y) \right|}{|x-y|^{n+s}} dy \, dx .
\]
Now, we observe that, applying Fubini's Theorem and \eqref{eq:infty norm bound},
\begin{equation} \label{eq:integrability of smooth functions 1}
A = \int_{F}\int  \frac{\left| \varphi(x)-\varphi(y) \right|}{|x-y|^{n+s}}  dx \, dy < \infty .
\end{equation}
We notice that $\left| \varphi(x)-\varphi(y) \right|=0$ for every $(x,y) \in F^c \times F^c$. Therefore, applying again \eqref{eq:infty norm bound} we get
\begin{equation}\label{eq:integrability of smooth functions 2}
B =\int_{F}\int_{F^c} \frac{\left| \varphi(x)-\varphi(y) \right|}{|x-y|^{n+s}}  dy \, dx \leq \int_{F}\int \frac{\left| \varphi(x)-\varphi(y) \right|}{|x-y|^{n+s}}  dy \, dx < \infty.
\end{equation}
As a consequence of \eqref{eq:integrability of smooth functions 1} and \eqref{eq:integrability of smooth functions 2}, $D^s \varphi \in L^1(\Rn)$.
Finally, through a standard interpolation argument, we get that $D^s\varphi \in L^r(\Rn)$ for all $r \in [1,\infty]$.
\end{proof}
Lemma \ref{Lema difference quotient bound} implies, in particular, that $D^s \f$ is defined everywhere for $\varphi \in C^{0,\alpha}(\mathbb{R}^n) \cap C^{0,1}(\mathbb{R}^n)$ and $0< \alpha < s <1$.
It also shows that $C^1_c (\Rn) \subset H^{s,p} (\Rn)$ for every $0 < s < 1$ and $1 < p < \infty$.

The following result defines a nonlocal operator related to the $s$-fractional gradient.

\begin{lem} \label{Lema operador lineal} 
Let $1\leq q < \infty$ and $0 < \alpha < s< 1$.
Let $\varphi \in C^{0,\alpha}(\mathbb{R}^n) \cap C^{0,1}(\mathbb{R}^n)$ and $k \in \N$.
Then, the operator $K_{\varphi}: L^q(\mathbb{R}^n, \mathbb{R}^{k \times n}) \rightarrow L^q(\mathbb{R}^n, \mathbb{R}^k)$ defined as
\begin{equation*}
 K_{\varphi}(U)(x)= c_{n,s} \int \frac{\varphi(x)-\varphi(y)}{|x-y|^{n+s}} U(y) \frac{x-y}{|x-y|} dy , \qquad \text{a.e. } x \in \mathbb{R}^n ,
\end{equation*}
is linear and bounded.

If, in addition, $\varphi$ has compact support then $K_{\varphi}$ is bounded from $L^q(\mathbb{R}^n, \mathbb{R}^{k \times n})$ to $ L^p(\mathbb{R}^n, \mathbb{R}^k)$ for all $p \in [1,q]$.
\end{lem}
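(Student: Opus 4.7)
The plan focuses on the two boundedness assertions, since linearity is immediate from linearity of the integral. The overall strategy is to dominate $K_\varphi$ pointwise by a scalar integral operator with a positive symmetric kernel, and to obtain the $L^q\to L^q$ bound by interpolation between the $L^1$ and $L^\infty$ endpoints.

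I would start with the pointwise estimate
\[
|K_\varphi(U)(x)|\le \int \tilde K(x,y)\,|U(y)|\,dy, \qquad \tilde K(x,y):=|c_{n,s}|\,\frac{|\varphi(x)-\varphi(y)|}{|x-y|^{n+s}},
\]
which reduces matters to the scalar operator $T_0 f(x):=\int \tilde K(x,y)f(y)\,dy$. The estimate \eqref{eq:infty norm bound} of Lemma \ref{Lema difference quotient bound} gives $\sup_x\int\tilde K(x,y)\,dy<\infty$, and, since $\tilde K$ is symmetric in $x,y$, also $\sup_y\int\tilde K(x,y)\,dx<\infty$. The first bound yields $\|T_0 f\|_{L^\infty}\le C\|f\|_{L^\infty}$; the second, via Fubini, yields $\|T_0 f\|_{L^1}\le C\|f\|_{L^1}$. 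Riesz--Thorin interpolation then delivers $\|T_0 f\|_{L^q}\le C\|f\|_{L^q}$ for every $q\in[1,\infty]$, and applying this with $f=|U|$ proves the first assertion.

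For the second assertion, fix $R>0$ with $F:=\spt\varphi\subset B(0,R)$ and $p\in[1,q]$. I would split $\Rn$ as the disjoint union of $B(0,2R)$ and $B(0,2R)^c$. On the bounded ball, H\"older's inequality converts the $L^q$-bound from the first part into an $L^p$-bound with constant proportional to $|B(0,2R)|^{1/p-1/q}$. Outside the ball, $\tilde K(x,y)$ vanishes for $y\in F^c$ (as $\varphi(x)=\varphi(y)=0$), while for $y\in F$ the elementary inequality $|x-y|\ge |x|/2$ produces the decay
\[
|K_\varphi(U)(x)|\le \frac{C\,\|\varphi\|_\infty}{|x|^{n+s}}\int_F|U(y)|\,dy\le \frac{C\,|F|^{1-1/q}\,\|U\|_{L^q}}{|x|^{n+s}}.
\]
Since $(n+s)p>n$ whenever $p\ge 1$, the function $|x|^{-(n+s)}$ lies in $L^p(B(0,2R)^c)$, so the outer-ball norm is also controlled by $C\|U\|_{L^q}$. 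The only real obstacle is organizational: Lemma \ref{Lema difference quotient bound} does all the analytic work, and the compact-support improvement enters only through the elementary decay of the kernel at infinity.
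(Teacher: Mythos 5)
Your argument is correct, and for the second part it takes a noticeably different route from the paper. For the $L^q\to L^q$ bound you invoke the Schur test (the $L^1$ and $L^\infty$ endpoints followed by Riesz--Thorin); the paper instead splits the kernel into near- and far-diagonal pieces and runs H\"older's inequality plus Fubini directly on each, which is essentially an inlined proof of the same Schur-type estimate, so there is no real difference in content. For the compact-support improvement, however, the two proofs diverge: the paper proves $L^q\to L^1$ boundedness by splitting the $y$-integral according to whether $y\in F$ or $y\in F^c$ (using Fubini, H\"older, and Lemma \ref{Lema difference quotient bound}) and then interpolates between $L^1$ and $L^q$ to get all $p\in[1,q]$, whereas you split the $x$-integral by whether $x$ lies in $B(0,2R)$ or not, reduce the inner-ball part to the already-established $L^q$ bound via H\"older on a finite-measure set, and control the outer-ball part by the explicit decay $|K_\varphi(U)(x)|\lesssim |x|^{-(n+s)}\|U\|_{L^q}$, which is $L^p$-integrable at infinity since $(n+s)p>n$. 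Your version is arguably more transparent: it exhibits the pointwise decay of $K_\varphi(U)$ at infinity (which the paper's interpolation argument does not make visible) and gets $L^q\to L^p$ directly without passing through the $L^1$ endpoint. Both proofs ultimately rest on the uniform kernel mass estimate \eqref{eq:infty norm bound} of Lemma \ref{Lema difference quotient bound}, so the analytic content is shared; the difference is in the organization of the compact-support step.
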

\begin{proof}
Let $U \in L^q(\mathbb{R}^n, \mathbb{R}^{k \times n})$.
We denote by $C$ a positive constant that does not depend on $U$ and whose value may vary along the proof.
For a.e.\ $x \in \mathbb{R}^n$ we have
\[
 \left| K_{\varphi} (U) (x) \right| \leq |c_{n,s}| \int  \frac{\left| \varphi(x)-\varphi(y) \right|}{|x-y|^{n+s}} \left| U(y) \right| dy ,
\]
so
\begin{equation}\label{eq:K1}
 \left| K_{\varphi} (U) (x) \right|^q \leq 2^{q-1}|c_{n,s}|^q \left( g(x) + h(x) \right) ,
\end{equation}
with
\[
 g(x) := \left(\int_{B(x,1)} \frac{\left| \varphi(x)-\varphi(y) \right|}{|x-y|^{n+s}} \left| U(y) \right| dy \right)^q, \qquad
 h(x) :=\left(\int_{B(x,1)^c} \frac{\left| \varphi(x)-\varphi(y) \right|}{|x-y|^{n+s}} \left| U(y) \right| dy \right)^q.
\]
Let $L$ be a Lipschitz and an $\alpha$-H\"older constant of $\varphi$.
Then, using that $\varphi$ is Lipschitz and applying H\"older's inequality, we get
\begin{align*}
 g(x) & \leq L^q \left(\int_{B(x,1)} \frac{|U(y)|}{|x-y|^{n+s-1}}dy \right)^q = L^q \left(\int_{B(0,1)}  \frac{|U(x-z)|}{|z|^{n+s-1}}dz \right)^q \\
 & \leq L^q \int_{B(0,1)} \frac{|U(x-z)|^q}{|z|^{n+s-1}}dz \left(\int_{B(0,1)} \frac{1}{|z|^{n+s-1}}dz\right)^{q-1} = C \int_{B(0,1)}  \frac{|U(x-z)|^q}{|z|^{n+s-1}}dz .
\end{align*}
Integrating,
\begin{equation}\label{eq:K2}
\begin{split}
 \int  g(x)\,dx & \leq C\int  \int_{B(0,1)}  \frac{|U(x-z)|^q}{|z|^{n+s-1}}dz \,dx =C \int_{B(0,1)}\frac{1}{|z|^{n+s-1}} \int  |U(x-z)|^q dx \,dz  \\
 &= C\left\| U\right\|_{L^q(\mathbb{R}^n, \R^{k \times n})}^q \int_{B(0,1)}\frac{1}{|z|^{n+s-1}}  dz \leq C \left\| U\right\|_{L^q(\mathbb{R}^n, \R^{k \times n})}^q . 
\end{split}
\end{equation}

As for the term $h$, using that $\varphi$ is $\alpha$-H\"older and applying H\"older's inequality,
\begin{align*}
 h(x) & \leq L^q \left( \int_{B(x,1)^c} \frac{|U(y)|}{|x-y|^{n+s-\alpha}} dy \right)^q  \\
& \leq L^q \int_{B(0,1)^c}\frac{\left| U(x-z) \right|^q}{|z|^{n+s-\alpha}} dz \left(\int_{B(0,1)^c}\frac{1}{|z|^{n+s-\alpha}}dz\right)^{{q-1}} \leq  C\int_{B(0,1)^c}\frac{\left| U(x-z) \right|^q}{|z|^{n+s-\alpha}} dz.
\end{align*}
Integrating,
\begin{equation}\label{eq:K3}
\begin{split}
\int h(x)\,dx&=C\int \int_{B(0,1)^c} \frac{| U(x-z)|^q}{|z|^{n+s-\alpha}}dz  \,dx = C \int_{B(0,1)^c}\frac{1}{|z|^{n+s-\alpha}} \int | U(x-z)|^qdx  \,dz  \\
&= C \left\| U\right\|_{L^q(\mathbb{R}^n, \R^{k \times n})}^q\int_{B(0,1)^c}\frac{1}{|z|^{n+s-\alpha}} dz \leq C \left\| U\right\|_{L^q(\mathbb{R}^n, \R^{k \times n})}^q .
\end{split}
\end{equation}
Putting together \eqref{eq:K1}, \eqref{eq:K2} and \eqref{eq:K3} we obtain
 \begin{equation*}
 \left\| K_{\varphi}(U)\right\|_{L^q(\mathbb{R}^n, \R^k)}^{q} \leq C  \left\| U\right\|_{L^q(\mathbb{R}^n, \R^{k \times n})}^q,
 \end{equation*}
and the first part of the proof is finished.

Next we are going to see that the linear operator $K_\varphi:L^q(\mathbb{R}^n, \R^{k \times n}) \rightarrow L^1(\mathbb{R}^n, \R^k)$ is bounded in the case $\varphi$ has compact support.
Denote by $F$ the support of $\varphi$.
Then
\begin{equation}\label{eq:2K1}
 \int \left| K_{\varphi} (U) (x) \right| dx \leq |c_{n,s}| \left( A + B \right) ,
\end{equation}
where
\[
A :=\int \int_{F} \frac{\left| \varphi(x)-\varphi(y) \right|}{|x-y|^{n+s}} \left| U(y) \right| dy \, dx, \qquad
B :=\int \int_{F^c} \frac{\left| \varphi(x)-\varphi(y) \right|}{|x-y|^{n+s}} \left| U(y) \right| dy \, dx .
\]
Now, we observe that, applying Fubini's Theorem, H\"older's inequality and Lemma \ref{Lema difference quotient bound},
\begin{equation}\label{eq:2K2}
A \leq \int_{F}\left| U(y) \right|\int  \frac{\left| \varphi(x)-\varphi(y) \right|}{|x-y|^{n+s}}  dx \, dy\leq C\left(\int_{F}\left| U(y)\right|^q dy\right)^{1/q} \leq C \left\| U\right\|_{L^q(\mathbb{R}^n, \R^{k \times n})} .
\end{equation}
We notice that $\left| \varphi(x)-\varphi(y) \right|=0$ for every $(x,y) \in F^c \times F^c$. Therefore, applying H\"older's inequality and Lemma \ref{Lema difference quotient bound} we get
\begin{align*}
B &=\int_{F}\int_{F^c} \frac{\left| \varphi(x)-\varphi(y) \right|}{|x-y|^{n+s}} \left| U(y) \right| dy \, dx \\
&\leq \int_{F}\left(\int_{F^c} \frac{\left| \varphi(x)-\varphi(y) \right|}{|x-y|^{n+s}}dy\right)^{1/q'}\left(\int_{F^c}\frac{\left| \varphi(x)-\varphi(y) \right|}{|x-y|^{n+s}}\left| U(y) \right|^q dy\right)^{1/q}dx \\
&\leq C  \int_{F}\left(\int_{F^c}\frac{\left| \varphi(x)-\varphi(y) \right|}{|x-y|^{n+s}}\left| U(y) \right|^q dy\right)^{1/q}dx.
\end{align*}
Using again H\"older's inequality, Lemma \ref{Lema difference quotient bound} and Fubini's Theorem, we obtain
\begin{equation}\label{eq:2K3}
\begin{split}
B &\leq C  \left(\int_{F}\int_{F^c}\frac{\left| \varphi(x)-\varphi(y) \right|}{|x-y|^{n+s}}\left| U(y) \right|^q dy \, dx\right)^{1/q} \left|F \right|^{1/{q'}} \\
&=C\left(\int_{F^c}\left| U(y) \right|^q \int_{F}\frac{\left| \varphi(x)-\varphi(y) \right|}{|x-y|^{n+s}} dx \, dy \right)^{1/q}\leq
C\left(\int_{F^c}\left| U(y) \right|^qdy \right)^{1/q} \leq C \left\| U\right\|_{L^q(\mathbb{R}^n, \R^{k \times n})} ,
\end{split}
\end{equation}
where $|F|$ denotes the measure of $F$.
Inequalities \eqref{eq:2K1}, \eqref{eq:2K2} and \eqref{eq:2K3} lead us to
\[
\left\| K_{\varphi}(U) \right\|_{L^1(\mathbb{R}^n, \R^k)} \leq C \left\| U \right\|_{L^q(\mathbb{R}^n, \R^{k \times n})}.
\]
Finally, through a standard interpolation argument, we get that $K_{\varphi }$ is bounded from $L^q(\mathbb{R}^n, \R^{k \times n})$ to $ L^p(\mathbb{R}^n, \R^k)$ for all $p \in [1,q]$.
\end{proof}

As a consequence of Lemma \ref{Lema operador lineal} and a general result, the operator $K_\varphi$ is continuous from the weak topology of $L^q(\mathbb{R}^n, \mathbb{R}^{k \times n})$ to the weak topology of $L^q(\mathbb{R}^n, \mathbb{R}^k)$ and, in the case of a $\f$ of compact support, from the weak topology of $L^q(\mathbb{R}^n, \mathbb{R}^{k \times n})$ to the weak topology of $L^p(\mathbb{R}^n, \mathbb{R}^k)$ for all $p \in [1,q]$.

The next lemma finds out the spaces where the sequence $\{D^s u_j\}$ is convergent, provided that $\{ u_j\}$ is convergent in $H^{s,p}$.
\begin{lem} \label{le:convergence of bounded supported functions} 
Let $0 < s < 1$ and $1 < p < \infty$. 
Let $u \in H^{s,p}(\Rn)$ and let $\{ u_j \}_{j\in \N} \subset H^{s,p}(\Rn)$ be a sequence converging to $u$ in $H^{s,p}(\Rn)$.
Assume that there is a compact $K \subset \Rn$ such that $\bigcup_{j=1}^{\infty} \supp u_j \subset K$.
Then $D^s u_j \to D^s u$ in $L^r(\Rn)$ for every $r \in [1,p]$.
\end{lem}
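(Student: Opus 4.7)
The plan is to reduce the problem to showing that $v_j := u_j - u$ satisfies $D^s v_j \to 0$ in $L^r(\Rn)$ for each $r\in[1,p]$. First I would observe that $u$ itself is supported in $K$: since $H^{s,p}$-convergence entails $L^p$-convergence and each $u_j$ vanishes on $K^c$, one has $\|u\|_{L^p(K^c)}\le\|u-u_j\|_{L^p(\Rn)}\to 0$. Hence $v_j\in H^{s,p}(\Rn)$ is supported in $K$, with $v_j\to 0$ in $L^p(\Rn)$ and $D^sv_j\to 0$ in $L^p(\Rn,\R^n)$. The case $r=p$ is then the hypothesis itself, so I only need to treat $1\le r<p$.

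Next, I would fix $R>0$ with $K\subset B(0,R)$ and split $\Rn=B(0,2R)\cup B(0,2R)^c$. On the bounded piece, H\"older's inequality gives
\[
 \|D^sv_j\|_{L^r(B(0,2R))}\le |B(0,2R)|^{\frac{1}{r}-\frac{1}{p}}\,\|D^sv_j\|_{L^p(B(0,2R))}\longrightarrow 0,
\]
using only the $L^p$-convergence of $D^sv_j$.

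The real content is the decay estimate on the far field, which reflects the nonlocal nature of $D^s$. For $x\in B(0,2R)^c$ we have $v_j(x)=0$, and, because $v_j$ vanishes outside $K$, the defining integral reduces to an ordinary (non-principal-value) integral over $K$:
\[
 D^sv_j(x)=-c_{n,s}\int_{K}\frac{v_j(y)}{|x-y|^{n+s}}\,\frac{x-y}{|x-y|}\,dy.
\]
Since $|x-y|\ge |x|-R\ge |x|/2$ for $y\in K$ and $|x|\ge 2R$, and since $\|v_j\|_{L^1(K)}\le |K|^{1/p'}\|v_j\|_{L^p(\Rn)}$ by H\"older, I obtain
\[
 |D^sv_j(x)|\le\frac{C\,\|v_j\|_{L^p(\Rn)}}{|x|^{n+s}},\qquad |x|\ge 2R,
\]
with $C$ depending only on $n,s,p,R$.

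Finally, since $r\ge 1$ implies $(n+s)r>n$, the tail integral $\int_{|x|>2R}|x|^{-(n+s)r}\,dx$ is finite, so
\[
 \|D^sv_j\|_{L^r(B(0,2R)^c)}\le C'\,\|v_j\|_{L^p(\Rn)}\longrightarrow 0.
\]
Combining the two pieces yields $D^sv_j\to 0$ in $L^r(\Rn)$. The step I expect to be the main (mild) obstacle is the decay estimate on the complement of $B(0,2R)$: one must exploit both the compact support of $v_j$ and the $|x-y|^{-(n+s)}$ kernel to obtain a pointwise bound that is integrable at infinity for every $r\ge 1$. Everything else is a straightforward application of H\"older and the given $H^{s,p}$-convergence.
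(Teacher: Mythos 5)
Your proof is correct, and it takes a genuinely different route from the paper's. You first make explicit the observation (which the paper's reduction ``by linearity assume $u=0$'' implicitly relies on but does not spell out) that $u$ itself must vanish a.e.\ on $K^c$, so that $v_j:=u_j-u$ has support in $K$. You then split $\Rn$ around $B(0,2R)$ with $K\subset B(0,R)$, and on the far field you exploit the fact that, for $v_j$ supported in $K$ and $x$ away from $K$, $D^s v_j(x)$ is an honest integral over $K$, giving the pointwise decay $|D^s v_j(x)|\lesssim |x|^{-(n+s)}\|v_j\|_{L^p}$. Since $(n+s)r>n$ for every $r\ge 1$, this is $L^r$-integrable at infinity, and the proof closes with H\"older on the bounded piece. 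The paper instead works with a $1$-neighbourhood $K_B=K+B(0,1)$, controls $\|D^s u_j\|_{L^1(K_B^c)}$ via Fubini and H\"older by the Gagliardo seminorm $\lVert u_j\rVert_{W^{s/p,p}}$, invokes the embedding $H^{s,p}\hookrightarrow W^{s/p,p}$ from Proposition~\ref{Theorem properties H^{s,p}}\,\emph{\ref{item:Hspfractional})}, and then interpolates between $L^1$ and $L^p$. Your version is more elementary and self-contained: it avoids the fractional Sobolev machinery entirely, uses only the explicit form of the kernel and H\"older, and directly produces the bound $\|D^s v_j\|_{L^r}\lesssim\|v_j\|_{H^{s,p}}$ for each $r$. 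The paper's route has the advantage of slotting naturally into the Bessel/Gagliardo framework it uses throughout, and would adapt more readily to kernels without such clean pointwise decay; but for this specific lemma your argument is cleaner.
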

\begin{proof}
By linearity, we can assume that $u = 0$.
Call $K_B=K+B(0,1)$.
Then
\begin{equation}\label{eq:bounded supports convergence 1}
\left\| D^s u_j \right\|_{L^1(\Rn)} \leq \left\| D^s u_j \right\|_{L^{p}(K_B)} |K_B|^{\frac{1}{p'}} + \left\| D^s u_j \right\|_{L^1(K_B^c)} ,
\end{equation}
where $|K_B|$ denotes the Lebesgue measure of $K_B$, and $p'$ is the conjugate exponent of $p$.

On the other hand, for every $j \in \N$, we use Fubini's Theorem and H\"older's inequality to get
\begin{equation}\label{eq:L1KcB}
\begin{split}
\left\| D^su_j  \right\|_{L^1(K_B^c)}& = |c_{n,s}|\int_{K_B^c} \left| \int_{K} \frac{u_j (x)-u_j (y)}{|x-y|^{n+s}}\frac{x-y}{|x-y|}dy\right|dx \leq |c_{n,s}|
\int_{K} \int_{K_B^c} \frac{|u_j (x)-u_j (y)|}{|x-y|^{n+s}} \, dx \, dy \\
&\leq |c_{n,s}|
\int_{K} \left(\int_{K_B^c} \frac{|u_j (x)-u_j (y)|^p}{|x-y|^{n+s}} \, dx\right)^{\frac{1}{p}} \left(\int_{K_B^c}\frac{1}{|x-y|^{n+s}} \,dx\right)^{\frac{1}{p'}} dy .
\end{split}
\end{equation}
Now, for every $y \in K$ we have $K_B^c - y \subset B(0,1)^c$, so
\[
 \int_{K_B^c} \frac{1}{|x-y|^{n+s}} \,dx = \int_{K_B^c - y} \frac{1}{|z|^{n+s}} \,dz \leq \int_{B(0,1)^c} \frac{1}{|z|^{n+s}} \,dz <\infty .
\]
Now, we will use $C$ to denote a constant (depending on $n$, $s$ and $K$) which can vary through the proof.
So, continuing from \eqref{eq:L1KcB} and applying H\"older's inequality again, we obtain
\[
\left\| D^su_j  \right\|_{L^1(K_B^c)} \leq C \left(\int_{K} \int_{K_B^c} \frac{|u_j (x)-u_j (y)|^p}{|x-y|^{n+s}} \, dx dy\right)^{\frac{1}{p}}  
\leq C\left\| u_j  \right\|_{W^{\frac{s}{p},p}(\Rn)}\leq C\left\| u_j  \right\|_{H^{s,p}(\Rn)},
\]
where we have used Proposition \ref{Theorem properties H^{s,p}}\,\emph{\ref{item:Hspfractional})} in the last step.
This inequality, together with \eqref{eq:bounded supports convergence 1}, leads to
\begin{equation*}
\left\| D^s u_j \right\|_{L^1(\Rn)} \leq C \left\| u_j \right\|_{H^{s,p}(\Rn)} \to 0 ,
\end{equation*}
by assumption.
Finally, through a standard interpolation argument, we obtain the convergence $D^s u_j \to 0$ in $L^r(\Rn)$ for every $r \in [1,p]$.
\end{proof}

Now we introduce a product formula for the $s$-fractional gradient.
We denote by $I$ the identity matrix of dimension $n$.

\begin{lem} \label{Gradiente producto}
Let $0 < s < 1$ and $1 < p < \infty$.
 Let $g \in H^{s,p}(\mathbb{R}^n)$ and $\f \in C_c^1(\mathbb{R}^n)$. 
 Then $\f g \in H^{s,p} (\Rn)$ and for a.e.\ $x \in \mathbb{R}^n$,
 \begin{displaymath}
 D^s(\f g)(x) = \f(x) D^s g(x) + K_{\f} (g I) (x).
 \end{displaymath}
 \end{lem}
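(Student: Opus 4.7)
The plan rests on the pointwise algebraic identity
\[
\f(x)g(x) - \f(y)g(y) = \f(x)\bigl[g(x)-g(y)\bigr] + g(y)\bigl[\f(x)-\f(y)\bigr],
\]
which, when inserted into the principal-value integral defining $D^s(\f g)(x)$, exhibits $\f(x)D^s g(x)$ as the contribution from the first summand (the factor $\f(x)$ is independent of $y$ and comes out of the integral) and $K_{\f}(gI)(x)$ as the contribution from the second, since $(gI)(y)\frac{x-y}{|x-y|} = g(y)\frac{x-y}{|x-y|}$ matches exactly the kernel of $K_\f$ from Lemma \ref{Lema operador lineal}. What remains is to justify splitting the principal value and to verify membership in $H^{s,p}(\Rn)$.

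First I would show that the second summand,
\[
\int \frac{g(y)\bigl[\f(x)-\f(y)\bigr]}{|x-y|^{n+s}}\frac{x-y}{|x-y|}\,dy,
\]
is absolutely convergent at a.e.\ $x$, so that no principal value is needed there. Splitting $\Rn = B(x,1) \cup B(x,1)^c$: on $B(x,1)$ the Lipschitz bound $|\f(x)-\f(y)| \leq L|x-y|$ reduces the integrand to $L|g(y)|/|x-y|^{n+s-1}$, which is $|g|\in L^p$ convolved with a locally integrable kernel and is therefore finite a.e.; on $B(x,1)^c$, $\f$ is bounded and the kernel $|x-y|^{-n-s}$ is integrable uniformly in $x$, while $\f(y)$ vanishes outside $\supp\f$. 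These are exactly the bounds used in the proof of Lemma \ref{Lema operador lineal}.

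With absolute convergence in hand, for each $r>0$ the integral over $B(x,r)^c$ decomposes additively via the identity above; letting $r \to 0$, the first piece tends to $\f(x)D^s g(x)/c_{n,s}$ for a.e.\ $x$ by definition of $D^s g$, while the second piece tends by dominated convergence to its absolutely convergent value over $\Rn$, equal to $K_{\f}(gI)(x)/c_{n,s}$. Multiplying through by $c_{n,s}$ yields the claimed identity a.e.

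To conclude $\f g \in H^{s,p}(\Rn)$ I would check: $\f g \in L^p$ because $\f \in L^{\infty}$ and $g \in L^p$; $\f\, D^s g \in L^p$ for the same reason; and $K_{\f}(gI) \in L^p(\Rn,\R^n)$ by Lemma \ref{Lema operador lineal} applied with $q=p$, $k=n$, $U = gI \in L^p(\Rn,\R^{n\times n})$, and $\f \in C_c^1(\Rn) \subset C^{0,\alpha}(\Rn) \cap C^{0,1}(\Rn)$ of compact support for any $0<\alpha<s$. The point I expect to be most delicate is the splitting of the principal value itself: it genuinely requires the cross term to be absolutely integrable so that its pv equals its Lebesgue integral, while the first piece keeps the principal-value character inherited from $D^s g$; once absolute convergence is secured by the Lipschitz/compact-support estimates above, the remainder of the argument is bookkeeping.
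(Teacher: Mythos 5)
Your proof uses exactly the same algebraic decomposition as the paper, namely inserting $\pm\f(x)g(y)$ so that $D^s(\f g)(x)$ splits into $\f(x)D^s g(x)$ plus the $K_\f(gI)(x)$ term, and it invokes Lemma \ref{Lema operador lineal} for the $L^p$ bound in the same way. The only difference is that you explicitly justify splitting the principal value by checking absolute convergence of the cross term, a point the paper's proof leaves implicit; that extra care is correct and welcome, but the underlying argument is the same.
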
 
 
 \begin{proof}
Clearly $\f g \in L^p (\Rn)$.
Now, for a.e.\ $x \in \Rn$ we have
\begin{align*}
 D^s (\f g) (x) & = c_{n,s} \pv_{x} \int \frac{(\f g)(x) - (\f g)(y)}{|x-y|^{n+s}}\frac{x-y}{|x-y|}dy \\
 &= c_{n,s} \pv_{x} \int \frac{\f(x) g(x) - \f(x) g(y) + \f(x) g(y) - \f(y) g(y)}{|x-y|^{n+s}}\frac{x-y}{|x-y|}dy \\
 &= \f(x) D^s g(x)+ K_{\f} (g I) (x) .
 \end{align*}
The term $\f \, D^s g$ is in $L^p (\Rn, \Rn)$ since $\f \in C^1_c (\Rn)$, while the term $K_{\f} (g I)$ is in $L^p (\Rn, \Rn)$ by Lemma \ref{Lema operador lineal}.
\end{proof}
 
Inspired by \cite{MeS} (see also \cite{ponce_book}) we introduce the $s$-fractional divergence $\div^s$.

\begin{definicion}\label{determinante divergencia}
Let $\phi : \mathbb{R}^n \to \mathbb{R}^n$ be measurable.
Let $0<s<1$ and $x \in \Rn$ be such that
\[
 \int_{B(x, r)^c} \frac{|\phi(x)+\phi(y)|}{|x-y|^{n+s}} dy < \infty 
\]
for each $r>0$.
The $s$-fractional divergence of $\phi$ is defined as
\[
 \diver^s \phi(x):= - c_{n,s} \pv_{x} \int\frac{\phi(x)+\phi(y)}{|x-y|^{n+s}}\cdot\frac{x-y}{|x-y|}dy , 
\]
whenever the principal value exists.
\end{definicion}
Analogously, if $M : \Rn \to \Rnn$ is such that its rows satisfy the assumptions of Definition \ref{determinante divergencia}, we denote by $\Diver^s M$ the column vector-function whose components are the $s$-fractional divergences of each row of $M$.

Similarly to what happened with the $s$-fractional gradient (see \eqref{Alternative gradient def}), by symmetry, we have that
   \begin{equation}\label{eq:alternativediv}
   \diver^s\phi(x) =-c_{n,s}\pv_{x}\int\frac{\phi(y)}{|x-y|^{n+s}} \cdot\frac{x-y}{|x-y|}dy.
   \end{equation}

An initial property of the $s$-fractional divergence is the following, which states that it is well defined if and only if so is the $s$-fractional derivative.
Its proof is analogous to that of \cite[Lemma 2.3]{MeS}, and, hence, it will be omitted.

\begin{lem}\label{le:divgrad}
Let $u: \Rn \to \R$ be measurable and let $x \in \Rn$ be such that $u(x)$ is defined and finite.
Then
\[
 \pv_{x} \int\frac{u(x) + u(y)}{|x-y|^{n+s}} \frac{x_i - y_i}{|x-y|} \, dy
\]
exists and is finite if and only if 
\[
 \pv_x \int \frac{u(x)-u(y)}{|x-y|^{n+s}} \frac{x_i - y_i}{|x-y|}\,dy
\]
exists and is finite.
Moreover, in this case,  
\[
 - c_{n,s} \pv_{x} \int\frac{u(x) + u(y)}{|x-y|^{n+s}} \frac{x_i - y_i}{|x-y|} \, dy = c_{n,s} \pv_x \int \frac{u(x)-u(y)}{|x-y|^{n+s}} \frac{x_i - y_i}{|x-y|}\,dy .
\]
\end{lem}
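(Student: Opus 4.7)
The plan is to reduce the claimed equivalence to a single odd-symmetry calculation for the Riesz kernel. The key algebraic observation is
\[
\frac{u(x)+u(y)}{|x-y|^{n+s}}\frac{x_i-y_i}{|x-y|} = \frac{2u(x)\,(x_i-y_i)}{|x-y|^{n+s+1}} - \frac{u(x)-u(y)}{|x-y|^{n+s}}\frac{x_i-y_i}{|x-y|},
\]
valid for every $y\neq x$, which shows that the two integrands differ by the ``symmetric'' Riesz kernel $\frac{2u(x)(x_i-y_i)}{|x-y|^{n+s+1}}$. The whole argument hinges on showing that this extra term contributes nothing when integrated over any annular complement $B(x,r)^c$.

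The steps I would carry out are the following. First, for fixed $r>0$ the kernel $\frac{x_i-y_i}{|x-y|^{n+s+1}}$ is bounded in absolute value by $|x-y|^{-(n+s)}$, which is integrable on $B(x,r)^c$ (polar coordinates give $\int_r^\infty \rho^{-s-1}\,d\rho = r^{-s}/s$). Hence, after the change of variables $z=y-x$, the vector integral $\int_{B(x,r)^c}\frac{x_i-y_i}{|x-y|^{n+s+1}}\,dy$ becomes $\int_{B(0,r)^c}\frac{-z_i}{|z|^{n+s+1}}\,dz$, which vanishes by odd symmetry. Multiplying by the finite constant $2u(x)$ gives
\[
\int_{B(x,r)^c}\frac{2u(x)(x_i-y_i)}{|x-y|^{n+s+1}}\,dy = 0 .
\]
Second, plugging this into the algebraic identity yields, for every $r>0$ such that either side is a well-defined Lebesgue integral,
\[
\int_{B(x,r)^c}\frac{u(x)+u(y)}{|x-y|^{n+s}}\frac{x_i-y_i}{|x-y|}\,dy = -\int_{B(x,r)^c}\frac{u(x)-u(y)}{|x-y|^{n+s}}\frac{x_i-y_i}{|x-y|}\,dy .
\]
Taking $r\to 0$, the two principal values converge or diverge simultaneously and are opposite in the convergent case; multiplying by $-c_{n,s}$ on the left and $c_{n,s}$ on the right gives the stated equality. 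As a preliminary sanity check I would also verify that the integrability hypotheses required by Definitions \ref{de:Ds} and \ref{determinante divergencia} are equivalent at such an $x$: the triangle inequality together with $\int_{B(x,r)^c}\frac{2|u(x)|}{|x-y|^{n+s}}\,dy<\infty$ (again by polar coordinates, since $u(x)$ is finite) shows that $\int_{B(x,r)^c}\frac{|u(x)-u(y)|}{|x-y|^{n+s}}\,dy$ and $\int_{B(x,r)^c}\frac{|u(x)+u(y)|}{|x-y|^{n+s}}\,dy$ are finite together.

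The only real obstacle is bookkeeping: one must be careful never to split an integral into pieces that are not separately absolutely integrable and never to invoke cancellation before establishing absolute convergence. Once the symmetric Riesz integral is shown to be absolutely convergent on each $B(x,r)^c$, the odd-symmetry argument is clean and the rest of the proof is formal. No deeper tool than the finiteness of $u(x)$ and polar coordinates is required, which is why the result, although fundamental for linking $D^s$ and $\operatorname{div}^s$, is essentially a one-line symmetry computation.
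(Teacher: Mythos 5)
Your argument is correct and is exactly the canonical odd-symmetry computation; the paper does not spell out a proof (it just cites \cite[Lemma 2.3]{MeS}), but your reduction to $\int_{B(x,r)^c}\frac{x_i-y_i}{|x-y|^{n+s+1}}\,dy=0$ for each fixed $r>0$ is the same mechanism the paper itself uses when deriving \eqref{Alternative gradient def} and \eqref{eq:alternativediv}. In particular, your observation that the extra term $\frac{2u(x)(x_i-y_i)}{|x-y|^{n+s+1}}$ is absolutely integrable on each $B(x,r)^c$ (so the splitting is legitimate and the two integrability hypotheses of Definitions \ref{de:Ds} and \ref{determinante divergencia} are equivalent at $x$) is precisely what makes the argument rigorous rather than merely formal.
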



The most important fact relating the $s$-fractional gradient and the $s$-fractional divergence is the integration by parts formula.
The proof of this result follows the lines of \cite[Th.\ 1.4]{MeS}.

\begin{teo} \label{Integracion por partes NL}
Let $0 < s < 1$.
Let $u \in L^1_{\loc} (\mathbb{R}^n)$ be such that
\begin{equation}\label{eq:uxuy}
 \int_K \int \frac{|u(y)-u(x)|}{|x-y|^{n+s}} \, dy \, dx < \infty .
\end{equation}
for every compact $K \subset \Rn$.
Then $D^s u \in L^1_{\loc} (\Rn, \Rn)$ and for all $\phi \in C_{c}^1(\mathbb{R}^n,\mathbb{R}^n)$,
\begin{displaymath}
\int D^su(x) \cdot \phi(x) \, dx = -\int u(x) \diver^s \phi(x) \, dx.
\end{displaymath}
\end{teo}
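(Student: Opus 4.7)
The plan is to show that both sides of the integration by parts formula equal the same symmetric double integral. First, that $D^s u \in L^1_{\loc}(\Rn,\Rn)$ is immediate from Fubini: for any compact $K \subset \Rn$, the hypothesis \eqref{eq:uxuy} implies $\int \frac{|u(x)-u(y)|}{|x-y|^{n+s}}\,dy < \infty$ for a.e.\ $x \in K$, so the principal value in Definition~\ref{de:Ds} is actually an absolutely convergent integral at such $x$, and
\[
\int_K |D^s u(x)|\,dx \le |c_{n,s}|\int_K \int \frac{|u(x)-u(y)|}{|x-y|^{n+s}}\,dy\,dx < \infty.
\]

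Fix $\phi \in C_c^1(\Rn,\Rn)$ with $K := \supp \phi$. The boundedness of $\phi$ combined with \eqref{eq:uxuy} justifies Fubini, yielding
\[
\int D^s u(x)\cdot\phi(x)\,dx = c_{n,s}\iint \frac{(u(x)-u(y))\,\phi(x)\cdot(x-y)}{|x-y|^{n+s+1}}\,dy\,dx.
\]
The change of variables $x \leftrightarrow y$ produces two sign changes—one from $u(x)-u(y)$, one from $x-y$—so an identical integral is obtained with $\phi(y)$ in place of $\phi(x)$. Averaging gives
\[
\int D^s u \cdot \phi\,dx = \mathcal{I} := \frac{c_{n,s}}{2}\iint \frac{(u(x)-u(y))(\phi(x)+\phi(y))\cdot(x-y)}{|x-y|^{n+s+1}}\,dy\,dx,
\]
whose integrand is bounded in modulus by $\frac{|c_{n,s}|}{2}(|\phi(x)|+|\phi(y)|)\,|u(x)-u(y)|/|x-y|^{n+s}$, an $L^1(\Rn\times\Rn)$ function by \eqref{eq:uxuy} and $\supp \phi = K$.

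For the right-hand side the principal value in Definition~\ref{determinante divergencia} forces a truncation. For $\e > 0$ set
\[
A_\e := c_{n,s}\int u(x)\int_{|x-y|>\e}\frac{(\phi(x)+\phi(y))\cdot(x-y)}{|x-y|^{n+s+1}}\,dy\,dx.
\]
Using the radial cancellation $\int_{|x-y|>\e}\phi(x)\cdot(x-y)/|x-y|^{n+s+1}\,dy = 0$, the inner integral becomes $\int_{|x-y|>\e}(\phi(y)-\phi(x))\cdot(x-y)/|x-y|^{n+s+1}\,dy$, which converges pointwise to $-\diver^s \phi(x)/c_{n,s}$ and is uniformly bounded in $\e$ (by a constant on $K+B(0,1)$, and by $C|x|^{-(n+s)}$ outside a large ball, using that $\phi$ has compact support). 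The tail decay combined with the growth control $\int |u(x)|/(1+|x|)^{n+s}\,dx < \infty$—which follows from \eqref{eq:uxuy} by fixing any $x_0 \in K$ with $\int |u(y)-u(x_0)|/|x_0-y|^{n+s}\,dy < \infty$ and applying the triangle inequality—yields an integrable dominant in $x$, so dominated convergence gives $A_\e \to -\int u\diver^s\phi\,dx$. On the other hand, for each fixed $\e>0$ the double integrand defining $A_\e$ lies in $L^1(\Rn\times\Rn)$, so the symmetrization used in the previous paragraph applies verbatim to produce
\[
A_\e = \frac{c_{n,s}}{2}\iint_{|x-y|>\e}\frac{(u(x)-u(y))(\phi(x)+\phi(y))\cdot(x-y)}{|x-y|^{n+s+1}}\,dy\,dx,
\]
and dominated convergence against the same dominant as for $\mathcal I$ gives $A_\e \to \mathcal I$. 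Equating the two limits closes the proof. The main obstacle is the outer dominated convergence in the first limit, which depends on extracting from \eqref{eq:uxuy} the polynomial decay of $u$ at infinity.
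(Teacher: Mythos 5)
Your proof is correct and follows the same core strategy as the paper's: rewrite both sides as double integrals, truncate to the region $|x-y|>\e$ where a symmetrization in $(x,y)$ makes the two sides visibly equal, and then pass to the limit $\e \to 0$. The one genuine value added is that you make explicit a step the paper asserts only implicitly — the passage $\e\to0$ on the $\diver^s\phi$ side is an outer dominated convergence in $x$, and the required dominant only exists because of the polynomial decay $\int |u(x)|(1+|x|)^{-(n+s)}\,dx<\infty$, which you correctly extract from \eqref{eq:uxuy} by fixing a Lebesgue point $x_0$ and applying the triangle inequality. So: same approach, but yours closes a small expository gap in the justification of the limit.
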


\begin{proof}
Assumption \eqref{eq:uxuy} implies that $D^s u$ exists a.e.\ as a Lebesgue integral and $D^s u \in L^1_{\loc} (\Rn, \Rn)$, we have
\begin{equation}\label{eq:parts1}
\int  D^su(x) \cdot \phi(x) \, dx =c_{n,s}  \int \int  \frac{u(x)-u(y)}{|x-y|^{n+s}} \frac{x-y}{|x-y|}  \cdot \phi(x) \, dy\,dx .
\end{equation}
On the other hand, as $\phi \in C_{c}^1(\mathbb{R}^n,\mathbb{R}^n)$, by Lemma \ref{le:divgrad},
\begin{equation}\label{eq:parts2}
-\int u(x) \diver^s\phi(x) dx
=c_{n,s} \int \int u(x) \frac{\phi(x)+\phi(y)}{|x-y|^{n+s}}\cdot\frac{x-y}{|x-y|} \, dy \, dx. 
\end{equation}
Thus, it suffices to establish the equality of the right hand sides of \eqref{eq:parts1} and \eqref{eq:parts2}; in fact, we will establish the equality of the double integrals in the domain $D_{\d} : = \{ (x, y) \in \R^n \times \Rn : |x-y| \geq \d \}$ for each $\d > 0$.
We have
\begin{equation*}
\iint\limits_{D_{\d}} \frac{u(x)-u(y)}{|x-y|^{n+s}} \frac{x-y}{|x-y|} \cdot \phi(x) \, dy\,dx
= 
\iint\limits_{D_{\d}}  \frac{u(x) \, \phi(x)}{|x-y|^{n+s}}\cdot\frac{x-y}{|x-y|} \, dy \, dx - \iint\limits_{D_{\d}}  \frac{u(y) \, \phi(x)}{|x-y|^{n+s}} \cdot\frac{x-y}{|x-y|} \, dy\,dx. 
\end{equation*}
If we interchange now the roles of $x$ and $y$ in the second integral, using the symmetry of $D_{\d}$, we have
\begin{equation*}
 - \iint\limits_{D_{\d}}  \frac{u(y) \, \phi(x)}{|x-y|^{n+s}} \cdot\frac{x-y}{|x-y|} \, dy\,dx = \iint\limits_{D_{\d}}  \frac{u(x) \, \phi(y)}{|x-y|^{n+s}} \cdot \frac{x-y}{|x-y|} \, dy\,dx,
\end{equation*}
and therefore
\[
 \iint\limits_{D_{\d}}  \frac{u(x)-u(y)}{|x-y|^{n+s}} \frac{x-y}{|x-y|} \cdot \phi(x) \, dy\,dx = 
 \iint\limits_{D_{\d}} u(x) \frac{\phi(x)+\phi(y)}{|x-y|^{n+s}} \cdot\frac{x-y}{|x-y|}  \, dy\,dx ,
\]
whence the equality of the right hand sides of \eqref{eq:parts1} and \eqref{eq:parts2} follows.
\end{proof}

As in Lemma \ref{Gradiente producto}, the following result computes the $s$-fractional divergence of a product.
\begin{lem} \label{Divergencia producto}
Let $0 < s < 1$ and $1 < p < \infty$.
Let $g \in H^{s,p}(\mathbb{R}^n,\mathbb{R}^n)$ and $\f \in C_c^1(\mathbb{R}^n)$.
Then $\f g \in H^{s,p}(\mathbb{R}^n,\mathbb{R}^n)$ and for a.e.\ $x \in \mathbb{R}^n$,
\begin{displaymath}
 \diver^s (\f g) (x) = \f(x) \diver^s g(x)+ K_{\f} (g^T) (x) .
 \end{displaymath}
\end{lem}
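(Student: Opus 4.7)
The plan is to mimic closely the argument of Lemma \ref{Gradiente producto}, with the algebraic identity adjusted to the divergence setting. First I would note that $\f g \in H^{s,p}(\Rn, \Rn)$: componentwise, each $\f g_i$ belongs to $H^{s,p}(\Rn)$ by Lemma \ref{Gradiente producto}, so the product itself lies in $H^{s,p}(\Rn, \Rn)$. In particular, $D^s (\f g)$ is defined almost everywhere and belongs to $L^p$; by Lemma \ref{le:divgrad} and the observation that $\diver^s h = \tr D^s h$ (read off from the coordinate formula for the divergence), this guarantees that $\diver^s (\f g)$ is also well defined a.e.\ and lies in $L^p$.

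The second step is purely algebraic: starting from
\[
 \diver^s (\f g)(x) = -c_{n,s}\,\pv_{x} \int \frac{\f(x) g(x) + \f(y) g(y)}{|x-y|^{n+s}} \cdot \frac{x-y}{|x-y|}\,dy,
\]
I would split the numerator as
\[
 \f(x) g(x) + \f(y) g(y) = \f(x)\bigl(g(x) + g(y)\bigr) - \bigl(\f(x)-\f(y)\bigr) g(y).
\]
Substituting back, the first summand factors out $\f(x)$ and reproduces $\f(x) \diver^s g(x)$, while the second yields
\[
 c_{n,s} \int \frac{\f(x)-\f(y)}{|x-y|^{n+s}}\, g(y) \cdot \frac{x-y}{|x-y|}\,dy,
\]
which, upon viewing $g^T$ as a $1\times n$ matrix-valued function, is exactly $K_{\f}(g^T)(x)$ in the notation of Lemma \ref{Lema operador lineal}.

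The final step is the integrability/justification of the splitting. Since $\f \in C^1_c(\Rn)$ is both Lipschitz and bounded, Lemma \ref{Lema operador lineal} applied with $q=p$ and $k=1$ gives $K_{\f}(g^T) \in L^p(\Rn)$; combined with $\f \diver^s g \in L^p(\Rn)$, this certifies that each of the split integrals is absolutely convergent for a.e.\ $x$, so the principal value can indeed be distributed between the two pieces without cancellation issues.

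I do not foresee any real obstacle beyond bookkeeping: the only subtlety is to make sure the decomposition of the principal value into two summands is legitimate, which is handled exactly as in the proof of Lemma \ref{Gradiente producto} by first splitting inside $\int_{B(x,r)^c}$, observing that each piece is absolutely integrable (one piece by the Lipschitz behavior of $\f$ near $x$, the other by the compact support of $\f$), and then passing to the limit $r \to 0^+$.
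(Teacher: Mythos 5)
Your proof is correct and uses exactly the approach the paper intends: since the paper gives no separate proof for this lemma but introduces it with ``As in Lemma~\ref{Gradiente producto},'' the expected argument is the same add-and-subtract decomposition you carry out, namely $\f(x) g(x) + \f(y) g(y) = \f(x)\bigl(g(x)+g(y)\bigr) - \bigl(\f(x)-\f(y)\bigr)g(y)$, after which the first piece factors into $\f(x)\diver^s g(x)$ and the second is recognized as $K_\f(g^T)(x)$ by Lemma~\ref{Lema operador lineal}, with the sign working out because of the leading $-c_{n,s}$ in Definition~\ref{determinante divergencia}. Your observations that $\f g \in H^{s,p}$ follows componentwise from Lemma~\ref{Gradiente producto}, that $\diver^s h = \tr D^s h$ a.e.\ via Lemma~\ref{le:divgrad}, and that the $K_\f$ piece is absolutely convergent (justifying the splitting of the principal value) are all sound and fill in the bookkeeping the paper leaves implicit.
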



\section[Fractional Piola Identity]{Fractional Piola Identity}\label{se:Piola}

In this section we introduce a fractional version of the Piola Identity.
This is the main step in order to prove the existence of  solutions for our fractional energy, since it will allow us to prove the weak continuity in $H^{s,p}$ of the determinant of the $s$-fractional gradient.
Recall that the classical Piola identity asserts that, for smooth enough functions $u : \Omega \subset \Rn \to \Rn$ one has $\Div \cof Du= 0$.
Of course, $\cof$ denotes the cofactor matrix, which satisfies $\cof A \, A^T = (\det A) \, I$ for every $A \in \Rnn$. 

Contrary to the classical case, the proof of the fractional Piola identity is not trivial even for smooth functions. Indeed, the classical proof cannot be reproduced in this case as it relies on Leibniz's rule and symmetry of second derivatives. Notice that Lemma 3.7 prevents all the terms of the second derivatives from being cancelled as happens in the classical case. In the next lines we sketch a possible proof of this identity in order to find out the difficulties. {We emphasize that the next argument is formal in order to illustrate the difficulties for proving the fractional Piola identity.} The main assumption we make is that all integrals involved are absolutely convergent without the need of the principal value, so that we can apply Fubini's theorem. 
Starting from \eqref{Alternative gradient def}, we have
\[
 D^s u (x) = - c_{n,s} \int \frac{u(y)}{|x-y|^{n+s+1}} \otimes (x-y) \, \dd y
\]
For simplicity in the calculations, we set $n=2$, the simplest case.
Then the first row of $\cof D^s u (x)$ is
\[
 - c_{n,s} \left( \int \frac{u_2 (y)}{|x-y|^{n+s+1}} (x_2 - y_2) \, \dd y , \ - \int \frac{u_2(y)}{|x-y|^{n+s+1}} (x_1 - y_1) \, \dd y \right) ,
\]
and, using \eqref{eq:alternativediv}, the first component of $\Div^s \cof D^s u (x)$ is
\begin{align*}
 & c_{n,s}^2 \int \left[ \int \frac{u_2 (z)}{|y-z|^{n+s+1}} (y_2 - z_2) \, \dd z \, (x_1 - y_1) - \int \frac{u_2(z)}{|y-z|^{n+s+1}} (y_1 - z_1) \, \dd z \, (x_2 - y_2) \right] \dd y \\
 & = c_{n,s}^2 \int u_2 (z) \int \frac{\det \left( x-y , y-z \right)}{|y-z|^{n+s+1}} \, \dd y \, \dd z .
\end{align*}
Now, for every $x$ and $z$,
\begin{equation}\label{eq:fakeproof}
{\pv_x }\int \frac{\det \left( x-y , y-z \right)}{|y-z|^{n+s+1}} \, \dd y ={\pv_x}  \int \frac{\det \left( x-y-z , y \right)}{|y|^{n+s+1}} \, \dd y = {\pv_x} \int \frac{\det \left( x-z , y \right)}{|y|^{n+s+1}} \, \dd y = 0
\end{equation}
because of the odd symmetry of the integral. {Notice that the integrals in \eqref{eq:fakeproof} are not defined as Lebesgue integrals.} 
The real proof will consist in making these calculations rigorous for arbitrary dimension $n$. The underlying reason of why the fractional Piola identity is true is that $\det D^su$ is a sort of null Lagrangian in the sense that, for any $n\ge 2$, the integral
\[{\pv} \int\frac{\det(x-a_1,\dots,x-a_n)}{|x-a_1|^{n+s+1}\cdots |x-a_n|^{n+s+1}}\,dx \]
is zero. This is a consequence of the fact that the determinant is an alternating multilinear form, as well as that $\det Du$ is a classical null Lagrangian. However, as we will see in Lemma 4.1, the previous integral is not defined as a proper integral but as a principal value centered at points $a_1,\dots,a_n$, and this will cause technical difficulties in the proof.\\

We start by reviewing a version of the change of variables formula for surface integrals (see, e.g., \cite[Prop.\ 2.7]{MuSp}).
Let $\Gamma$ be an oriented $(n-1)$-dimensional manifold with continuous unit normal field $\nu$.
Let $T : \mathbb{R}^n \to  \mathbb{R}^n$ be affine and injective, with corresponding linear map $\vec T$.
Let $g: \mathbb{R}^n \rightarrow \mathbb{R}^n$ be smooth.
Then
\[
 \int_{\Gamma} g (T x) \cdot \cof \vec T \nu (x) \, dS(x) = \int_{T (\Gamma)} g (x) \cdot \frac{\cof \vec T \nu (T^{-1} x)}{| \cof \vec T \nu (T^{-1} x) |} \, dS(x) ,
\]
where $dS$ denotes the surface element.
Now assume that $T$ is a symmetry across a hyperplane, so $T^{-1} = T$, $\det \vec T = -1$ and $\vec T^{-1} = \vec T = \vec T^T = - \cof \vec T$.
Therefore,
\[
 - \int_{\Gamma} g(Tx) \cdot \vec T \nu(x) \, dS(x) = - \int_{T (\Gamma)} g(x)\cdot \vec T \nu (T x) \, dS(x).
\]
Thus
\[
 \int_{\Gamma} \vec T g(Tx)\cdot \nu(x) \, dS(x)=\int_{T (\Gamma)} \vec T g(x) \cdot \nu(T x) \, dS(x).
\]
As this is true for every $g$, we have that
\begin{equation} \label{eq:changevar sym}
\int_{\Gamma} g(x)\cdot \nu(x) \, dS(x) = \int_{T (\Gamma)} g(T x) \cdot \nu(T x) \, dS(x) ,
\end{equation}
which is the formula we will use in Lemma \ref{Determinante Piola}.

In this and the next sections we will employ the following notation for the submatrices.

\begin{definicion}\label{de:submatrix}
Let $k \in \N$ be with $1 \leq k \leq n$.
Consider indices $1 \leq i_1 < \cdots < i_k \leq n$ and $1 \leq j_1 < \cdots < j_k \leq n$.
\begin{enumerate}[a)]
\item\label{item:M}
We define $M=M_{i_1 , \ldots , i_k; j_1 , \ldots , j_k} : \R^{n \times n} \to \R^{k \times k}$ as the map such that $M(F)$ is the submatrix of $F \in \R^{n \times n}$ formed by the rows $i_1 , \ldots , i_k$ and the columns $j_1 , \ldots , j_k$.

\item
We define $\bar{M} = \bar{M}_{i_1 , \ldots , i_k; j_1 , \ldots , j_k} : \R^{k \times k} \to \Rnn$ as the map such that $\bar{M}(F)$ is the matrix whose rows $i_1 , \ldots , i_k$ and columns $j_1 , \ldots , j_k$ coincide with those of $F$, whereas the rest of the entries are zero.

\item
We define $N = N_{i_1 , \ldots , i_k} : \Rn \to \R^k$ as the map such that $N(v)$ is the subvector of $v \in \Rn$ formed by the entries $i_1 , \ldots , i_k$.

\item
We define $\bar{N} = \bar{N}_{i_1 , \ldots , i_k} : \R^k \to \Rn$ as the map such that $\bar{N} (v)$ is the vector whose entries $i_1 , \ldots , i_k$ coincide with those of $v$, whereas the rest of the entries are zero.

\item
We define $\tilde{N} = \tilde{N}_{i_1 , \ldots , i_k} : \Rn \to \Rn$ as $\bar{N} \circ N$.
\end{enumerate}
\end{definicion}

The following formulas for the determinant will be useful.
Given $A \in \Rnn$, we express it as
\begin{equation*}
 A = \begin{pmatrix} a_1 \\ \vdots \\ a_n \end{pmatrix} ,
\end{equation*}
where $a_1, \ldots, a_n \in \mathbb{R}^n$ are its rows.
Then $\det A = a_i \cdot (\cof A)_i$ for each $i \in \{1, \ldots, n\}$, where $(\cof A)_i$ denotes the $i$-th row of $\cof A$.
Now we realize that if $b \in \mathbb{R}^n$ and
\[
 A' = \begin{pmatrix} a_1 \\ \vdots \\ a_{i-1} \\ b \\ a_{i+1} \\ \vdots \\ a_n \end{pmatrix} ,
\]
then 
\begin{equation}\label{Cof Det}
 \det A' = (\cof A)_i \cdot b .
\end{equation}

The following lemma is the rigorous version of \eqref{eq:fakeproof}.

\begin{lem} \label{Determinante Piola}
Let $k \in \N$ be with $1 \leq k \leq n$.
Consider indices $1 \leq j_1 < \cdots < j_k \leq n$ and let $N = N_{j_1 , \ldots , j_k}$ be the function of Definition \ref{de:submatrix}.
Then there exists a continuous function $G : [0, \infty) \times(\mathbb{R}^n)^{k-1} \to \mathbb{R}$ such that for any $a_1, \ldots, a_k \in \mathbb{R}^n$ and $\epsilon_1, \ldots, \epsilon_k>0$ we have
\begin{equation*}
 \left| \int_{\left(\bigcup_{j=1}^{k} B(a_j, \epsilon_j)\right)^c} \frac{ \det ( N(x-a_{1}), \ldots, N(x-a_{k}))}{|x-a_{1}|^{n+s+1}\cdots|x-a_{k}|^{n+s+1}} dx \right| \leq \frac{\epsilon_1^{1-s}}{\left(\epsilon_2 \cdots \epsilon_k \right)^{n+s+2}}G(\epsilon_1, a_2 - a_1, \ldots, a_k -a_1).
\end{equation*}
\end{lem}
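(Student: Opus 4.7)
The plan is to reduce the integral to a manageable form by a change of variables and the multilinearity of the determinant, then combine integration by parts with a structural (symmetry) observation to extract the $\e_1^{1-s}$ factor. First, substitute $y = x - a_1$ and set $b_j := a_j - a_1$ for $j \geq 2$. Subtracting the first column from each subsequent one turns the numerator into $(-1)^{k-1}\det(N(y), N(b_2), \ldots, N(b_k))$, which is a linear functional of $y$ of the form $(-1)^{k-1}\bar L \cdot y$, where $\bar L \in \Rn$ is supported on $\{j_1, \ldots, j_k\}$, with $|\bar L| \leq C\prod_{j \geq 2}|b_j|$, and crucially $\bar L \cdot b_l = 0$ for each $l = 2, \ldots, k$ (substituting $y = b_l$ creates two identical columns). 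Writing $\psi(y) := \prod_{j \geq 2}|y - b_j|^{-(n+s+1)}$ and $D_\e := \{|y| \geq \e_1\} \cap \bigcap_{j \geq 2}\{|y - b_j| \geq \e_j\}$, the integral reads $I = \pm\int_{D_\e}(\bar L \cdot y)\psi(y)|y|^{-(n+s+1)}\,dy$.

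Next, I would exploit the potential identity $(\bar L \cdot y)/|y|^{n+s+1} = -(n+s-1)^{-1}\bar L \cdot \nabla|y|^{-(n+s-1)}$ and apply the divergence theorem on $D_\e$. This splits $I$ into an interior integral of $|y|^{-(n+s-1)}\bar L \cdot \nabla\psi(y)$ over $D_\e$, a boundary term $I_0$ on $\partial B(0, \e_1)$, and boundary terms on each $\partial B(b_j, \e_j)$ for $j \geq 2$. The critical piece is $I_0 = (n+s-1)^{-1}\e_1^{-(n+s)}\int_{|y|=\e_1}(\bar L \cdot y)\psi(y)\,dS$. Taylor-expand $\psi$ around $0$: the $\psi(0)$ term integrates to zero by oddness of $\bar L \cdot y$ on the sphere; the next term is proportional to $\e_1^{n+1}\bar L \cdot \nabla\psi(0)$, and $\bar L \cdot \nabla\psi(0) = 0$ because $\nabla\psi(0)$ is rotationally equivariant in $(b_2, \ldots, b_k)$, hence lies in their span, while $\bar L$ is orthogonal to each $b_l$. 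The second-order Taylor remainder then yields $|I_0| \lesssim |\bar L|\,\|D^2\psi\|_{B(0,\e_1)}\,\e_1^{2-s}$, which sits well under $\e_1^{1-s}$ times a continuous function of $(\e_1, b_2, \ldots, b_k)$.

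The hard part will be showing that the remaining pieces (the interior integral and the boundary terms on $\partial B(b_j, \e_j)$) combine to a bound compatible with the $\e_1^{1-s}$ rate: each is manifestly independent of $\e_1$ for $\e_1 < \tfrac{1}{2}\min_l|b_l|$, so a naive bound contributes an $O(1)$ remainder that would violate the lemma. The cleanest resolution is a reflection-symmetry argument. Since $\dim\spn(b_2, \ldots, b_k) \leq k-1 \leq n-1$, the orthogonal map $R: \Rn \to \Rn$ fixing $V := \spn(b_2, \ldots, b_k)$ and negating $V^\perp$ is well-defined; it is an isometry with $R(b_l) = b_l$, so it preserves $|y|$, each $|y - b_l|$, and hence the domain $D_\e$. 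Because $\bar L \in V^\perp$, it sends $\bar L \cdot y$ to $-\bar L \cdot y$, so the integrand is odd under $R$. Changing variables $y \mapsto R(y)$ gives $I = -I$, whence $I \equiv 0$, and the bound in the lemma is trivially satisfied.

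To produce the explicit continuous $G$ demanded by the statement without invoking this symmetry, one must directly estimate the interior and $\partial B(b_j, \e_j)$ contributions, controlling $\|\nabla^m\psi\|$-type quantities and the surface integrals by continuous functions of $\min_l|b_l|$, the $b_l$'s, and the $\e_j$'s ($j \geq 2$); all blow-ups as $\e_j \to 0$ are absorbed into the factor $\prod_{j \geq 2}\e_j^{-(n+s+2)}$. The bookkeeping of constants through the Taylor expansions and surface integrals is routine but tedious, and this is where the main technical effort lies.
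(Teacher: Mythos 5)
Your reflection argument is correct and in fact proves the stronger statement that the integral vanishes identically, so the lemma holds trivially with $G \equiv 0$. All the pieces check out: the integral converges absolutely as a Lebesgue integral over $\bigl(\bigcup_j B(a_j,\epsilon_j)\bigr)^c$ (singularities excised, decay $O(|x|^{1-k(n+s+1)})$ at infinity, integrable since $s>0$); $R$ is an orthogonal involution fixing each $b_l$, hence preserving $|y|$, each $|y-b_l|$, and the domain $D_\epsilon$; and $R$ negates $\bar L$ because $\bar L$ is orthogonal to $b_2,\dots,b_k$ while $R$ flips the orthogonal complement of their span (the degenerate case $\bar L=0$ being trivial). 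Given this one-step argument, the divergence-theorem and Taylor-expansion preliminaries in your first two paragraphs are unnecessary, and the obstacle you describe in the third paragraph never arises. Your route is genuinely different from the paper's: the paper uses the classical Piola identity and the divergence theorem to convert the volume integral into a surface integral over $A_1 \subset \partial B(a_1,\epsilon_1)$, then reflects across a hyperplane $T$ through $a_1$ that fixes $a_2,\dots,a_k$, rewrites the surface integral as $\int_{A_1^+} \det(\cdot)\,[f(x)-f(Tx)]\,dS$ with $f(y)=\prod_{j\geq 2}|y-a_j|^{-(n+s+1)}$, and bounds $|f(x)-f(Tx)|$ by a Lipschitz estimate along an arc of length $O(\epsilon_1)$ on the sphere, producing the $\epsilon_1^{1-s}$ factor. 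A closer look shows, however, that since $T$ is an isometry fixing each $a_j$ ($j\geq 2$), one has $|Tx-a_j|=|x-a_j|$ and hence $f(Tx)=f(x)$: the paper's surface integral also vanishes exactly, and the Lipschitz estimate bounds a quantity equal to zero. Your whole-space reflection reaches the vanishing conclusion directly, bypassing the divergence-theorem reduction entirely. The only reason to prefer a quantitative estimate of the paper's type would be the prospective generalization to nonlocal gradients on bounded domains mentioned in the introduction, where the domain's reflection symmetry is lost; for the fractional, whole-space lemma as stated, your argument is both correct and cleaner.
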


\begin{proof}
We can assume that the points $a_1, \ldots, a_k$ do not lie on an affine manifold of dimension $k-2$, since otherwise $\det ( N(x-a_{1}),\ldots, N(x-a_{k})) = 0$ for all $x \in \Rn$.

Define $h: \mathbb{R}^n \setminus \{0\} \to \mathbb{R}$ as
\begin{equation}\label{eq:h}
 h(x)=\frac{-1}{(n+s-1)|x|^{n+s-1}}
\end{equation}
and $h_i : \mathbb{R}^n \setminus \{a_i\} \to \mathbb{R}$ as $h_i (x)= h(x-a_i)$, for each $i=1, \ldots, k$.
Define $H : \mathbb{R}^n \setminus \{a_1, \ldots, a_k\} \to \mathbb{R}^k$ componentwise as $H=(h_1,\ldots,h_k)^T$.
Then
\begin{equation}\label{eq:DH}
 D H (x) = \begin{pmatrix}
 \nabla h_1 (x) \\ \vdots \\ \nabla h_k (x)
 \end{pmatrix}
 = \begin{pmatrix}
  \frac{x-a_1}{|x-a_1|^{n+s+1}} \\ \vdots \\ \frac{x-a_k}{|x-a_k|^{n+s+1}} \end{pmatrix} .
\end{equation}
Call $\vec \jmath = (j_1, \ldots, j_k)$ and denote by $D_{\vec \jmath} H$ the submatrix of $DH$ formed by the columns $j_1, \ldots, j_k$.
Then, for all $x \in \Rn \setminus \{ a_1, \ldots, a_k \}$,
\begin{equation}\label{eq:detDH}
 \det D_{\vec \jmath} H (x) = \frac{\det (N(x-a_{1}), \ldots, N(x-a_{k}))}{|x-a_{1}|^{n+s+1}\cdots|x-a_{k}|^{n+s+1}} .
\end{equation}
As $DH \in L^p \bigl( \bigl( \bigcup_{j=1}^{k} B(a_j, \epsilon_j) \bigr)^c , \Rnn \bigr)$ for all $p \in [1, \infty]$, we have $\det D_{\vec \jmath} H \in L^1 \bigl( \bigl( \bigcup_{j=1}^{k} B(a_j, \epsilon_j) \bigr)^c \bigr)$.
Therefore,
\[
 \int_{\left(\bigcup_{j=1}^{k} B(a_j, \epsilon_j)\right)^c} \det D_{\vec \jmath} H = \lim_{R \to \infty} \int_{B(0, R) \setminus\bigcup_{j=1}^{k} B(a_j, \epsilon_j)} \det D_{\vec \jmath} H .
\]
As $H$ is smooth outside $\bigcup_{j=1}^{k} B(a_j, \epsilon_j)$, we have that 
\[
 \det D_{\vec \jmath} H = \diver \bar{N}(h_1 (\cof D_{\vec \jmath} H))_1) ,
\]
where $(\cof D_{\vec \jmath} H)_1$ indicates the first row of $\cof D_{\vec \jmath} H$, and $\bar{N} = \bar{N}_{j_1 , \ldots , j_k}$ is the function of Definition \ref{de:submatrix}.
Let $R>0$ be big enough so that $\bigcup_{j=1}^{k} \bar{B}(a_j, \epsilon_j) \subset B(0, R)$.
Then, by the divergence theorem,
\begin{equation*}
\int_{B(0, R) \setminus\bigcup_{j=1}^{k} B(a_j, \epsilon_j)} \det D_{\vec \jmath} H = - \int_{\partial\bigcup_{j=1}^{k} B(a_j, \epsilon_j)}  \bar{N}(h_1 (\cof D_{\vec \jmath} H)_1) \cdot \nu_j 
  + \int_{\partial B(0, R)}  \bar{N}(h_1 (\cof D_{\vec \jmath} H)_1) \cdot \nu_R ,
\end{equation*}
where $\nu_j (x)= \frac{x-a_j}{\epsilon_j}$ in $\partial B(a_j, \epsilon_j)$ for $j=1, \ldots, k$, and $\nu_R (x)= \frac{x}{R}$ in $\partial B(0, R)$.
Having in mind the expressions \eqref{eq:h} and \eqref{eq:DH}, we find that, for some constant $C>0$,
\[
 \left| \int_{\partial B(0, R)} \bar{N}(h_1 (\cof D_{\vec \jmath} H)_1) \cdot \nu_R \right| \leq \frac{C}{R^{(n+s)k -1}} ,
\]
which goes to zero as $R \rightarrow \infty$.
Therefore,
\begin{equation}\label{eq:intpartialunion}
 \int_{\left(\bigcup_{j=1}^{k} B(a_j, \epsilon_j)\right)^c} \det D_{\vec \jmath} H = - \int_{\partial\bigcup_{j=1}^{k} B(a_j, \epsilon_j)} \bar{N}(h_1 (\cof D_{\vec \jmath} H)_1) \cdot \nu_j  .
\end{equation}

For each $i=1, \ldots, n$ we set
\[
 A_i=\partial \left( \bigcup_{j=1}^{k} B(a_j, \epsilon_j) \right) \cap \partial B(a_i,\epsilon_i) .
\]
As a consequence of the inclusion $\partial \bigcup_{j=1}^{k} B(a_j, \epsilon_j) \subset \bigcup_{j=1}^{k} \partial B(a_j, \epsilon_j)$, we have that
\[
 \partial \bigcup_{j=1}^{k} B(a_j, \epsilon_j)=\bigcup_{j=1}^{k} A_j .
\]
Moreover, the $(n-1)$-dimensional area of $A_i \cap A_j$ is zero for $1 \leq i < j \leq k$.
Figure \ref{fig:esferas} illustrates this situation when $k=n=3$.

\begin{figure}
 \centering 
 \includegraphics[height=0.25\textwidth]{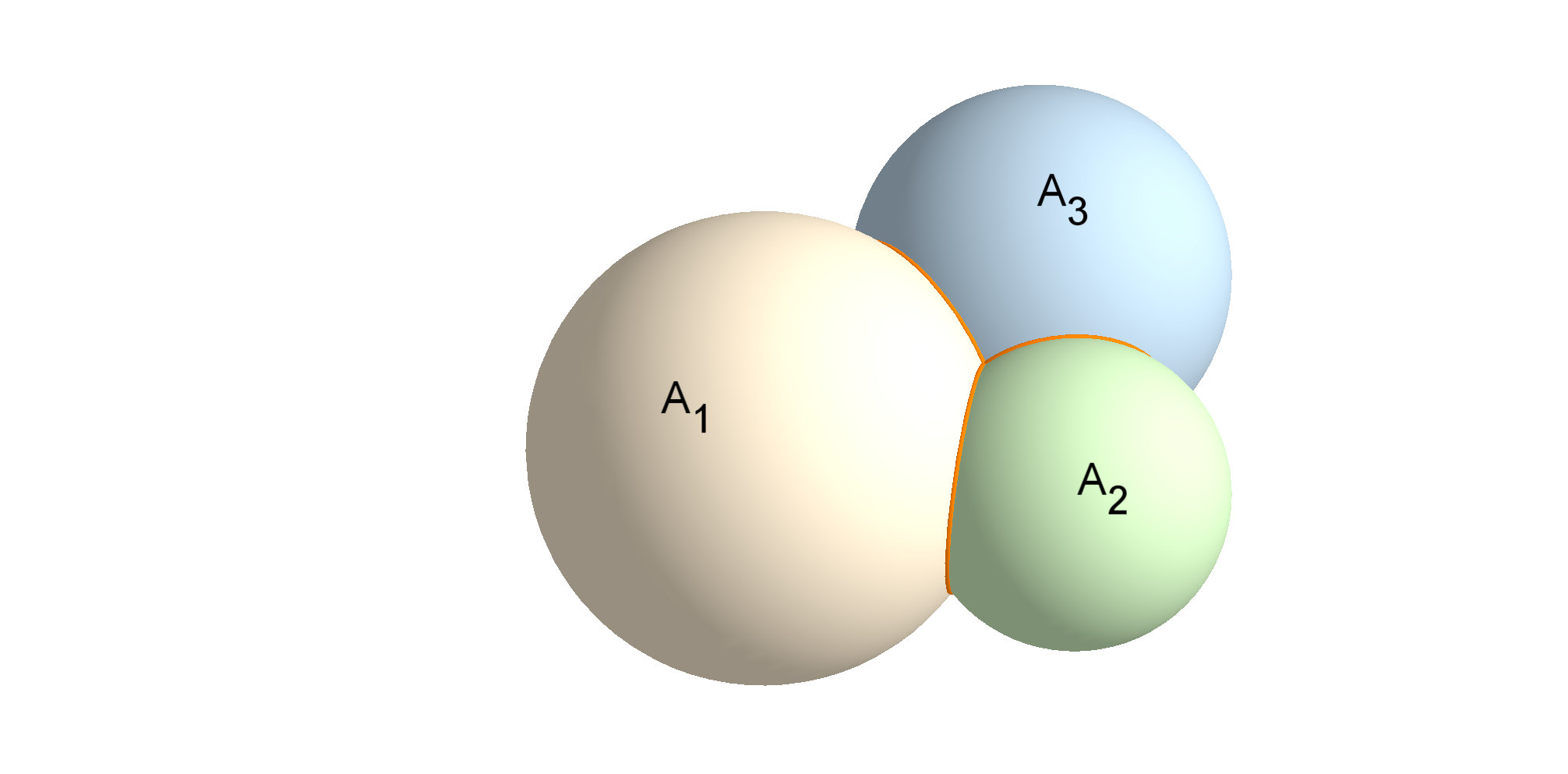}
 \caption{Sets $A_1$, $A_2$, $A_3$ in $\R^3$}
 \label{fig:esferas}
\end{figure}

Next, using \eqref{Cof Det} and \eqref{eq:DH}, we have that for $j=2, \ldots, k$ and $x \in \partial B(a_j, \epsilon_j)$,
\[
 \bar{N}(h_1 (\cof D_{\vec \jmath} H)_1) \cdot \nu_j (x)= \frac{\det ( N(x-a_j), N(x-a_2), \ldots, N(x-a_k))}{| x-a_j | \left| x-a_2 \right|^{n+s+1} \cdots \left| x-a_k \right|^{n+s+1}} = 0 .
\]
As a result, recalling \eqref{eq:intpartialunion} and the inclusion $A_j \subset \partial B(a_j, \epsilon_j)$, we have that
\begin{equation}\label{eq:detDHintA1}
 \int_{\left(\bigcup_{j=1}^{k} B(a_j, \epsilon_j) \right)^c} \det D_{\vec \jmath} H \, dx = - \int_{A_1} \bar{N}(h_1 (\cof D_{\vec \jmath} H)_1) \cdot \nu_1 \, dS .
\end{equation}

Having in mind the expression \eqref{eq:h}, the multilinearity of the determinant and considering \eqref{Cof Det} and \eqref{eq:DH}, we have that, for $x \in A_1$,
\begin{equation}\label{eq:h1cofDhnu1}
\begin{split}
- \bar{N}(h_1 (\cof D_{\vec \jmath} H)_1) \cdot \nu_1 (x) & = \frac{1}{n+s-1} \frac{1}{\epsilon_1^{n+s}} (\cof D_{\vec \jmath} H)_1 \cdot N(x - a_1) \\
&=\frac{1}{n+s-1} \frac{1}{\epsilon_1^{n+s}} \frac{\det (N(x-a_1), N(x-a_2), \ldots, N(x-a_k))}{ \left| x-a_2 \right|^{n+s+1} \cdots \left| x-a_k \right|^{n+s+1}}  \\
&=\frac{1}{n+s-1} \frac{1}{\epsilon_1^{n+s}} \frac{\det (N(x-a_1), N(a_1-a_2), \ldots, N(a_1-a_k))}{ \left| x-a_2 \right|^{n+s+1} \cdots \left| x-a_k \right|^{n+s+1}} \\
&=\frac{1}{n+s-1} \frac{1}{\epsilon_1^{n+s-1}} \frac{(\bar{M}(\cof (N(x-a_1), N(a_1-a_2), \ldots, N(a_1-a_k))))_1}{ \left| x-a_2 \right|^{n+s+1} \cdots \left| x-a_k \right|^{n+s+1}} \cdot \nu_1 (x) ,
\end{split}
\end{equation}
where $\bar{M} = \bar{M}_{i_1 , \ldots , i_k; j_1 , \ldots , j_k}$ is the function of Definition \ref{de:submatrix}.

Let $\Pi_k$ be the only hyperplane in $\R^k$ passing through the points $N(a_1), \ldots, N(a_k)$, and consider one of the two unit normals $\vec{n} \in \R^k$ to $\Pi_k$.
Let $T_k : \R^k \to \R^k$ be the symmetry with respect to $\Pi_k$, so that for every $y \in \R^k$,
\begin{equation} \label{eq:Tk}
 T_k y = y - 2 (y- N(a_1)) \cdot \vec{n} . 
\end{equation}
Let $\vec{m} = \bar{N} (\vec{n})$, and let $\Pi$ be the affine hyperplane in $\Rn$ with normal $\vec{m}$ passing through $a_1$.
Consider $T : \Rn \to \Rn$ as the symmetry across $\Pi$.
Then, for all $x \in \Rn$,
\begin{equation} \label{eq:T}
 T x = x - 2 (x-a_1) \cdot \vec{m} . 
\end{equation}
Let $a_{k+1}, \ldots, a_n \in \Pi$ be such that the points $a_1, \ldots, a_n$ do not lie in an affine manifold of dimension $n-2$.
Define $A_1^\pm=\{ x \in A_1: \pm \det(x-a_1, a_1-a_2, \ldots, a_1-a_n)>0 \}$.
Then $T(A_1^\pm) = A_1^\mp$, and $A_1^+ \cup A_1^-$ cover $A_1$ up to a set of zero $(n-1)$-measure; see Figure \ref{fi:A1A2}.
\begin{figure}
\begin{center}
\begin{tikzpicture}[line cap=round,line join=round,>=triangle 45,x=0.24999409082687604cm,y=0.2499961199996915cm]
\clip(-13.361615466193403,-7.264385389829523) rectangle (17.439112561143627,7.945850673052942);
\draw [shift={(0.,0.)},line width=1.2pt]  plot[domain=0.4636476090008061:5.81953769817878,variable=\t]({1.*4.47213595499958*cos(\t r)+0.*4.47213595499958*sin(\t r)},{0.*4.47213595499958*cos(\t r)+1.*4.47213595499958*sin(\t r)});
\draw [shift={(6.,0.)},line width=0.4pt]  plot[domain=-2.356194490192345:2.356194490192345,variable=\t]({1.*2.8284271247461903*cos(\t r)+0.*2.8284271247461903*sin(\t r)},{0.*2.8284271247461903*cos(\t r)+1.*2.8284271247461903*sin(\t r)});
\draw [shift={(6.,0.)},line width=0.8pt,dotted]  plot[domain=2.356194490192345:3.9269908169872414,variable=\t]({1.*2.8284271247461903*cos(\t r)+0.*2.8284271247461903*sin(\t r)},{0.*2.8284271247461903*cos(\t r)+1.*2.8284271247461903*sin(\t r)});
\draw [shift={(0.,0.)},line width=0.8pt,dotted]  plot[domain=-0.46364760900080615:0.4636476090008061,variable=\t]({1.*4.47213595499958*cos(\t r)+0.*4.47213595499958*sin(\t r)},{0.*4.47213595499958*cos(\t r)+1.*4.47213595499958*sin(\t r)});
\draw [line width=2.pt] (-7.511866666666666,0.)-- (11.984114285714284,0.);
\draw (-7.331843312693561,2.1605287419922905) node[anchor=north west] {$\Pi$};
\draw (-4.670052001689126,6.370504795111545) node[anchor=north west] {$A_1^+$};
\draw (-4.995985631608037,-3.5976320532417856) node[anchor=north west] {$A_1^-$};
\begin{scriptsize}
\draw [fill=black] (0.,0.) circle (1.5pt);
\draw[color=black] (0.015243928395209855,0.6123439998774682) node {$a_1$};
\draw[color=black] (-2.972481012528135,1.780272840420229) node {$A_1$};
\draw [fill=black] (6.,0.) circle (1.5pt);
\draw[color=black] (5.909210402762172,0.66666627153062) node {$a_2$};
\draw[color=black] (7.267267194090965,1.4000169388481674) node {$A_2$};
\end{scriptsize}
\end{tikzpicture}
\end{center}
\caption{Sets $A_1$, $A_2$, $A_1^+$, $A_1^-$ and $\Pi$\label{fi:A1A2}}
\end{figure}
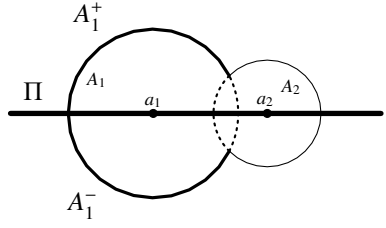
Using the change of variables formula \eqref{eq:changevar sym}, we obtain
\begin{equation}\label{eq:A1-+b}
\begin{split}
 & \int_{A_1^-} \frac{(\bar{M}(\cof (N(x-a_1), N(a_1-a_2), \ldots, N(a_1-a_k))))_1}{ \left| x-a_2 \right|^{n+s+1} \cdots \left| x-a_k \right|^{n+s+1}} \cdot \nu_1 (x) \, d S (x) \\
 & = \int_{A_1^+} \frac{(\bar{M}(\cof (N(Tx-a_1), N(a_1-a_2), \ldots, N(a_1-a_k))))_1}{ \left| Tx-a_2 \right|^{n+s+1} \cdots \left| Tx-a_k \right|^{n+s+1}} \cdot \nu_1(Tx) \, d S (x) .
 \end{split}
\end{equation}
Now, thanks to \eqref{Cof Det}, for $x \in A_1^+$,
\begin{equation}\label{eq:A1-+}
 (\bar{M}(\cof (N(Tx-a_1), N(a_1-a_2), \ldots, N(a_1-a_k))))_1 \cdot \nu_1(Tx)
 = \frac{1}{\epsilon_1} \det (N(Tx-a_1), N(a_1-a_2), \ldots, N(a_1-a_k)) .
\end{equation}
Let $\vec T_k : \R^k \to \R^k$ be the linear map corresponding to the affine map $T_k$, and, analogously, $\vec{T}: \Rn \to \Rn$ the linear map corresponding to $T$.
We notice that $\det \vec T_k = -1$.
Having in mind \eqref{eq:Tk} and \eqref{eq:T}, we find that
\[
 \vec{T}_k y = y - 2 y \cdot \vec{n} , \qquad y \in \R^k 
\]
and
\[
 \vec{T} x = x - 2 x \cdot \vec{m} , \qquad x \in \Rn, 
\]
from which we deduce that $\vec T_k \circ N = N \circ \vec T$.
Thus,
\begin{equation}\label{eq:A1-+c}
\begin{split}
 \det \left( N(Tx - a_1), N(a_1-a_2), \ldots,  N(a_1-a_k) \right) &= \det (N(Tx - Ta_1), N(Ta_1 - Ta_2), \ldots, N(Ta_1 - Ta_k))\\
& = \det (N(\vec{T}(x-a_1)), N(\vec{T}(a_1-a_2)), \ldots, N(\vec{T}(a_1-a_k))) \\
&=\det (\vec{T}_k(N(x-a_1)), \vec{T}_k(N(a_1-a_2)), \ldots , \vec{T}_k (N(a_1-a_k))) \\
&= \det \vec{T}_k \det (N(x-a_1), N(a_1-a_2), \ldots , N(a_1-a_k)) \\
& = -\det (N(x-a_1), N(a_1-a_2), \ldots , N(a_1-a_k)) .
 \end{split}
\end{equation}
Putting together 
 \eqref{eq:A1-+b}, \eqref{eq:A1-+} and \eqref{eq:A1-+c}, we obtain that 
\begin{equation*}
\int_{A_1^-} \frac{\det (N(x-a_1), N(a_1-a_2), \ldots , N(a_1-a_k))}{ \left| x-a_2 \right|^{n+s+1} \cdots \left| x-a_k \right|^{n+s+1}} \, d S (x) 
 =-\int_{A_1^+}\frac{\det (N(x-a_1), N(a_1-a_2), \ldots , N(a_1-a_k))}{ \left| Tx-a_2 \right|^{n+s+1} \cdots \left| Tx-a_k \right|^{n+s+1}} \, d S (x).
\end{equation*}
Consequently, when we define $f : \Rn \setminus \{a_2, \ldots, a_k\} \to \R$ as
\[
 f(y) := \frac{1}{\left( \left| y - a_2 \right| \cdots \left| y -a_k \right| \right) ^{n+s+1}} ,
\]
we have that
\begin{equation}\label{eq:intA1A1+}
\begin{split}
&\int_{A_1} \frac{\det (N(x-a_1), N(a_1-a_2), \ldots , N(a_1-a_k))}{ \left| x-a_2 \right|^{n+s+1} \cdots \left| x-a_k \right|^{n+s+1}}\, d S (x)= \\
&\int_{A_1^+} \det (N(x-a_1), N(a_1-a_2), \ldots , N(a_1-a_k)) \left[ f( x) - f(Tx) \right] d S (x).
\end{split}
\end{equation}

For every $x \in A_1^+$, we join $x$ with $Tx$ by a curve $\g_x$ inside $A_1$, and note that the length of $\g_x$ can be taken to be bounded by $2 \pi \e_1$.
Accordingly, let $\gamma_x:[0,1] \rightarrow A_1$ be of class $C^1$ such that $\gamma_x(0)=x$, $\gamma_x(1)=Tx$ and $|\gamma'_x|$ is constant with $|\gamma'_x|\leq 2 \pi \e_1$.
Then 
\begin{equation}\label{eq:meanvalue}
 |f( x) - f(T x)|=|f(\gamma_x(0))-f(\gamma_x(1))|\leq \int_{0}^{1} |\gamma_x'| \left|\nabla f(\gamma_x(t)) \right|dt  \leq 2 \pi \epsilon_1 \int_{0}^{1}\left|\nabla f(\gamma_x(t)) \right| dt .
\end{equation}
We calculate
\[
 \left| \nabla f(y) \right| = (n+s+1) \left( \left| y - a_2 \right| \cdots \left| y - a_k \right| \right)^{-n-s-2} \sum_{i=2}^k \prod_{\substack{j=2 \\ j\neq i}}^k \left| y - a_j \right| , \qquad y \in \Rn \setminus \{a_2, \ldots, a_k\}.
\]
Now, as $|y-a_j|>\epsilon_j$ for every $y \in A_1$ and $j \in \{ 2,\ldots,k\}$, 
\[
 \left| \nabla f(y) \right| \leq  \frac{n+s+1}{ \left(\epsilon_2 \cdots \epsilon_k \right)^{n+s+2}} \sum_{i=2}^k \prod_{\substack{j=2 \\ j\neq i}}^k \left| y - a_j \right| 
 \leq \frac{n+s+1}{ \left(\epsilon_2 \cdots \epsilon_k \right)^{n+s+2}} \sum_{i=2}^k \prod_{\substack{j=2 \\ j\neq i}}^k (\epsilon_1+\left| a_1 - a_j \right|),
\]
so with \eqref{eq:meanvalue} we obtain that
\begin{equation}\label{eq:fLip}
 |f( x) - f(T x)| \leq 2 \pi \epsilon_1 \frac{n+s+1}{ \left(\epsilon_2 \cdots \epsilon_k \right)^{n+s+2}} \sum_{i=2}^k \prod_{\substack{j=2 \\ j\neq i}}^k (\epsilon_1+\left| a_1 - a_j \right|).
\end{equation}
On the other hand, for all $x \in A_1$,
\begin{equation}\label{eq:detineq}
 \left| \det (N(x-a_1), N(a_1-a_2), \ldots , N(a_1-a_k)) \right| \leq k! \left| x-a_1 \right| \prod_{j=2}^{k} \left| a_1 - a_j \right| = k! \, \epsilon_1 \prod_{j=2}^{k} \left| a_1 - a_j \right| .
\end{equation}
Putting together \eqref{eq:detDH}, \eqref{eq:detDHintA1}, \eqref{eq:h1cofDhnu1}, \eqref{eq:intA1A1+}, \eqref{eq:fLip} and \eqref{eq:detineq}, as well as the fact that the $(n-1)$-dimensional area of $A_1^+$ is bounded by a constant times $\epsilon_1^{n-1}$, we obtain that, for a constant $C>0$ depending on $n$ and $s$,
\[
\left| \int_{\left(\bigcup_{j=1}^{k} B(a_j, \epsilon_j)\right)^c} \frac{ \det (N(x-a_1), N(a_1-a_2), \ldots , N(a_1-a_k))}{|x-a_{1}|^{n+s+1}\cdots|x-a_{k}|^{n+s+1}} dx \right| 
 \leq \frac{C \, \epsilon_1^{1-s}}{\left(\epsilon_2 \cdots \epsilon_k \right)^{n+s+2}} \left( \prod_{j=2}^{k} \left| a_1 - a_j \right| \right) \sum_{i=2}^k \prod_{\substack{j=2 \\ j\neq i}}^k (\epsilon_1+\left| a_1 - a_j \right|) .
\]
The existence of the function $G$ of the statement follows.
\end{proof}

We are in a position to prove the fractional Piola Identity.
Henceforth, $\supp$ denotes the support of a function.

\begin{teo} \label{Piola Identity minors}
Let $k\in \N$ be with $1 \leq k \leq n$.
Consider indices $1 \leq i_1 < \cdots < i_k \leq n$ and $1 \leq j_1 < \cdots < j_k \leq n$ and the functions
\[
 M=M_{i_1 , \ldots , i_k; j_1 , \ldots , j_k} , \quad \bar{M} = \bar{M}_{i_1 , \ldots , i_k; j_1 , \ldots , j_k}
\]
of Definition \ref{de:submatrix}.
Let $u \in C^{\infty}_{c}(\mathbb{R}^n, \Rn)$ and  $s\in (0,1)$.
Then
  \begin{displaymath}
  \Diver^s(\bar{M}(\cof M(D^su)))=0.
  \end{displaymath}
  \end{teo}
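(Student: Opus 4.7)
My plan is to fix $x\in\R^n$ and an index $\alpha\in\{1,\ldots,k\}$ and show that the $i_\alpha$-th component of $\Div^s\bigl(\bar M(\cof M(D^su))\bigr)(x)$ vanishes; this will suffice, since $\bar M(\cof M(D^su))$ has nonzero rows only in positions $i_1,\ldots,i_k$. Let $\psi^\alpha:\R^n\to\R^n$ denote this $i_\alpha$-th row viewed as a vector field, so that $\psi^\alpha_{j_\beta}=(\cof M(D^su))_{\alpha\beta}$ for $\beta=1,\ldots,k$ and all other components vanish. Using the symmetric form \eqref{eq:alternativediv} of the fractional divergence together with the cofactor identity \eqref{Cof Det} yields
\[
\div^s\psi^\alpha(x)=-c_{n,s}\,\pv_x\!\int\frac{\det A(x,y)}{|x-y|^{n+s+1}}\,\dd y,
\]
where $A(x,y)$ is the $k\times k$ matrix obtained from $M(D^su)(y)$ by replacing its $\alpha$-th row with $N(x-y)$.

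The next step is to expand each of the remaining rows of $M(D^su)(y)$ via \eqref{Alternative gradient def}, introducing a dummy variable $w_{\alpha'}\in\R^n$ for every $\alpha'\neq\alpha$. Multilinearity of the determinant then rewrites $\det A(x,y)$ formally as a $(k-1)$-fold integral. Upon the relabeling $a_\alpha:=x$, $a_{\alpha'}:=w_{\alpha'}$ and extraction of an overall sign from the $\alpha$-th row (since $N(x-y)=-N(y-a_\alpha)$), the integrand becomes
\[
(-1)^k c_{n,s}^{k-1}\biggl(\prod_{\alpha'\neq\alpha}\frac{u^{i_{\alpha'}}(w_{\alpha'})}{|y-w_{\alpha'}|^{n+s+1}}\biggr)\det\bigl(N(y-a_1),\ldots,N(y-a_k)\bigr).
\]
Substituting this expression into the formula for $\div^s\psi^\alpha(x)$ and swapping the $y$-integration with the $w_{\alpha'}$-integrations produces an inner $y$-integral of exactly the shape controlled by Lemma \ref{Determinante Piola}, weighted by the compactly supported factor $\prod_{\alpha'\neq\alpha}u^{i_{\alpha'}}(w_{\alpha'})$ and then integrated against it.

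The main obstacle is to make these formal manipulations rigorous, since both $D^su$ and $\div^s$ are defined only as principal values. I would introduce two regularization scales: $\e>0$ truncating the outer $\pv_x$ to $\{|x-y|>\e\}$, and $\d>0$ truncating each inner $\pv_y$ in the expansion of $D^su^{i_{\alpha'}}(y)$ to $\{|y-w_{\alpha'}|>\d\}$. For fixed $(\e,\d)$ all integrands are absolutely convergent so that Fubini applies, and the resulting inner $y$-integral is taken over exactly the set $\bigl(B(x,\e)\cup\bigcup_{\alpha'\neq\alpha}B(w_{\alpha'},\d)\bigr)^c$. Lemma \ref{Determinante Piola}, applied with the distinguished radius $\e_1=\e$, bounds this integral by a constant times $\e^{1-s}/\d^{(k-1)(n+s+2)}\cdot\|G\|_{L^\infty}$. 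Integrating against the compactly supported factors $u^{i_{\alpha'}}(w_{\alpha'})$ preserves this bound, so the regularized quantity $J_\d(\e)$ satisfies $\lim_{\e\to 0}J_\d(\e)=0$ for each fixed $\d>0$.

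The last and most delicate step is to pass from $J_\d(\e)$ to the unregularized quantity $J(\e)=\int_{|x-y|>\e}\det A(x,y)\,|x-y|^{-(n+s+1)}\dd y$ and conclude $\lim_{\e\to 0}J(\e)=0$, which amounts to an interchange of the limits in $\e$ and $\d$. A crude bound $\|D^{s,\d}u-D^su\|_{L^\infty}=O(\d^{1-s})$, valid for $u\in C_c^\infty$, combined with the Lemma-\ref{Determinante Piola} estimate yields $|J(\e)|\lesssim \e^{1-s}\d^{-(k-1)(n+s+2)}+\d^{1-s}\e^{-s}$, which need not vanish as $\e\to 0$ for all $s\in(0,1)$. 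Overcoming this is the real technical content of the theorem, and I expect that it is done either by carrying out the cofactor expansion and applying Lemma \ref{Determinante Piola} before sending $\d\to 0$ (so that both regularizations are absorbed into a single sharp estimate), or by exploiting the additional cancellations coming from the odd symmetry of the determinantal integrand to improve the $\d$-approximation substantially over the naive Lipschitz bound.
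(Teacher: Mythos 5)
Your overall plan---reduce to one row, rewrite the divergence via \eqref{eq:alternativediv} and \eqref{Cof Det} as an integral of a determinant, expand the remaining rows via \eqref{Alternative gradient def}, swap with Fubini, and land on Lemma \ref{Determinante Piola}---is the same as the paper's, and you correctly identify that the difficulty lies in the interchange of truncations. But the obstacle you describe in your last paragraph is not overcome by a sharper $\d$-estimate at the end; it is avoided by two choices you made differently. You collapsed the $k-1$ inner principal-value truncations to a single radius $\d$ and fed the \emph{outer} radius $\e$ into the distinguished ``$\epsilon_1$'' slot of Lemma \ref{Determinante Piola}. The paper instead keeps the inner radii $\e_2,\ldots,\e_k$ \emph{distinct} and places the \emph{innermost} one, $\e_k$, in the distinguished slot; the bound then reads $\e_k^{1-s}/(\e_1\cdots\e_{k-1})^{n+s+2}\,G(\ldots)$, which vanishes as $\e_k\to 0$ with the other radii held fixed. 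Since the iterated limit in \eqref{eq:divf} is evaluated from the inside out (i.e.\ $\e_k\to 0$ first), this is precisely what is needed: the innermost limit already gives zero and the remaining ones are trivial. If you insist on a common inner radius $\d$, the exponent of $\d$ in the bound becomes $1-s-(k-2)(n+s+2)$, which is negative for $k\ge 3$, so no rearrangement of your estimate can close the argument.

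There is a second missing ingredient. To obtain the iterated-limit representation at all, one must justify moving the inner $\pv_{x'}$ limits past the $x'$-integral over $B(x,\e_1)^c$. The paper does this by dominated convergence, and the dominating function comes from an odd-symmetry trick: in each truncated inner integral $\int_{B(x',\e_j)^c}u_{i_\ell}(y)\,N(x'-y)|x'-y|^{-(n+s+1)}\,dy$, replace $u_{i_\ell}(y)$ by $u_{i_\ell}(y)-u_{i_\ell}(x')$ (legitimate by the odd symmetry of the kernel), use the Lipschitz bound for $u$ together with its compact support, and deduce a bound on $f^x_{\e_2,\ldots,\e_k}(x')$ of the form $c/|x'-x|^{n+s}$ uniform in $\e_2,\ldots,\e_k$. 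Your proposal skips this step entirely. Once it is in place, the comparison of $J(\e)$ with $J_\d(\e)$ and the crude $O(\d^{1-s})$ approximation of $D^{s,\d}u$ are not needed; the argument closes purely through the bound of Lemma \ref{Determinante Piola} in the right slot and the iterated-limit order.
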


  \begin{proof}
Let
\[
 N = N_{j_1 , \ldots , j_k} , \quad \bar{N} = \bar{N}_{j_1 , \ldots , j_k}
\]
be the maps of Definition \ref{de:submatrix}.  
Naturally, $\Diver^s(\bar{M}(\cof M(D^su)))=0$ if and only if
\[
 \diver^s \bar{N} ((\cof M(D^su))_{i_{\ell}}) =0 , \qquad \ell = 1, \ldots, k .
\]
We shall show $\diver^s \bar{N}((\cof M(D^su))_{i_1}) =0$.
The rest of the rows would proceed analogously.
  
Using \eqref{eq:alternativediv}, we have that, for a.e.\ $x \in \Rn$,
   \begin{equation}\label{eq:divcof}
     \frac{(-1)^{k-1}}{c_{n,s}^k} \diver^s \bar{N}((\cof M(D^su))_{i_1})(x)=
    \frac{(-1)^{k-1}}{c_{n,s}^{k-1}}\pv_{x}\int \frac{\bar{N} ((\cof M(D^su))_{i_1}) (x')}{|x'-x|^{n+s+1}}\cdot (x'-x) \, dx'.
    \end{equation}
Now, by \eqref{Cof Det} and \eqref{Alternative gradient def}, we have that for a.e.\ $x, x' \in \mathbb{R}^n$,
   \begin{equation}\label{eq:cofdet}
   \begin{split}
   & \frac{(-1)^{k-1}}{c_{n,s}^{k-1}} \frac{\bar{N} ((\cof M(D^su))_{i_1}) (x')}{|x'-x|^{n+s+1}}\cdot (x'-x) = \frac{(-1)^{k-1}}{c_{n,s}^{k-1}} \frac{(\cof M(D^su))_{i_1} (x')}{|x'-x|^{n+s+1}}\cdot N(x'-x) \\
   & =\frac{(-1)^{k-1}}{c_{n,s}^{k-1}}\frac{\det \left( N(x'-x), N(D^s u_{i_2} (x')) , \ldots ,N(D^s u_{i_k}(x')) \right)}{|x'-x|^{n+s+1}} \\ 
   & = \det \left( \frac{N(x'-x)}{|x'-x|^{n+s+1}}, \pv_{x'} \int \frac{u_{i_2}(y_2) N(x'-y_2)}{|x'-y_2|^{n+s+1}} \, dy_2, \ldots , \pv_{x'} \int \frac{u_{i_k}(y_k) N(x'-y_k)}{|x'-y_k|^{n+s+1}} \, dy_k \right) \\
 & = \lim_{\varepsilon_2 \rightarrow 0} \cdots \lim_{\varepsilon_k \rightarrow 0} f^x_{\e_2, \ldots, \e_k} (x') ,
 \end{split}
   \end{equation}
where for each $x\in \Rn$ and $\e_2, \ldots, \e_k > 0$, we have defined $f^x_{\e_2, \ldots, \e_k} : \Rn \to \R$ by
\[
 f^x_{\e_2, \ldots, \e_k} (x') := \det \left( \frac{N(x'-x)}{|x'-x|^{n+s+1}}, \int_{B(x',\varepsilon_2)^c} \frac{u_{i_2}(y_2) N(x'-y_2)}{|x'-y_2|^{n+s+1}} \, dy_2, \ldots , \int_{B(x',\varepsilon_k)^c} \frac{u_{i_k}(y_k) N(x'-y_k)}{|x'-y_k|^{n+s+1}} \, dy_k \right)
\]
and we have used the continuity of the determinant.
Let $\rho >0$ be such that $\supp u \subset B(x',\rho)$ for all $x' \in \supp u$, and fix $\ell \in \{ 2, \ldots, k \}$.
By odd symmetry, we have that 
   \begin{align*}
      \int_{B(x',\varepsilon_j)^c} u_{i_{\ell}} (y_{\ell}) \frac{N(x'-y_{\ell})}{|x'-y_{\ell}|^{n+s+1}} dy_{\ell} & =
       \int_{B(x',\rho)\setminus B(x',\varepsilon_j)} u_{i_{\ell}}(y_{\ell}) \frac{N(x' - y_{\ell})}{|x'-y_{\ell}|^{n+s+1}}dy_{\ell}
       \\
      &
      = \int_{B(x',\rho)\setminus B(x',\varepsilon_j)}\left( u_{i_{\ell}} (y_{\ell}) - u_{i_{\ell}}(x') \right)\frac{N(x' - y_{\ell})}{|x'-y_{\ell}|^{n+s+1}}dy_{\ell},
      \end{align*}
so, using the fact that $u$ is Lipschitz, we have, for some constant $L>0$, that
   \begin{align*}
   \left| \int_{B(x',\varepsilon_j)^c} u_{i_{\ell}}(y_{\ell})\frac{N(x'- y_{\ell})}{|x'-y_{\ell}|^{n+s+1}}dy_{\ell} \right| & \leq \int_{B(x',\rho)}\frac{\left|u_{i_{\ell}} (y_{\ell}) - u_{i_{\ell}} (x') \right|}{|x'-y_{\ell}|^{n+s}}dy_{\ell} \leq L \int_{B(x',\rho)}\frac{1}{|x'-y_{\ell}|^{n+s-1}}dy_{\ell} \\
   & = L\int_{B(0,\rho)}\frac{1}{|y|^{n+s-1}}dy < \infty .
   \end{align*}
This shows that
\[
 \left| f^x_{\e_2, \ldots, \e_k} (x') \right| \leq \frac{c}{|x'-x|^{n+s}}
\]
for some $c >0$ only depending on $u$ and $n$.
As
\[
 \int_{B(x,\e_1)^c} \frac{1}{|x'-x|^{n+s}} d x' < \infty ,
\]
for any $\e_1>0$, we can apply dominated convergence to conclude that
\[
 \int_{B(x,\e_1)^c} \lim_{\varepsilon_2 \rightarrow 0} \cdots \lim_{\varepsilon_k \rightarrow 0} f^x_{\e_2, \ldots, \e_k} (x') \, d x' = \lim_{\varepsilon_2 \rightarrow 0} \cdots \lim_{\varepsilon_k \rightarrow 0} \int_{B(x,\e_1)^c} f^x_{\e_2, \ldots, \e_k} (x') \, d x' .
\]
Recalling \eqref{eq:divcof} and \eqref{eq:cofdet}, with this we obtain that
\begin{equation}\label{eq:divf}
  \frac{(-1)^{k-1}}{c_{n,s}^k} \diver^s\bar{N}((\cof M(D^su))_{i_1}) (x)=  \lim_{\varepsilon_1 \rightarrow 0} \lim_{\varepsilon_2 \rightarrow 0} \cdots \lim_{\varepsilon_k \rightarrow 0} \int_{B(x,\e_1)^c} f^x_{\e_2, \ldots, \e_k} (x') \, d x' .
\end{equation}
Now for every $\e_1, \ldots, \e_k >0$ we define $D_{\e_1, \ldots, \e_k} := B (x, \e_1) \cup \bigcup_{j=2}^k B (y_j, \e_j)$ and have that, thanks to the multilinearity of the determinant, 
  \begin{align*}
 & \int_{B(x,\e_1)^c} f^x_{\e_2, \ldots, \e_k} (x') \, d x' \\
  &= \int_{B(x,\varepsilon_1)^c} \int_{B(x',\varepsilon_2)^c} \cdots \int_{B(x',\varepsilon_k)^c} \frac{ \det \left( N(x'-x), u_{i_2}(y_2) N(x'-y_2) , \ldots , u_{i_k} (y_k) N(x'-y_k) \right)}{|x'-x|^{n+s+1} |x'-y_2|^{n+s+1} \cdots |x'-y_k|^{n+s+1}} \, dy_2 \cdots dy_k \, dx' \\
  & = \int u_{i_k}(y_k) \cdots \int u_{i_2}(y_2) \int_{D_{\e_1, \ldots, \e_k}^c}
    \frac{ \det \left( N(x'-x), N(x'-y_2) , \ldots , N(x'-y_k) \right)}{|x'-x|^{n+s+1} |x'-y_2|^{n+s+1} \cdots |x'-y_k|^{n+s+1}} \, dx' \, dy_2 \cdots dy_k .
  \end{align*}
Set
\[
 g (x, x', y_2, \ldots, y_k) := \frac{\det \left( N(x'-x), N(x'-y_2) , \ldots , N(x'-y_k) \right)}{|x'-x|^{n+s+1} |x'-y_2|^{n+s+1} \cdots |x'-y_k|^{n+s+1}} .
\]
Then,
\begin{equation}\label{eq:fleqg}
 \left| \int_{B(x,\e_1)^c} f^x_{\e_2, \ldots, \e_k} (x') \, d x' \right| \leq \left\| u \right\|_{\infty}^{k-1} \int_{\supp u} \cdots \int_{\supp u} \left| \int_{D_{\e_1, \ldots, \e_k}^c} g (x, x', y_2, \ldots, y_k)  \, dx' \right| dy_2 \cdots dy_k .
\end{equation}
Thanks to Lemma \ref{Determinante Piola},
\begin{equation}\label{eq:intinterna}
 \left | \int_{D_{\e_1, \ldots, \e_k}^c}
    g (x, x', y_2, \ldots, y_k) \, dx' \right| \leq  \frac{\epsilon_k^{1-s}}{\left(\epsilon_1 \cdots \epsilon_{k-1} \right)^{n+s+2}} G(\epsilon_k, x- y_k, y_2 - y_k, \ldots, y_{k-1} - y_k) ,
\end{equation}
where $G$ is the function that appears therein.
Integrating in \eqref{eq:intinterna}, 
we find that
\[
 \int_{\supp u} \cdots \int_{\supp u} \left | \int_{D_{\e_1, \ldots, \e_k}^c} 
    g (x, x', y_2, \ldots, y_k) \, dx' \right| d y_2 \cdots d y_k \leq h(\e_k, x) \, \frac{\epsilon_k^{1-s}}{\left(\epsilon_1 \cdots \epsilon_{k-1} \right)^{n+s+2}} ,
\]
for some continuous function $h : [0, \infty) \times \Rn \to [0, \infty)$.
Consequently,
\[
 \lim_{\varepsilon_k \rightarrow 0} \int_{\supp u} \cdots \int_{\supp u} \left | \int_{D_{\e_1, \ldots, \e_k}^c}
    g (x, x', y_2, \ldots, y_k) \, dx' \right| d y_2 \cdots d y_k = 0 ,
\]
and, in view of \eqref{eq:divf} and \eqref{eq:fleqg}, we obtain that $\diver^s \bar{N}((\cof M(D^su))_{i_1}) (x) =0$.
\end{proof}


\section[Weak continuity of the determinant]{Weak continuity of the determinant}\label{se:weak}

In this section we prove that any minor (determinant of a submatrix) of $D^s u$ is a weakly continuous mapping in $H^{s,p}$.
We start by expressing a nonlocal integration by parts formula for the minors of $D^s u$ that involves the operator $K_{\varphi}$ of Lemma \ref{Lema operador lineal}. 
Recall that for any $F \in \Rnn$ and $1 \leq i \leq n$ we denote by $F_i$ the $i$-th row of $F$.
 
\begin{lem}\label{le:detintegrationparts}
Let $k \in \mathbb{N}$ be with $1\leq k \leq n$.
Consider indices $1 \leq i_1 < \cdots < i_k \leq n$ and $1 \leq j_1 < \cdots < j_k \leq n$ and the functions
\[
 M=M_{i_1 , \ldots , i_k; j_1 , \ldots , j_k} , \quad \bar{M} = \bar{M}_{i_1 , \ldots , i_k; j_1 , \ldots , j_k} , \quad \tilde{N} = \tilde{N}_{i_1 , \ldots , i_k}
\]
of Definition \ref{de:submatrix}.
Let $p \geq k-1$, $q \geq \frac{p}{p-1}$ and $0 < s < 1$.
Let $u \in H^{s,p}(\mathbb{R}^n, \Rn)$ be such that $\cof M(D^s u) \in L^q (\Rn, \R^{k\times k})$.
Then, $\det M(D^s u) \in L^1_{\loc} (\Rn)$, and for every $\varphi \in C_c^{\infty}(\mathbb{R}^n)$ we have that $\bar{N} (u) \cdot K_{\varphi} (\bar{M} (\cof M (D^s u))) \in L^1 (\Rn)$ and
\begin{equation} \label{Lema continuidad debil det}
\int \det M(D^s u) (x) \, \varphi (x) \, dx = - \frac{1}{k} \int \tilde{N} (u) (x) \cdot K_{\varphi} (\bar{M} (\cof M(D^s u))) (x) \, dx .
\end{equation}
\end{lem}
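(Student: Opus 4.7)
My plan is to establish \eqref{Lema continuidad debil det} for smooth compactly supported $u$ as a direct consequence of the fractional Piola identity combined with the integration by parts formula, and then extend to the full generality of the lemma by density. The integrability claims follow first via H\"older's inequality: local integrability of $\det M(D^s u)$ from the Laplace expansion $k\,\det M(F) = \cof M(F):M(F)$ combined with $D^s u \in L^p$, $\cof M(D^s u) \in L^q$ and $\tfrac{1}{p} + \tfrac{1}{q} \leq 1$, while the $L^1$ integrability of the product on the right-hand side comes from the second part of Lemma \ref{Lema operador lineal} applied to the compactly supported $\varphi$, which yields $K_\varphi(\bar M(\cof M(D^s u))) \in L^{p'}(\Rn,\Rn)$ (using $p' \leq q$), and from $\tilde N(u) \in L^p$.

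For $u \in C^\infty_c(\Rn,\Rn)$, I would multiply the pointwise Laplace expansion
\[ k\,\det M(D^s u)(x) = \sum_{\ell=1}^k \bar N((\cof M(D^s u))_\ell)(x) \cdot D^s u_{i_\ell}(x) \]
by $\varphi(x)$, integrate over $\Rn$, and apply Theorem \ref{Integracion por partes NL} to each summand with the $C^1_c$ test field $\phi_\ell := \varphi\,\bar N((\cof M(D^s u))_\ell)$. The product rule of Lemma \ref{Divergencia producto} splits $\diver^s \phi_\ell = \varphi \, \diver^s \bar N((\cof M(D^s u))_\ell) + K_\varphi(\bar N((\cof M(D^s u))_\ell)^T)$, and the first summand vanishes since it is the $\ell$-th component of $\Diver^s \bar M(\cof M(D^s u))$, which equals zero by Theorem \ref{Piola Identity minors}. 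Collecting the surviving $K_\varphi$-terms and observing that $\sum_\ell u_{i_\ell}\,K_\varphi(\bar N((\cof M(D^s u))_\ell)^T) = \tilde N(u) \cdot K_\varphi(\bar M(\cof M(D^s u)))$ delivers \eqref{Lema continuidad debil det} for smooth $u$.

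For general $u$, I would take $u_j \in C^\infty_c(\Rn,\Rn)$ with $u_j \to u$ in $H^{s,p}$ (Proposition \ref{Theorem properties H^{s,p}}), apply the smooth identity to each $u_j$, and pass to the limit. Up to extraction, $D^s u_j \to D^s u$ and $u_j \to u$ a.e., so $\cof M(D^s u_j) \to \cof M(D^s u)$ and $\det M(D^s u_j) \to \det M(D^s u)$ a.e. The main obstacle is to promote these a.e.\ limits to the functional convergence required by the continuity of $K_\varphi$ (Lemma \ref{Lema operador lineal}), concretely $L^{p'}$-convergence of $\cof M(D^s u_j)$ against the strong $L^p$-convergence of $\tilde N(u_j)$. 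When $p \geq k$, the H\"older chain $p/(k-1) \geq p'$ makes this automatic; in the delicate regime $k-1 \leq p < k$, $L^p$-convergence of $D^s u_j$ only produces $L^{p/(k-1)}$-convergence of the degree-$(k-1)$ polynomial $\cof M$, with $p/(k-1) < p'$, and the hypothesis $\cof M(D^s u) \in L^q \subset L^{p'}$ must be exploited---typically through a truncation of the approximants $u_j$---to gain uniform integrability in $L^{p'}$ and close the passage to the limit via Vitali's theorem.
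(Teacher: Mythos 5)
Your smooth-case argument (apply Theorem \ref{Integracion por partes NL} with test field $\phi_\ell := \varphi\,\bar N\bigl((\cof M(D^s u))_\ell\bigr)$, split $\diver^s\phi_\ell$ via Lemma \ref{Divergencia producto}, and annihilate the first summand by Theorem \ref{Piola Identity minors}) is correct and slightly more direct than the paper's for $u\in C^\infty_c$; the paper instead first establishes the intermediate identity $\int D^s\psi\cdot\bigl(\bar M(\cof M(D^s u))\bigr)_i\,dy=0$ and only later substitutes $\psi=\varphi u_{i_\ell}$.

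The genuine gap is in your density step, and you have correctly located but not closed it. In the regime $k-1\leq p<k$, $D^s u_j\to D^s u$ in $L^p$ gives $\cof M(D^s u_j)\to\cof M(D^s u)$ only in $L^{p/(k-1)}$, which is strictly weaker than $L^{p'}$. The hypothesis $\cof M(D^s u)\in L^q$ bounds the \emph{limit}, not the sequence, so it supplies no Vitali/equi-integrability control on $\{\cof M(D^s u_j)\}$ in $L^{p'}$; and truncating the approximants $u_j$ does not help, because $D^s$ is nonlocal and $\cof M$ is a nonlinear polynomial in $D^s u_j$, so a pointwise cut-off on $u_j$ produces no new $L^{p'}$ estimate on the cofactor. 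The paper circumvents this by decoupling the two density passages. It extends the identity $\int D^s\psi\cdot\bigl(\bar M(\cof M(D^s u))\bigr)_i\,dy=0$ first in $u$ while $\psi$ is held fixed in $C^\infty_c$: since $D^s\psi\in L^r$ for every $r\in[1,\infty]$ (Lemma \ref{Lema difference quotient bound}), the mere $L^{p/(k-1)}$ convergence of the cofactor suffices, with no restriction beyond $p\geq k-1$. Only afterwards is the limit taken in $\psi$ (with $u$ now fixed and $\cof M(D^s u)\in L^q$ available to pair with $D^s\psi_j\to D^s\psi$ in $L^{q'}$, via Lemma \ref{le:convergence of bounded supported functions}), and only at the very end is $\psi=\varphi u_{i_\ell}$ inserted. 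This ordering is precisely what makes the borderline exponents $p\in[k-1,k)$ work; your one-shot limit would need to be restructured into this two-step scheme rather than repaired by truncation.
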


\begin{proof}
The fact $\det M (D^s u) \in L^1_{\loc} (\Rn)$ is a consequence of formula \eqref{Cof Det} and H\"older's inequality, since $q \geq \frac{p}{p-1}$.
Moreover, $\tilde{N} (u) \cdot K_{\varphi} (\bar{M} (\cof M(D^s u))) \in L^1 (\Rn)$, since $\tilde{N} (u) \in L^p (\Rn, \Rn)$ and $K_{\varphi} (\bar{M} (\cof M(D^s u))) \in L^r (\Rn, \Rn)$ for all $r \in [1, q]$ thanks to Lemma \ref{Lema operador lineal}.

Assume first $u \in C^{\infty}_c (\Rn, \Rn)$ and let $\psi \in C^{\infty}_c (\Rn)$.
Fix $x \in \Rn$ and $i \in \{i_1, \ldots, i_k\}$.
By Lemma \ref{Divergencia producto} and Theorem \ref{Piola Identity minors},
\[
 \div^s \left( \psi \left( \bar{M} (\cof M(D^s u)) \right)_i \right)(x) = K_{\psi} \left( \left( \bar{M} (\cof M(D^s u)) \right)_i^T \right) (x) .
\]
When we apply Theorem \ref{Integracion por partes NL} to the constant function $1$, we obtain from integration of the previous formula that
\[
 0 = \int \div^s \left( \psi \left( \bar{M} (\cof M(D^s u)) \right)_i \right)(x) \, dx = \int K_{\psi} \left( \left( \bar{M} (\cof M(D^s u)) \right)_i^T \right) (x) \, dx .
\]
By Fubini's theorem and the definitions of $K_{\psi}$ and fractional gradient,
\[
 \int K_{\psi} \left( \left( \bar{M} (\cof M(D^s u)) \right)_i^T \right) (x) \, dx = \int D^s \psi (y) \cdot \left( \bar{M} (\cof M(D^s u))\right)_i (y) \, dy .
\]
We thus have the equality
\begin{equation}\label{eq:distributionalPiola}
 \int D^s \psi (y) \cdot \left( \bar{M} (\cof M(D^s u))\right)_i (y) \, dy = 0 .
\end{equation}
Now we assume that $u \in H^{s,p}(\mathbb{R}^n, \Rn)$ with $\cof M (D^s u) \in L^q (\Rn, \R^{k \times k})$, and, again $\psi \in C^{\infty}_c (\Rn)$.
Taking into account Proposition \ref{Theorem properties H^{s,p}}, let $\{ u_j \}_{j \in \N}$ be a sequence in $C^{\infty}_c (\Rn, \Rn)$ converging to $u$ in $H^{s,p}(\mathbb{R}^n, \Rn)$.
Then $M (D^s u_j) \to M (D^s u)$ in $L^p (\Rn, \R^{k \times k})$ and, hence, $\cof M (D^s u_j) \to \cof M (D^s u)$ in $L^{\frac{p}{k-1}} (\Rn, \R^{k \times k})$, so $\bar{M} (\cof M (D^s u_j)) \to \bar{M} (\cof M (D^s u))$ in $L^{\frac{p}{k-1}} (\Rn, \Rnn)$.
Therefore, \eqref{eq:distributionalPiola} holds as well, since $D^s \psi \in L^r (\Rn)$ for all $r \in [1 , \infty]$ (see Lemma \ref{Lema difference quotient bound}).
Now let $\psi \in H^{s,p} (\Rn)$ be of compact support, and let $\{ \psi_j \}_{j \in \N}$ be a sequence in $C^{\infty}_c (\Rn)$ converging to $\psi$ in $H^{s,p}(\mathbb{R}^n)$ such that $\bigcup_{j \in \N} \supp \psi_j$ is bounded.
Then, by Lemma \ref{le:convergence of bounded supported functions}, $D^s \psi_j \to D^s \psi$ in $L^r (\Rn)$ for all $r \in [1, p]$.
As $\bar{M} (\cof M (D^s u)) \in L^q (\Rn, \Rnn)$, we have that \eqref{eq:distributionalPiola} holds as well.
To sum up, formula \eqref{eq:distributionalPiola} is valid for any $u \in H^{s,p}(\mathbb{R}^n)$ with $\cof M (D^s u) \in L^q (\Rn, \R^{k \times k})$ and any $\psi \in H^{s,p}(\mathbb{R}^n)$ of compact support.

We apply \eqref{eq:distributionalPiola} to $\psi = \varphi u_i$, which is in $H^{s,p}(\mathbb{R}^n)$ thanks to Lemma \ref{Gradiente producto}, and has compact support since so does $\f$.
By the formula for $D^s \psi$ given by Lemma \ref{Gradiente producto}, we obtain that
\begin{equation}\label{eq:0=int}
 0 = \int \varphi(y) \, D^s u_i (y) \cdot \left( \bar{M} (\cof M(D^s u))\right)_i (y) \, dy 
 + \int K_{\psi} (u_i I) (y) \cdot \left( \bar{M} (\cof M(D^s u))\right)_i (y) \, dy .
\end{equation}
Using formula \eqref{Cof Det}, the fact $i \in \{i_1, \ldots, i_k\}$ and elementary properties of the functions of Definition \ref{de:submatrix}, we find that for any $F \in \Rnn$,
\[
 F_i \cdot \left( \bar{M} (\cof M (F)) \right)_i = \det M (F) .
\]
Using this and Fubini's theorem, from \eqref{eq:0=int} we arrive at
\[
 0= \int \varphi(y) \det M (D^s u) (y) \, dy + c_{n,s}\int u_i(x) \int \frac{\varphi(x)-\varphi(y)}{|x-y|^{n+s}} \left( \bar{M} (\cof M(D^s u))\right)_i (y) \cdot \frac{x - y}{|x-y|} \, dy \, dx .
\]
We sum this equality for $i= i_1, \ldots, i_k$ and obtain that
\[
 0= k \int \varphi(y) \det M (D^s u) (y) \, dy + c_{n,s}\int \tilde{N} (u) (x) \cdot \int \frac{\varphi(x)-\varphi(y)}{|x-y|^{n+s}} \left( \bar{M} (\cof M(D^s u))\right) (y) \frac{x - y}{|x-y|} \, dy \, dx ,
\]
which is the required formula.
\end{proof} 
  
Now we establish the closedness and continuity properties of the minors of $D^s u$ in the weak topology of $H^{s,p}$.
In the notation of Definition \ref{de:submatrix}\,\emph{\ref{item:M})}, a minor of order $k$ is a function $\mu : \Rnn \to \R$ such that there exist $1 \leq i_1 < \cdots < i_k \leq n$ and $1 \leq j_1 < \cdots < j_k \leq n$ for which $\mu (F) = \det M(F)$ for all $F \in \Rnn$.
Recall the notation $p^*$ of Theorem \ref{th:PoincareSobolev}, and the affine space $H^{s,p}_g$ of \eqref{eq:Hg}.

\begin{teo}\label{th:wcontdet}
Let $p \geq n-1$ and $0 < s < 1$.
Let $g \in H^{s,p} (\Rn)$ and $u \in H^{s,p}_g (\O, \Rn)$.
Let $\{ u_j \}_{j \in \N}$ be a sequence in $H^{s,p}_g (\O, \Rn)$ such that $u_j \weakc u$ in $H^{s,p} (\Rn, \Rn)$.
Then
\begin{enumerate}[a)]
\item\label{item:wcontdetA} If $k \in\N$ with $1 \leq k \leq n-2$ and $\mu$ is a minor of order $k$ then $\mu (D^s u_j) \weakc \mu (D^s u)$ in $L^{\frac{p}{k}} (\Rn)$ as $j \to \infty$.

\item\label{item:wcontcof} If $\cof D^s u_j \weakc \vartheta$ in $L^q (\Rn, \Rnn)$ for some $q \in [1, \infty)$ and $\vartheta \in L^q (\Rn, \Rnn)$ then $\vartheta = \cof D^s u$.

\item\label{item:wcontdet}
Assume $\det D^s u_j \weakc \t$ in $L^{\ell} (\Rn)$ for some $\ell \in [1, \infty)$ and some $\t \in L^{\ell} (\Rn)$.
If $sp < n$ assume, in addition, that $\cof D^s u_j \weakc \cof D^s u$ in $L^q (\Rn, \Rnn)$ for some $q \in ( \frac{p^*}{p^* -1}, \infty)$.
Then $\t = \det D^s u$.
\end{enumerate}
\end{teo}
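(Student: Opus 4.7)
The plan is to establish the three parts in order by a common inductive scheme based on the nonlocal integration-by-parts identity \eqref{Lema continuidad debil det} of Lemma \ref{le:detintegrationparts}. That formula expresses $\int \det M(D^s u)\,\varphi$ as a pairing of $\tilde{N}(u)$ against $K_\varphi$ applied to the cofactor of the submatrix $M(D^s u)$, whose entries are $(k-1)$-minors of $D^s u$. Hence, once $(k-1)$-minors are known to converge weakly, the weak-to-weak continuity of $K_\varphi$ (Lemma \ref{Lema operador lineal} and the remark following it) together with the compact embedding $H^{s,p}\hookrightarrow L^r$ (Theorem \ref{Bessel embedding theorem}) will transfer this weak convergence to $k$-minors.

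For part \emph{\ref{item:wcontdetA})}, I would argue by induction on $k$ from $1$ to $n-2$. The base case $k=1$ is immediate, since a $1$-minor is just an entry of $D^s u$ and $D^s u_j \weakc D^s u$ in $L^p$ by hypothesis. For the inductive step, fix $\varphi \in C^{\infty}_c(\Rn)$ and an admissible submatrix map $M$, and apply \eqref{Lema continuidad debil det} to $u_j$:
\[
 \int \det M(D^s u_j)\,\varphi\,dx = -\frac{1}{k}\int \tilde{N}(u_j)\cdot K_\varphi\bigl(\bar{M}(\cof M(D^s u_j))\bigr)\,dx.
\]
Each entry of $\cof M(D^s u_j)$ is, up to sign, a $(k-1)$-minor of $D^s u_j$, hence converges weakly in $L^{p/(k-1)}$ to the corresponding entry of $\cof M(D^s u)$ by the induction hypothesis. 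Weak-to-weak continuity of $K_\varphi$ gives convergence of the $K_\varphi(\cdot)$ factor in $L^{p/(k-1)}$, while Theorem \ref{Bessel embedding theorem} together with the complement-value condition yields strong convergence $u_j \to u$ in the conjugate Lebesgue space $L^{p/(p-k+1)}$. Passing to the limit in the right-hand side and combining this with the boundedness of $\det M(D^s u_j)$ in $L^{p/k}$ (and uniqueness of the weak limit) identifies the weak $L^{p/k}$-limit as $\det M(D^s u)$.

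For part \emph{\ref{item:wcontcof})}, I would rerun the same computation with $k=n-1$: the cofactor of an $(n-1)\times(n-1)$ submatrix has entries that are $(n-2)$-minors, already weakly convergent by part \emph{\ref{item:wcontdetA})}, so \eqref{Lema continuidad debil det} pins down the distributional limit of $\cof D^s u_j$ as $\cof D^s u$, forcing $\vartheta = \cof D^s u$. For part \emph{\ref{item:wcontdet})} I would apply \eqref{Lema continuidad debil det} with $k=n$ and $M=\id$ to $u_j$ (possibly after a smooth approximation in $H^{s,p}$ to meet the integrability hypotheses of the lemma), obtaining
\[
 \int \det D^s u_j\,\varphi\,dx = -\frac{1}{n}\int u_j \cdot K_\varphi(\cof D^s u_j)\,dx.
\]
When $sp \ge n$, the sequence $\{\cof D^s u_j\}$ is bounded in $L^{p/(n-1)}$ and by part \emph{\ref{item:wcontcof})} every weak subsequential limit equals $\cof D^s u$, so the full sequence converges weakly; when $sp<n$, this is the standing hypothesis. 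Weak-to-weak continuity of $K_\varphi$ propagates the weak convergence in $L^q$, and $u_j \to u$ strongly in $L^{q'}\subset L^r$ for some $r<p^*$ by Theorem \ref{Bessel embedding theorem}, which is exactly where the bound $q>p^*/(p^*-1)$ is used. Passing to the limit identifies $\t$ with $\det D^s u$.

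The conceptual architecture is entirely classical; the delicate work lies in the Lebesgue-exponent bookkeeping. At each step one must pair a cofactor living in $L^{p/(k-1)}$ against $u_j$ in the conjugate space $L^{p/(p-k+1)}$, and check that this conjugate exponent lies in the range of compactness of Theorem \ref{Bessel embedding theorem}. For $k\le n-1$ the assumption $p\ge n-1$ suffices; for the critical order $k=n$ in part \emph{\ref{item:wcontdet})} only the Sobolev exponent $p^*$ is accessible, and the quantitative condition $q>p^*/(p^*-1)$ in the sub-critical regime is precisely what keeps the pairing inside the range of strong convergence. The only nontrivial technical point beyond this is the routine smooth-approximation argument needed to legitimately invoke Lemma \ref{le:detintegrationparts} in those regimes where the raw integrability $L^{p/(k-1)}$ of $\cof M(D^s u)$ falls short of the hypothesis $q\ge p/(p-1)$ of that lemma.
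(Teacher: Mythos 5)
Your proposal matches the paper's proof in all essentials: the induction on the order of the minor driven by the integration-by-parts identity \eqref{Lema continuidad debil det} of Lemma~\ref{le:detintegrationparts}, the weak-to-weak continuity of $K_\varphi$ from Lemma~\ref{Lema operador lineal}, the strong convergence of $\tilde N(u_j)$ via the compact embedding of Theorem~\ref{Bessel embedding theorem}, and the $sp<n$ versus $sp\geq n$ dichotomy in part~\emph{\ref{item:wcontdet})} (with the $sp\geq n$ case reduced to part~\emph{\ref{item:wcontcof})} by boundedness in $L^{p/(n-1)}$). The only difference is presentational: you pair the factors in the exact Hölder-conjugate spaces $L^{p/(k-1)}$ and $L^{p/(p-k+1)}$, whereas the paper pairs in $L^{p/(p-1)}$ (using the $L^r$-range improvement of $K_\varphi$) against $L^p$; the two choices of exponents are interchangeable and the overall argument is the same.
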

\begin{proof}
We will prove \emph{\ref{item:wcontdetA})} by induction on $k$.
For $k=1$ the result is trivial.
Assume it holds for some $k \leq n-3$ and let us prove it for $k+1$.
Let $\mu$ be a minor of order $k+1$.
In the notation of Definition \ref{de:submatrix}\,\emph{\ref{item:M})}, $\mu (F) = \det M(F)$ for all $F \in \Rnn$, where $M = M_{i_1, \ldots , i_{k+1} ; j_1, \ldots , j_{k+1}}$ for some $1 \leq i_1 < \cdots < i_{k+1} \leq n$ and $1 \leq j_1 < \cdots < j_{k+1} \leq n$.
Let $\f \in C^{\infty}_c (\Rn)$.
By induction assumption, $\cof M(D^s u_j) \weakc \cof M(D^s u)$ in $L^{\frac{p}{k}} (\Rn, \R^{(k+1) \times (k+1)})$ as $j \to \infty$, so 
$\bar{M}(\cof M(D^s u_j)) \weakc \bar{M}(\cof M(D^s u))$ in $L^{\frac{p}{k}} (\Rn, \Rnn)$.
By Lemma \ref{Lema operador lineal}, $K_{\varphi} (\bar{M}(\cof M(D^s u_j))) \weakc K_{\varphi} (\bar{M}(\cof M(D^su)))$ in $L^r (\Rn, \Rn)$ for every $r \in [1, \frac{p}{k}]$.
By Theorem \ref{Bessel embedding theorem}, $\tilde{N}(u_j) \to \tilde{N}(u)$ in $L^p (\Rn)$, so 
\begin{equation}\label{eq:NK}
 \tilde{N}(u_j) \cdot  K_{\varphi} (\bar{M}(\cof M(D^su_j))) \weakc \tilde{N}(u) \cdot  K_{\varphi} (\bar{M}(\cof M(D^su))) \quad \text{in } L^1 (\Rn)
\end{equation}
since $\frac{k}{p} + \frac{1}{p} \leq 1$.
We apply Lemma \ref{le:detintegrationparts} and, in particular, formula \eqref{Lema continuidad debil det} to conclude that
\begin{equation}\label{eq:detM}
 \int \det M(D^s u_j (x)) \, \varphi (x) \, dx \to \int \det M(D^s u(x)) \, \varphi (x) \, dx .
\end{equation}
This shows that $\det M (D^s u_j) \weakc \det M (D^s u)$ in the sense of distributions.
As $\{ \det M (D^s u_j) \}_{j \in \N}$ is bounded in $L^{\frac{p}{k+1}} (\Rn)$ and $p > k+1$, we have that $\det M (D^s u_j) \weakc \det M (D^s u)$ in $L^{\frac{p}{k+1}} (\Rn)$.

The proof of \emph{\ref{item:wcontcof})} follows the lines of \emph{\ref{item:wcontdetA})}.
Let $\mu$ be a minor of order $n-1$.
In the notation of Definition \ref{de:submatrix}\,\emph{\ref{item:M})}, $\mu (F) = \det M(F)$ for all $F \in \Rnn$, where $M = M_{i_1, \ldots , i_{n-1} ; j_1, \ldots , j_{n-1}}$ for some  $1 \leq i_1 < \cdots < i_{n-1} \leq n$ and $1 \leq j_1 < \cdots < j_{n-1} \leq n$.
Let $\f \in C^{\infty}_c (\O)$.
By part \emph{\ref{item:wcontdetA})}, $\cof M(D^s u_j) \weakc \cof M(D^s u)$ in $L^{\frac{p}{n-2}} (\Rn, \R^{(n-1) \times (n-1)})$, so 
$\bar{M}(\cof M(D^s u_j)) \weakc \bar{M}(\cof M(D^s u))$ in $L^{\frac{p}{n-2}} (\Rn, \Rnn)$.
By Lemma \ref{Lema operador lineal}, $K_{\varphi} (\bar{M}(\cof M(D^s u_j))) \weakc K_{\varphi} (\bar{M}(\cof M(D^su)))$ in $L^r (\Rn, \Rn)$ for every $r \in [1, \frac{p}{n-2}]$.
By Theorem \ref{Bessel embedding theorem}, $\tilde{N}(u_j) \to \tilde{N}(u)$ in $L^p (\Rn)$, so convergence \eqref{eq:NK} is also valid since $\frac{n-2}{p} + \frac{1}{p} \leq 1$.
Thanks to \eqref{Lema continuidad debil det}, we conclude that convergence \eqref{eq:detM} holds.
This shows that $\mu (D^s u_j) \weakc \mu (D^s u)$ in the sense of distributions.
As this is true for every minor $\mu$ of order $n-1$, we obtain that $\cof D^s u_j \weakc \cof D^s u$ in the sense of distributions.
Thanks to the assumption, $\vartheta = \cof D^s u$.

We finally show part \emph{\ref{item:wcontdet})}.
Let $\f \in C^{\infty}_c (\O)$.
Assume first $s p<n$.
By the assumption and Lemma \ref{Lema operador lineal}, $K_{\varphi} (\cof D^s u_j) \weakc K_{\varphi} (\cof D^s u)$ in $L^r (\Rn, \Rn)$ for every $r \in [1, q]$.
By Theorem \ref{Bessel embedding theorem}, $u_j \to u$ in $L^t (\Rn)$ for every $t \in [1, p^*)$, so 
\begin{equation}\label{eq:ucofDu}
 u_j \cdot  K_{\varphi} (\cof D^s u_j) \weakc u_j \cdot K_{\varphi} (\cof D^s u) \quad \text{in } L^1 (\Rn)
\end{equation}
since $\frac{1}{q} + \frac{1}{p^*} < 1$.

Assume now $sp \geq n$.
Then $\{ \cof D^s u_j \}_{j \in \N}$ is bounded in $L^{\frac{p}{n-1}} (\Rn, \Rnn)$ so, thanks to part \emph{\ref{item:wcontcof})}, $\cof D^s u_j \weakc \cof D^s u$ in $L^{\frac{p}{n-1}} (\Rn, \Rnn)$.
By Lemma \ref{Lema operador lineal}, $K_{\varphi} (\cof D^s u_j) \weakc K_{\varphi} (\cof D^s u)$ in $L^r (\Rn, \Rn)$ for every $r \in [1, \frac{p}{n-1}]$.
By Theorem \ref{Bessel embedding theorem}, $u_j \to u$ in $L^t (\Rn)$ for every $t \in [1, \infty)$, so convergence \eqref{eq:ucofDu} holds since $p > n-1$.

In either case, we have convergence \eqref{eq:ucofDu}, so by \eqref{Lema continuidad debil det} we obtain
\[
 \int \det D^s u_j (x) \, \varphi (x) \, dx \to \int \det D^s u(x) \, \varphi (x) \, dx .
\]
This shows that $\det D^s u_j \weakc \det D^s u$ in the sense of distributions, so $\t = \det D^s u$.
\end{proof}

\begin{remark} A natural question is whether the weak continuity of the determinant of the fractional gradient may be concluded as a consequence of the weak continuity of the determinant of the classical gradient. Indeed, one can use the properties of the Riesz potential to give a simpler proof in the case $p>n$. To be precise, in \cite[Prop.\ 2.2]{COMI2019} (see also \cite[Th.\ 1.2]{ShS2015}) it is shown that 
\begin{equation} \label{eq:fractionaltoclassicalgradient}
D^su= D (I_{1-s}\ast u)
\end{equation}
for any $u\in C_c^\infty(\R^n, \R^n)$, where $I_{1-s}(x)=\frac{-c_{n,s}}{n+s-1}|x|^{-(n+1-s)}$. Now, writing the determinant as a divergence \cite[Sect.\ 6]{Ball77} and using \eqref{eq:fractionaltoclassicalgradient}, we have that 
\begin{equation}\label{eq:wcformula}
\int \det D^s u \, \varphi\,dx=-\int ( I_{1-s} \ast u)  \cdot \left(\cof D^su \, D\varphi \right) dx,
\end{equation}
for any $u\in C_c^\infty(\Rn, \R^n)$ and any test function $\varphi\in C_c^\infty(\Rn)$.
By density of $C_c^\infty$ in $H^{s,p}$, equality \eqref{eq:wcformula} holds for any $u\in H^{s,p}(\Rn,\Rn)$. Now, taking into account the Hardy--Littlewood--Sobolev embedding \cite[Th.\ 1,\,b)]{Stein70}, and Theorem \ref{th:PoincareSobolev} it is easy to obtain the weak continuity of $\det D^s u$ in $H^{s,p}(\Rn,\Rn)$ for $p>n$. We do not know whether it is possible to extend the previous argument for $p\ge n-1$ without making use of the fractional Piola identity.
\end{remark}


\section[Existence of minimizers]{Existence of minimizers and equilibrium conditions}\label{se:existence}

In this section we prove the existence of minimizers in $H^{s,p}$ of functionals of the form
\begin{equation}\label{eq:I}
 I (u) := \int W (x, u(x), D^s u (x)) \, dx .
\end{equation}
under natural coercivity and polyconvexity assumptions.
We also derive the associated Euler--Lagrange equation, which is a fractional partial differential system of equations.

We recall the concept of polyconvexity (see, e.g, \cite{Ball77,dacorogna}).
Let $\tau$ be the number of submatrices of an $n \times n$ matrix.
We fix a function $\vec \mu : \Rnn \to \R^{\tau}$ such that $\vec \mu (F)$ is the collection of all minors of an $F \in \Rnn$ in a given order.
A function $W_0 : \Rnn \to \R \cup \{\infty\}$ is polyconvex if there exists a convex $\Phi : \R^{\tau} \to \R \cup \{\infty\}$ such that $W_0 (F) = \Phi (\vec \mu (F))$ for all $F \in \Rnn$.

The existence theorem of this paper is as follows.
Its proof relies on a standard argument in the calculus of variations, once we have the continuity (with respect to the weak convergence) of the minors given by Theorem \ref{th:wcontdet}.

\begin{teo}\label{th:existence}
Let $p \geq n-1$ satisfy $p>1$ and $0 < s < 1$.
Let $W : \Rn \times \Rn \times \Rnn \to \R \cup \{ \infty \}$ satisfy the following conditions:
\begin{enumerate}[a)]
\item $W$ is $\mc{L}^n \times \mc{B}^n \times \mc{B}^{n\times n}$-measurable, where $\mc{L}^n$ denotes the Lebesgue sigma-algebra in $\Rn$, whereas $\mc{B}^n$ and $\mc{B}^{n\times n}$ denote the Borel sigma-algebras in $\Rn$ and $\Rnn$, respectively.

\item $W (x, \cdot, \cdot)$ is lower semicontinuous for a.e.\ $x \in \Rn$.

\item For a.e.\ $x \in \Rn$ and every $y \in \Rn$, the function $W (x, y, \cdot)$ is polyconvex.

\item\label{item:Ecoerc}
There exist a constant $c>0$, an $a \in L^1 (\Rn)$ and a Borel function $h : [0, \infty) \to [0, \infty)$ such that
\[
 \lim_{t \to \infty} \frac{h (t)}{t} = \infty
\]
and
\[
 \begin{cases}
 W (x, y, F) \geq a (x) + c \left| F \right|^p + c \left| \cof F \right|^q + h (\left| \det F \right|)  \quad \text{for some } q > \frac{p^*}{p^* -1},  & \text{if } sp < n ,\\
 W (x, y, F) \geq a (x) + c \left| F \right|^p , & \text{if } sp \geq n ,
 \end{cases}
\]
for a.e.\ $x \in \Rn$, all $y \in \Rn$ and all $F \in \Rnn$.
\end{enumerate}
Let $\O$ be a bounded open subset of $\Rn$.
Let $u_0 \in H^{s,p} (\Rn, \Rn)$.
Define $I$ as in \eqref{eq:I}, and assume that $I$ is not identically infinity in $H^{s,p}_{u_0} (\O, \Rn)$.
Then there exists a minimizer of $I$ in $H^{s,p}_{u_0} (\O, \Rn)$.
\end{teo}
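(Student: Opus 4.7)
The plan is to argue by the direct method of the calculus of variations, with Theorem \ref{th:wcontdet} playing the role of the classical weak continuity of the minors. First I would take a minimizing sequence $\{u_j\}_{j\in \N} \subset H^{s,p}_{u_0}(\O,\Rn)$, which exists because $I$ is not identically infinity. The coercivity in \emph{\ref{item:Ecoerc})} together with $a \in L^1(\Rn)$ gives a uniform bound on $\|D^s u_j\|_{L^p(\Rn,\Rnn)}$; applying Theorem \ref{th:PoincareSobolev} componentwise then controls $\|u_j\|_{L^p(\O,\Rn)}$, while $u_j = u_0$ on $\O^c$ handles the remaining part of the $L^p$ norm. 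Hence $\{u_j\}$ is bounded in $H^{s,p}(\Rn,\Rn)$, and by reflexivity (Proposition \ref{Theorem properties H^{s,p}}) I would extract a weakly convergent subsequence, not relabeled, with limit $u$. Theorem \ref{Bessel embedding theorem} then guarantees $u \in H^{s,p}_{u_0}(\O,\Rn)$ together with $u_j \to u$ strongly in $L^p_{\loc}(\Rn,\Rn)$ and, passing to a further subsequence, a.e.

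Next I would identify the weak limits of the minors of $D^s u_j$. When $sp \geq n$, the uniform bound on $\{D^s u_j\}$ in $L^p$ with $p \geq n-1$ immediately yields that every minor of order $k$ is bounded in $L^{p/k}$, and Theorem \ref{th:wcontdet}\,\emph{\ref{item:wcontdetA})}--\emph{\ref{item:wcontdet})} identifies all such weak limits with the minors of $D^s u$. When $sp < n$, the stronger coercivity in \emph{\ref{item:Ecoerc})} bounds $\{\cof D^s u_j\}$ in $L^q$ with $q > p^*/(p^*-1)$, while the superlinear growth of $h$ combined with the de la Vallée Poussin criterion makes $\{\det D^s u_j\}$ equi-integrable and hence weakly sequentially relatively compact in $L^1(\Rn)$ by the Dunford--Pettis theorem. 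Passing to further subsequences, Theorem \ref{th:wcontdet}\,\emph{\ref{item:wcontcof})}--\emph{\ref{item:wcontdet})} identifies the weak limits with $\cof D^s u$ and $\det D^s u$ respectively.

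At this point $\vec\mu(D^s u_j) \weakc \vec\mu(D^s u)$ componentwise in appropriate $L^r$ spaces, while $u_j \to u$ a.e. Writing the polyconvex integrand as $W(x,y,F) = \Phi(x,y,\vec\mu(F))$ with $\Phi(x,y,\cdot)$ convex and $W(x,\cdot,\cdot)$ lower semicontinuous, I would invoke a standard Ioffe--De Giorgi lower semicontinuity theorem for integral functionals convex in the vector variable (see, e.g., \cite{dacorogna}) to conclude
\[
 I(u) \leq \liminf_{j\to\infty} I(u_j) = \inf_{H^{s,p}_{u_0}(\O,\Rn)} I ,
\]
which shows that $u$ is a minimizer.

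The main obstacle is the subcritical regime $sp < n$: there one must simultaneously extract a weakly $L^1$-convergent subsequence of $\{\det D^s u_j\}$ via the equi-integrability produced by the superlinear $h$, and ensure that the $L^q$ bound on $\{\cof D^s u_j\}$ matches precisely the hypothesis $q > p^*/(p^*-1)$ required by Theorem \ref{th:wcontdet}\,\emph{\ref{item:wcontdet})}. This interplay between the coercivity exponents and the embedding constants is what forces the specific form of assumption \emph{\ref{item:Ecoerc})}. Once these compactness ingredients are in place, the remainder follows the classical polyconvex existence scheme, with the fractional Piola identity (through Theorem \ref{th:wcontdet}) substituting for its local counterpart.
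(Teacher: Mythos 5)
Your proof follows essentially the same route as the paper: direct method, coercivity giving a bound on $D^s u_j$ in $L^p$, Poincar\'e and the complement-value condition giving a bound in $H^{s,p}$, reflexivity for weak compactness, Theorem~\ref{Bessel embedding theorem} for identification of the limit in $H^{s,p}_{u_0}$ and strong $L^p$-convergence of $u_j$, and Theorem~\ref{th:wcontdet} to identify the weak limits of the minors, split exactly as you describe into the cases $sp<n$ (where the $L^q$ bound on $\cof D^s u_j$ and equi-integrability of $\det D^s u_j$ via de la Vall\'ee Poussin/Dunford--Pettis are needed) and $sp\geq n$ (where $p>n$ makes $p/(n-1)$ and $p/n$ strictly bigger than one). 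This is precisely the paper's argument.

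One point you gloss over is the passage from lower semicontinuity on a bounded set to lower semicontinuity on all of $\Rn$. The standard polyconvex lower semicontinuity results you would invoke (e.g.\ Ball--Currie--Olver, or Fonseca--Leoni, as cited in the paper) are formulated on bounded domains, and the integrand $W$ is only bounded below by $a\in L^1(\Rn)$, not by $0$. The paper handles this by first applying lower semicontinuity on $B(0,R)$, working with the nonnegative integrand $W-a$, and then letting $R\to\infty$ by monotone convergence to obtain
\[
\int_{\Rn}\bigl(W(x,u,D^s u)-a(x)\bigr)\,dx \leq \liminf_{j\to\infty}\int_{\Rn}\bigl(W(x,u_j,D^s u_j)-a(x)\bigr)\,dx .
\]
This step is short but not vacuous, since you cannot directly cite the bounded-domain theorem on $\Rn$; you should add it to close the argument.
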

\begin{proof}
Assumption \emph{\ref{item:Ecoerc})} shows that the functional $I$ is bounded below by $\int a$.
As $I$ is not identically infinity in $H^{s,p}_{u_0} (\O, \Rn)$, there exists a minimizing sequence $\{ u_j \}_{j \in \N}$ of $I$ in $H^{s,p}_{u_0} (\O, \Rn)$.
Assumption \emph{\ref{item:Ecoerc})} implies that $\{ D^s u_j \}_{j \in \N}$ is bounded in $L^p (\Rn, \Rnn)$.
Thanks to Theorem \ref{th:PoincareSobolev}, $\{ u_j \}_{j \in \N}$ is bounded in $L^p (\O, \Rnn)$.
As $u_j = u_0$ in $\O^c$ for all $j \in \N$, we also have that $\{ u_j \}_{j \in \N}$ is bounded in $L^p (\Rn, \Rn)$, and, consequently, also in $H^{s,p} (\Rn, \Rn)$.
As $H^{s,p} (\Rn, \Rn)$ is reflexive, we can extract a weakly convergent subsequence.
Using Theorem \ref{Bessel embedding theorem}, we obtain that there exists $u \in H^{s,p}_{u_0} (\Rn, \Rn)$ such that for a subsequence (not relabelled),
\begin{equation}\label{eq:convergence1poly}
 u_j \weakc u \text{ in } H^{s,p} (\Rn, \Rn) \quad \text{and} \quad u_j \to u \text{ in } L^p (\Rn, \Rn) .
\end{equation}
Now, by Theorem \ref{th:wcontdet}, for any minor $\mu$ of order $k \leq n-2$, we have that
\begin{equation}\label{eq:convergence2poly}
 \mu (D^s u_j) \weakc \mu (D^s u) \text{ in } L^{\frac{p}{k}} (\Rn) .
\end{equation}

If $sp <n$ then, by assumption \emph{\ref{item:Ecoerc})}, $\{ \cof D^s u_j \}_{j \in \N}$ is bounded in $L^q (\Rn, \Rnn)$, whereas if $sp \geq n$ we call $q:= \frac{p}{n-1}$ and have that $\{ \cof D^s u_j \}_{j \in \N}$ is bounded in $L^q (\Rn, \Rnn)$.
In either case we have that $q>1$, so for a subsequence $\{ \cof D^s u_j \}_{j \in \N}$ converges weakly in $L^q (\Rn, \Rnn)$ and, by Theorem \ref{th:wcontdet},
\begin{equation}\label{eq:convergence3poly}
 \cof D^s u_j \weakc \cof D^s u \text{ in } L^q (\Rn, \Rnn) .
\end{equation}

If $sp <n$ then, by assumption \emph{\ref{item:Ecoerc})} and de la Vall\'ee Poussin's criterion, $\{ \det D^s u_j \}_{j \in \N}$ is equiintegrable, whereas if $sp \geq n$ we have that $\{ \det D^s u_j \}_{j \in \N}$ is bounded in $L^{\frac{p}{n}} (\Rn)$ and $\frac{p}{n} > 1$.
In either case we have that, for a subsequence $\{ \det D^s u_j \}_{j \in \N}$ converges weakly in $L^{\ell} (\Rn)$ with
\[
 \begin{cases}
 \ell = 1 & \text{if } sp <n , \\
 \ell = \frac{p}{n} & \text{if } sp \geq n ,
 \end{cases}
\]
and, hence, by Theorem \ref{th:wcontdet},
\begin{equation}\label{eq:convergence4poly}
 \det D^s u_j \weakc \det D^s u \text{ in } L^{\ell} (\Rn) .
\end{equation}

Convergences \eqref{eq:convergence1poly}--\eqref{eq:convergence4poly} imply, thanks to a standard lower semicontinuity result for polyconvex functionals (see, e.g., \cite[Th.\ 5.4]{BallCurrieOlver} or \cite[Th.\ 7.5]{FoLe07}), that for any $R>0$,
\[
 \int_{B(0, R)} W(x, u(x), D^s u(x)) \, dx \leq \liminf_{j \to \infty} \int_{B(0, R)} W(x, u_j(x), D^s u_j (x)) \, dx .
\]
Therefore,
\begin{align*}
 \int_{B(0, R)} \left( W(x, u(x), D^s u(x)) - a(x) \right) dx & \leq \liminf_{j \to \infty} \int_{B(0, R)}  \left( W(x, u_j(x), D^s u_j (x)) - a(x) \right) dx \\
 & \leq \liminf_{j \to \infty} \int \left( W(x, u_j(x), D^s u_j (x)) - a(x) \right) dx .
\end{align*}
By monotone convergence,
\[
 \int \left( W(x, u(x), D^s u(x)) - a(x) \right) dx \leq \liminf_{j \to \infty} \int \left( W(x, u_j(x), D^s u_j (x)) - a(x) \right) dx ,
\]
so 
\[
 I(u) \leq \liminf_{j \to \infty} I(u_j) .
\]
Therefore, $u$ is a minimizer of $I$ in $H^{s,p}_{u_0} (\O, \Rn)$ and the proof is concluded.
\end{proof}

Comparing Lemmas \ref{le:fracture} and \ref{le:cavitation} with Theorem \ref{th:existence}, we see that functions exhibiting singularities as those shown in those lemmas are compatible with the existence result of Theorem \ref{th:existence}, in opposition to the case of classical elasticity (see, e.g., \cite{Ball77,Ball82,Ball01,Ball02,HeMo10,BaHeMo17}).
Indeed, for a $u\in H^{s,p}(\Rn)$ of compact support and $p>n$, by H\"older's inequality and Lemma \ref{le:convergence of bounded supported functions}, $\cof D^s u\in L^q (\Rn, \Rnn)$ for every $q \in [1, \frac{p}{n-1}]$ and $\det D^s u \in L^r (\Rn)$ for every $r \in [1, \frac{p}{n}]$.
Take now an $s \in (0,1)$ such that $sp < n$, so that this regime is compatible with cavitation (see Lemma \ref{le:cavitation}).
Considering the function $h$ of Theorem \ref{th:existence} as $h(t) := t^{\frac{p}{n}}$, we see that this map $u$ is compatible with the assumptions of Theorem \ref{th:existence} if and only if $\frac{p}{n-1}>\frac{p^*}{p^*-1}$, so $n^2-np<sp$.
To sum up, in the regime
\[
 p > n , \qquad 0 < s < \frac{n}{p}
\]
a typical cavitation map is compatible with the hypothesis of Theorem \ref{th:existence}.
Similarly, if $p > n$ and $n^2 - np < sp < 1$, i.e., in the regime
\[
 p > n , \qquad 0 < s < \frac{1}{p}
\]
the hypothesis of Theorem \ref{th:existence} are compatible with discontinuities along hypersurfaces.

To finish the article, we explore the equilibrium conditions that minimizers of functional \eqref{eq:I} satisfy. This Euler--Lagrange, or equilibrium, conditions constitute a nonlinear system of fractional PDE, and therefore we are providing an existence result for such kind of systems based on polyconvexity.
To be precise, given $g \in H^{s,p} (\Rn, \Rn)$, the boundary value problem reads as
\begin{equation} \label{Euler-Lagrange equations 1}
\begin{cases}
	\diver^s\left( \frac{\partial W}{\partial F}(x, u , D^s u) \right)=\frac{\partial W}{\partial u}(x, u, D^s u) , & \text{in } \O \\
	u = g & \text{in } \O^c .
\end{cases}
\end{equation}
Inspired by Theorem \ref{Integracion por partes NL}, we define a weak solution of \eqref{Euler-Lagrange equations 1} as a $u \in H^{s,p}_g (\O, \R^n)$ satisfying
\begin{equation}\label{eq: Weak Euler-Lagrange}
 \int \left[ \frac{\partial W}{\partial F}(x, u , D^s u) \cdot D^s v + \frac{\partial W}{\partial u}(x, u, D^s u) \cdot v \right] d x = 0
\end{equation}
for all $v\in C^\infty_c(\R^n, \R^n)$ with $v = 0$ in $\O^c$.

The derivation of \eqref{eq: Weak Euler-Lagrange} for a minimizer $u$ is standard.
For this, we make the assumptions \ref{item:AEL}--\ref{item:BEL}) below, which are slightly adapted from \cite[Conditions 3.22 and 3.33]{dacorogna}, although other sets of assumptions are also possible (see, e.g., \cite[Sect.\ 7]{Ball77} or \cite[Sect.\ 3.4.2]{dacorogna}).

\begin{teo}
Let $W : \Rn \times \Rn \times \R^{n\times n}\to \R$ be a function satisfying
\begin{enumerate}[a)]
\item\label{item:AEL} $W (\cdot, u, F)$ is measurable for every $(u, F) \in \R^n \times \Rnn$ and $W (x, \cdot, \cdot)$ is of class $C^1$ for a.e.\ $x \in \Rn$.

\item\label{item:BEL} There exist an $a \in L^1 (\Rn)$, an $\a \in \R$ with
\[
 \begin{cases}
 \a \in [p, p^*] & \text{if } s p < n , \\
 \a \in [p, \infty ) & \text{if } s p \geq n ,
 \end{cases}
\]
and a $c>0$ such that
\[
 \left| W (x, u, F) \right| + \left| \frac{\p W}{\p u} (x, u, F) \right| + \left| \frac{\p W}{\p F} (x, u, F) \right| \leq a (x) + c \left( |u|^{\a} + |F|^p \right) ,
\]
for a.e.\ $x \in \Rn$ and all $(u, F) \in \Rn \times \Rnn$.
\end{enumerate}
Let $g \in H^{s,p} (\Rn, \Rn)$.
Define $I$ as in \eqref{eq:I}, and let $u$ be a minimizer of $I$ in $H_g^{s,p}(\Omega, \Rn)$.
Then $u$ is a weak solution of \eqref{Euler-Lagrange equations 1}.
\end{teo}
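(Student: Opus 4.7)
\begin{prueba}
The plan is to carry out the standard first variation argument: take an admissible perturbation, differentiate the energy at $t=0$, and use the minimality to conclude. The core technical point is justifying the differentiation under the integral via a dominated-convergence argument, which is where assumption \emph{\ref{item:BEL})} together with the embeddings of Section \ref{se:functional} enter.

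First I would fix $v \in C^{\infty}_c(\Rn, \Rn)$ with $v = 0$ in $\O^c$. Since $v$ is smooth with compact support, Lemma \ref{Lema difference quotient bound} gives $v \in H^{s,p}(\Rn, \Rn)$, so $u + t v \in H^{s,p}(\Rn, \Rn)$ for every $t \in \R$, and, because $v$ vanishes in $\O^c$, we have $u + t v = g$ in $\O^c$; hence $u + t v \in H^{s,p}_g(\O, \Rn)$ is an admissible competitor. Define $\varphi : \R \to \R$ by $\varphi(t) := I(u + t v)$. If I can show $\varphi$ is differentiable at $0$ with
\[
 \varphi'(0) = \int \left[ \frac{\p W}{\p u}(x, u, D^s u)\cdot v + \frac{\p W}{\p F}(x, u, D^s u) \cdot D^s v \right] dx ,
\]
then the minimality of $u$ gives $\varphi'(0) = 0$, which is exactly \eqref{eq: Weak Euler-Lagrange}.

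To justify differentiating under the integral, I would use the mean value theorem to write, for $|t| \le 1$ and some $\t = \t(x,t) \in (0,1)$,
\[
 \frac{W(x, u + t v, D^s u + t D^s v) - W(x, u, D^s u)}{t} = \frac{\p W}{\p u}(x, u + \t t v, D^s u + \t t D^s v) \cdot v + \frac{\p W}{\p F}(x, u + \t t v, D^s u + \t t D^s v) \cdot D^s v .
\]
By assumption \emph{\ref{item:BEL})}, the absolute value of the right-hand side is bounded by
\[
 C\bigl( a(x) + |u|^{\a} + |v|^{\a} + |D^s u|^p + |D^s v|^p \bigr) \bigl( |v(x)| + |D^s v(x)| \bigr) .
\]
The key point is that this bound belongs to $L^1(\Rn)$: indeed, $a \in L^1$, $D^s u \in L^p$, and $v, D^s v \in L^{\infty} \cap L^1$ by Lemma \ref{Lema difference quotient bound}; for the $|u|^{\a}$ contribution I would split $\Rn = \O \cup \O^c$ and use Theorem \ref{th:PoincareSobolev} on $\O$ (which applies since $\a$ lies in the admissible range of exponents) together with the fact that $u = g$ on $\O^c$ with $g \in L^{\a}(\Rn)$ (this last step follows by interpolating $L^p \cap L^{p^*}$ in the case $sp<n$, and analogously in the other cases). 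With this uniform $L^1$ dominant I can apply dominated convergence as $t \to 0$ to obtain $\varphi'(0)$ in the desired form.

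The main obstacle is organizing these integrability bounds; in particular, verifying that $|u|^{\a}$ is globally integrable requires combining the interior Poincar\'e-Sobolev estimate of Theorem \ref{th:PoincareSobolev} with the complement-value condition and the embeddings recorded in Proposition \ref{Theorem properties H^{s,p}}. Once this is done, the rest of the argument is the standard first-variation computation and produces \eqref{eq: Weak Euler-Lagrange} directly.
\end{prueba}
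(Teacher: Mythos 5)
Your proposal is correct and follows essentially the same route as the paper: first variation along $u+tv$, the growth bound in \emph{\ref{item:BEL})} to produce an $L^1$ dominant for the difference quotients (you via the mean value theorem directly, the paper via a packaged lemma of Lang), and the Sobolev/Poincar\'e embeddings of Section \ref{se:functional} to show $|u|^\alpha\in L^1(\Rn)$. Your small detour of splitting $\Rn=\O\cup\O^c$ is harmless but unnecessary, since the global embedding already yields $u\in L^\alpha(\Rn)$ directly, which is what the paper uses.
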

\begin{proof}
Let us fix $v\in C^\infty_c(\R^n, \R^n)$ with $v = 0$ in $\O^c$.
As $u + \tau v \in H^{s,p}_g (\O, \Rn)$ for any $\tau \in \R$, it suffices to show that the derivative of $I (u + \tau \, v)$ exists at $\tau =0$ and equals the left hand side of \eqref{eq: Weak Euler-Lagrange}.
Thanks to the dominated convergence theorem, it suffices to show (see, e.g., \cite[Ch.\ 13, \S 2, Lemma 2.2]{Lang83}) that there exists $G \in L^1 (\Rn)$ such that for every $\tau \in \R$ with $|\tau| < 1$ we have
\begin{equation}\label{eq:Iutv}
 I (u + \tau \, v) < \infty
\end{equation}
and
\begin{equation}\label{eq:Iutv2}
 \left| \frac{d}{d\tau} W(x, u(x) + \tau v(x), D^s u(x) + \tau D^s v(x)) \right|\leq G(x) , \qquad \text{a.e.\ } x \in \Rn .
\end{equation}
Let us check condition \eqref{eq:Iutv}.
Thanks to \ref{item:BEL}) , 
\[
 I (u + \tau \, v) \leq \int a + C \int \left( |u|^{\a} + |D^s u|^p + |v|^{\a} + |D^s v|^p \right)
\]
for some constant $C>0$.
Clearly, the integral of $|v|^{\a}$ is finite since $v\in C^\infty_c(\R^n, \R^n)$, and so is the integral of $|D^s v|^p$ due to Lemma \ref{Lema difference quotient bound}.
In addition, the integral of $|D^s u|^p$ is finite because $u \in H^{s,p} (\Rn, \Rn)$.
Now, by Theorem \ref{th:PoincareSobolev} and the interpolation (or H\"older) inequality, $u \in L^r (\Rn, \Rn)$ for all $r \in [p, p^*]$ if $s p < n$, and for all $r \in [p, \infty)$ if $s p \geq n$.
Therefore, $|u|^{\a} \in L^1 (\Rn)$.
Condition \eqref{eq:Iutv} is thus satisfied.

We now show condition \eqref{eq:Iutv2}.
We have, for $|\tau| < 1$ and a.e.\ $x \in \O$,
\begin{align*}
 & \left| \frac{d}{d\tau} W(x, u(x) + \tau v(x), D^s u(x) + \tau D^s v(x)) \right| \\
 & \qquad \leq \left| \frac{\p W}{\p u} (x, u(x) + \tau v(x), D^s u(x) + \tau D^s v(x)) \right| \left\| v \right\|_{L^{\infty} (\Rn)} + \left| \frac{\p W}{\p F} (x, u(x) + \tau v(x), D^s u(x) + \tau D^s v(x)) \right| \left\| D^s v \right\|_{L^{\infty} (\Rn)} ,
\end{align*}
where we have used Lemma \ref{Lema difference quotient bound} to show that $D^s v \in L^{\infty} (\Rn)$.
Now, by \ref{item:BEL}),
\begin{equation}\label{eq:pWuF}
\begin{split}
 & \left| \frac{\p W}{\p u} (x, u(x) + \tau v(x), D^s u(x) + \tau D^s v(x)) \right| + \left| \frac{\p W}{\p F} (x, u(x) + \tau v(x), D^s u(x) + \tau D^s v(x)) \right| \\
 & \qquad \leq a (x) + C \left( |u (x)|^{\a} + |v (x)|^{\a} + |D^s u (x)|^p + |D^s v (x)|^p \right) ,
\end{split}
\end{equation}
for some constant $C>0$.
As before, the right hand side of \eqref{eq:pWuF} is in $L^1 (\Rn)$, so condition \eqref{eq:Iutv2} is proved.
\end{proof} 

\section*{Acknowledgements} 

This work has been supported by the Spanish {\it Ministerio de Ciencia, Innovaci\'on y Universidades} through projects MTM2017-83740-P (J.C.B. and J.C.), and MTM2017-85934-C3-2-P (C.M.-C.).

\section*{References}

\end{document}